\documentclass[11pt]{article}
\usepackage{amsmath,amsthm,amssymb,color}
\usepackage{hyperref}
\usepackage{fullpage}

\usepackage{tikz}

\newtheorem{thm}{Theorem}[section]
\newtheorem{claim}[thm]{Claim}
\newtheorem{lem}[thm]{Lemma}
\newtheorem{define}[thm]{Definition}

\newtheorem{obs}[thm]{Observation}

\newtheorem{prop}[thm]{Proposition}

\newtheorem*{boundedmaximumdegreerestatement}{Theorem~\ref{thm:main-bounded-degree}}
\newtheorem*{boundedaveragedegreerestatement}{Theorem~\ref{thm:main-sparse}}
\newtheorem*{fixedexcludedminorrestatement}{Theorem~\ref{thm:main-fixed-minor}}
\newtheorem*{randomoutrestatement}{Theorem~\ref{thm:main-random-tradeoff}}

\let\textunderscorechar\_

\def\R{{\mathbb{R}}}

\def\P{{\mathbb{P}}}

\def\cT{{\cal T}}

\def\u{{\mathcal{u}}}
\def\v{{\mathcal{v}}}

\def\_{\,\,\,\,\,}

\def\supp{\textsf{supp}}

\newcommand{\eps}{\epsilon}

\newcommand{\remove}[1]{}

\def\RP{{\mathbb{RP}}}
\DeclareMathOperator{\aff}{aff}
\DeclareMathOperator{\affdim}{affdim}

\DeclareMathOperator{\SGdim}{SG\text{-}dim}
\DeclareMathOperator{\Ord}{Ord}

\begin{document}

\title{The Sylvester--Gallai dimension of graphs}
\author{Zeev Dvir\thanks{Department of Mathematics  and Department of Computer Science,
Princeton University.
Email: \texttt{zdvir@princeton.edu}.}}

\date{}
\maketitle

\begin{abstract}
For an undirected graph $G$, its \emph{Sylvester--Gallai dimension} $\SGdim(G)$ is the largest affine dimension of a configuration of distinct real points indexed by $V(G)$ in which every line determined by an edge of $G$ is a special line (contains at least three points). Hence, the classical Sylvester--Gallai theorem can be stated as $\SGdim(K_n)=1$ for the complete graph $K_n$. We initiate the systematic study of this new graph parameter and prove lower bounds for certain graph families (bounded degree, sparse, minor-free) as well as an upper bound for random graphs.
\end{abstract}

\section{Introduction}\label{sec:introduction}

Let $P=\{p_1,\ldots,p_n\}$ be a set of $n\geq 3$ distinct points in $\R^d$. We write $\aff(P)$ for the affine span of $P$ and $\affdim(P)$ for the dimension of the affine span. A line determined by two points of $P$ is \emph{special} if it contains at least three points of $P$; otherwise it is called an \emph{ordinary} line. The classical Sylvester--Gallai theorem says that, unless all points of $P$ sit on a single line, there must be at least one ordinary line spanned by $P$. This can be restated as follows: if, for all pairs $1\leq i<j\leq n$, the line through $p_i,p_j$ is special, then the affine span of $P$ must be one-dimensional. In this paper we suggest studying the more general question in which we only know that a \emph{subset} of the pairs $i<j$ (modeled as a graph) determine special lines and we want to understand whether this forces $P$ to have small affine dimension. This motivates the following definition.

\begin{define}[Sylvester--Gallai dimension]
\label{def:sg-dimension}
Let $G=(V,E)$ be a simple graph on $n\geq 3$ vertices. A \emph{special-line realization} of $G$ is a configuration $
P=\{p_v:v\in V\}\subset\R^d
$
of distinct points such that, for every edge $uv\in E$, the line through $p_u$ and $p_v$ contains a third point of $P$. (The vertex corresponding to this third point need not be adjacent to either $u$ or $v$.) The \emph{Sylvester--Gallai dimension} of $G$, denoted $\SGdim(G)$, is
\[
\SGdim(G)=\max\{\affdim(P): P \subset \R^d \text{ is a special-line realization of }G\},
\]
where the ambient dimension $d$ can be arbitrarily large.
\end{define}

The Sylvester--Gallai theorem can now be stated as
\[
\SGdim(K_n)\leq 1,
\]
where $K_n$ is the complete graph on $n\geq 3$ vertices (it is in fact an equality, as the points are distinct). A generalization of this theorem for graphs with large minimum degree was first given in~\cite{BDWY}, with the following improvement appearing in~\cite{DSW}. The dimension bound given by this theorem is tight, up to the exact constant (take for example, a union of $n/D$ copies of $K_D$ and place them on lines in independent directions).

\begin{thm}[Quantitative Sylvester--Gallai]
\label{thm:quantitative-sg}
Let $G$ be a graph on $n\geq 3$ vertices with minimum degree at least $D\geq 1$. Then
\[
\SGdim(G)\leq\frac{12(n-1)}{D}.
\]
\end{thm}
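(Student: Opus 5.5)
This follows the rank-bound method for design matrices of Barak--Dvir--Wigderson--Yehudayoff and Dvir--Saraf--Wigderson. Let $P=\{p_v\}$ be a special-line realization and put $d=\affdim(P)$. Lift the points to $\R^{d'+1}$ by $q_v=(p_v,1)$. The lifted vectors span a space of dimension $d+1$. Let $Q$ be the $n\times(d+1)$ matrix whose rows are the $q_v$ written in a basis of that span, so that $\mathrm{rank}(Q)=d+1$. We will build a matrix $A$ with $AQ=0$, which gives
\[
d+1\le n-\mathrm{rank}(A).
\]
It then suffices to show $\mathrm{rank}(A)\ge n-12(n-1)/D-1$, up to harmless constant bookkeeping.

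\textbf{Step 1: build $A$ from special lines.} For each edge $uv$, the line through $p_u,p_v$ contains a third point $p_w$. This gives a linear dependency $\alpha q_u+\beta q_v+\gamma q_w=0$ with all three coefficients nonzero, i.e.\ a row of $A$ supported on $\{u,v,w\}$. We want each pair $\{u,v\}$ to appear in the supports of few rows. To arrange this, we do not use one row per edge. Instead, for each special line $L$ containing $k\ge3$ points, we use a family of triples on $L$ in which every pair of points of $L$ is covered at least once and at most a constant number of times. Such families exist: take triangles of a near-Steiner triple system, or for small $k$ use the explicit constructions of \cite{BDWY}. We keep only the lines through at least one edge of $G$. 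Each vertex $v$ has at least $D$ neighbours, and these lie on lines through $p_v$. So the support of column $v$ meets at least $D$ distinct other indices through triples containing $v$, and column $v$ has at least about $D/2$ nonzero entries. Two columns share at most a constant number of supporting rows, since two points determine a unique line.

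\textbf{Step 2: rank bound for design matrices.} We then have a $(3,D/2,c)$-design matrix: rows of support at most $3$, each column of support at least about $D/2$, and any two columns sharing at most $c$ rows. The rank bound of \cite{DSW} for such matrices over $\R$ (and $\mathbb{C}$) gives
\[
\mathrm{rank}(A)\ge n-\left(\frac{q\,t\,n}{2s}\right)^2
\]
in the original form, and $n-O(tn/s)$ in the improved form. Applying the improved bound with $q=3$, $s\approx D/2$, $t=c$ and tracking constants should yield $n-\mathrm{rank}(A)\le 12(n-1)/D+1$, which is the claim.

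\textbf{Main obstacle.} The real work lies in the rank bound of Step 2. The plan is to scale $A$ by matrix scaling (Sinkhorn/Rothblum--Schneider), which is possible by the design property, to a form where every row has squared norm $1$ and every column has squared norm $m/n$. Then compare $A^*A$ to a diagonally dominant matrix. The off-diagonal entries are bounded using the intersection bound $t$, and a standard diagonal-dominance lemma gives $\mathrm{rank}(A^*A)\ge n^2/(n+\sum|M_{ij}|^2)$ or a sharper linear variant. Getting the linear dependence $12(n-1)/D$ instead of a quadratic one requires the refined \cite{DSW} argument. Since the paper cites this theorem from \cite{DSW}, I would invoke that rank bound as a black box and only verify the design parameters and constants.
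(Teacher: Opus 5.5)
Your overall route is the same as the paper's. The paper obtains this theorem from Lemma~\ref{lem:relative-sg} with $h=0$: it takes the $(3,3K,6)$-design matrix supplied by Lemma~\ref{lem:dsw-design-matrix} and applies $\operatorname{rank}(A)\geq n/(1+q(q-1)t/k)$ from Proposition~\ref{prop:dsw-design-rank}. Your homogenization $q_v=(p_v,1)$, in place of the paper's ``translate a point to the origin and delete its column'', is an immaterial difference. With a $(3,3D,6)$-design one gets $d+1\leq 12n/(D+12)$, which already implies $d\leq 12(n-1)/D$.

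The gap is in Step 1, and the constant $12$ depends on it. The parameters you actually verify are column support $k\approx D/2$ (from ``each pair covered at least once'') and pair intersection $t=c$ (the maximal multiplicity). The rank bound then gives $q(q-1)t/k=12c/D$. So ``tracking constants'' yields $12$ only when $c=1$, i.e.\ an exact Steiner triple system on every special line. Such systems exist only when the number of points on the line is $\equiv 1,3\pmod 6$. For example, on a $4$-point line any two triples share a pair, so $c\geq 2$ is forced and your count gives $24$ or worse.

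The rank bound sees only the global minimum column support and the global maximum pair intersection. What you need is a uniform ratio: for one fixed $\lambda$, every pair on a line lies in at most $\lambda$ triples, while every point of an $r$-point line lies in at least $\lambda(r-1)/2$ of them. The BDWY Latin-square family achieves this with $\lambda=6$ for every $r\geq 3$. Take an idempotent Latin square $L$ of order $r$ and the $r^2-r$ triples $(i,j,L(i,j))$ with $i\neq j$. Each point lies in exactly $3(r-1)$ triples and each pair in exactly $6$.

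Use this family on every special line containing an edge. Each neighbor of $v$ is a distinct point on such a line through $p_v$, so column $v$ has support $3\sum_{\ell\ni p_v}(r_\ell-1)\geq 3D$. Hence $A$ is a $(3,3D,6)$-design, $q(q-1)t/k=12/D$, and the stated bound follows. This is exactly the content of Lemma~\ref{lem:dsw-design-matrix}, which the paper cites rather than rebuilds. Your remark that the BDWY construction is needed only ``for small $k$'' is therefore backwards: it is needed for every line size to get the constant $12$.
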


In this work we complement these results by proving several new \emph{lower bounds} on the SG dimension of certain graph families, as well as an upper bound for random graphs which beats the one of Theorem~\ref{thm:quantitative-sg} by an additional factor of $D$ (for sufficiently large $D$).

\subsection{Our results}\label{subsec:our-results}

Our first result is a lower bound for graphs of bounded maximum degree. We conjecture that the $D^3$ factor could be replaced by $D^2$ (which would be tight, for sufficiently large $D$, by Theorem~\ref{thm:main-random-tradeoff}).
\begin{thm}[Bounded maximum degree]
\label{thm:main-bounded-degree}
Let $G$ be a graph on $n\geq 3$ vertices with maximum degree at most $D \geq 1$. Then
\[
\SGdim(G)=\Omega\left(\frac{n}{D^3}\right).
\]
\end{thm}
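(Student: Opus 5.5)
We construct an explicit special-line realization of large affine dimension. The construction uses many disjoint "gadgets" placed in independent directions. The basic object is a \emph{collinear block}: a set $B\subseteq V$ of vertices whose points all lie on one line $\ell_B$. If $|B|\geq 3$, every edge inside $B$ is automatically satisfied, since its line is $\ell_B$ and that line contains a third point of $B$.

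So the goal is to partition $V$ (or most of it) into blocks so that every edge of $G$ lies inside a block, and to make the different blocks' lines as independent as possible. The difficulty is that edges between blocks are not covered. The fix is to \emph{merge} components: take the connected components of $G$ itself. If every component had size $O(D^3)$, then placing each component on its own line, in affinely independent directions, would give dimension roughly the number of components, which is $\Omega(n/D^3)$. Components of size at most $2$ need an extra point on their line; we borrow one from a neighboring block or handle them by pairing.

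Since $G$ may be connected, we instead delete edges by \emph{sacrificing vertices}, as follows.

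\begin{enumerate}
\item Greedily select a set $S\subseteq V$ such that the vertices in $S$ are pairwise at distance at least $3$ (any such maximal set has size at least $n/(D^2+1)$). Choosing more cleverly, take $S$ with pairwise distance at least $4$, giving $|S|\geq n/(1+D+D^2+D^3)=\Omega(n/D^3)$.
\item For each $s\in S$, let $N[s]$ be its closed neighborhood. Because the $s$ are far apart, the closed neighborhoods are disjoint and no edge joins $N(s)$ to $N(s')$.
\item Give each $s\in S$ a private direction $e_s$, with the $e_s$ linearly independent. Put all vertices outside $\bigcup_{s\in S}\{s\}$ on a single line $L$ through the origin, along with extra padding so that $L$ has at least $3$ points. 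Every edge among these vertices is then covered by $L$.
\item For each $s$, place $p_s$ so that each edge $sv$ with $v\in N(s)$ is special. Put $p_s$ on a line $\ell_s$ through $p_{v_0}$ in direction $e_s$, where $v_0\in N(s)$ is one chosen neighbor. To cover the other edges $sv$, we need the line $p_sp_v$ to contain a third point.
\end{enumerate}

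This is the main obstacle. One $s$ with several neighbors on $L$ forces several lines through $p_s$ to hit $L$ at further points, which makes those lines lie in the plane spanned by $L$ and $p_s$. The fix is to put the neighbors of $s$ not on the common line $L$, but on a plane $\Pi_s=\operatorname{span}(L,e_s)$. We place $N(s)\cup\{s\}$ together on a single line $m_s$ in $\Pi_s$ meeting $L$ at one point. The line $m_s$ contains all of $N[s]$ (at least $3$ points when $\deg s\geq 2$), so all edges from $s$ are covered.

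The remaining problem is edges from $N(s)$ to the rest of the graph. The distance-$4$ separation ensures that the endpoints of such edges lie in the "common" part, and routing them is exactly why a $D^3$ rather than $D^2$ loss appears. I would resolve this by putting the second neighborhoods on $L$ as well and choosing the positions generically, so that the finitely many required coincidences are consistent. Each $m_s$ adds one new dimension $e_s$, giving $\affdim\geq |S|=\Omega(n/D^3)$.

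I expect verifying the consistency of the positions on $L$ and $m_s$ to be the delicate step. If it fails, I would fall back to a probabilistic choice of $S$.
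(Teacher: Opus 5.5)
\textbf{There is a genuine gap.} It sits at the step you flag as delicate, and no choice of positions on $L$ and $m_s$, generic or otherwise, closes it.

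In your configuration the planes $\Pi_s=\operatorname{span}(L,e_s)$ pairwise meet only in $L$. So the only configuration points in $\Pi_s\setminus L$ lie on $m_s$. Take $v\in N(s)$, placed on $m_s\setminus L$, and a neighbor $w$ of $v$ at distance two from $s$, which you place on $L$. Unless $p_w$ is the single point $m_s\cap L$, the line $p_vp_w$ lies in $\Pi_s$ and differs from both $L$ and $m_s$. It therefore meets the configuration only in $p_v$ and $p_w$, so the edge $vw$ is ordinary for every choice of positions. Since $v$ can have up to $D-1$ such neighbors, this cannot be avoided.

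Genericity only destroys coincidences, whereas here the required third point does not exist. Moving the second neighborhood onto $m_s$ as well just pushes the same failure out to distance three.

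Your first attempt fails for the same reason, and your diagnosis of it is slightly off. A line from $p_s\notin L$ to $p_v\in L$ meets $L$ only in $p_v$, so its third point must be a configuration point of the same plane \emph{off} $L$. With only one vertex off $L$ per new dimension this is impossible. So the count $|S|=\Omega(n/D^3)$ from distance-$4$ separation is not where the $D^3$ comes from.

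\textbf{The missing idea} is to put many vertices of $S$ on the same page, so that they supply one another's third points, at the cost of a factor $D$ in dimension. Here is the construction the paper uses.
\begin{enumerate}
\item Take $S$ with pairwise distances at least $3$. Then $S$ is independent with pairwise disjoint neighborhoods, and $|S|\geq n/(D^2+1)$.
\item Split $S$ into blocks of about $2D+1$ vertices, leaving at least three vertices unused.
\item For each block take a projective plane $\Pi_i\supset L$, with the planes meeting pairwise in $L$, and treat $L$ as the line at infinity of $\Pi_i$.
\item Place the block at generic affine points of $\Pi_i$. Then all secant directions within the block are distinct and avoid the finitely many points of $L$ already used.
\item Orient the complete graph on the block so that every vertex has out-degree $D$.
\item Place the at most $D$ neighbors of $x$ at the points at infinity of its out-going secants $xy$. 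Then $p_x,p_y,p_v$ are collinear, so every edge at $x$ is special.
\item Disjointness of the neighborhoods means no vertex is asked to sit in two places. Every remaining vertex goes on $L$, which makes all edges not incident to $S$ special.
\end{enumerate}
Each page adds one dimension, giving $\Omega(|S|/D)=\Omega(n/D^3)$. The $D^3$ is $D^2$ from the separation times $D$ from the block size. This is the paper's proof, via Lemma~\ref{lem:free-core-book} with empty core.
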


Our next theorem only assumes a bound on the \emph{average} degree of $G$. In this case, we are able to prove a much weaker bound. We do not know if this bound is tight or whether an $\Omega_D(n)$ bound could be proven here as well.

\begin{thm}[Bounded average degree]
\label{thm:main-sparse}
For every integer $D\geq 1$, every graph $G$ on $n\geq 3$ vertices with average degree at most $D$ satisfies
\[
\SGdim(G)=\Omega_D(n^{1/D}).
\]
\end{thm}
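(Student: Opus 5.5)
The plan is to find a large, highly structured family of low-degree vertices, and give each member its own new direction in space while all remaining vertices are placed on a single line. The target dimension $\Omega_D(n^{1/D})$ suggests an Erd\H{o}s--Rado sunflower argument on bounded-size neighbourhoods.

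\textbf{Step 1 (extracting a sunflower).} Since the average degree is at most $D$, at least $n/2$ vertices have degree at most $2D$. Greedily pass to an independent subset $I$ of these low-degree vertices; since $G[\cdot]$ restricted to them has bounded degree, $|I|=\Omega_D(n)$. The neighbourhoods $N(s)$ for $s\in I$ are sets of size at most $2D$, and each set arises from only a bounded number of $s$ after a further cleaning step. By the sunflower lemma (or a direct pigeonhole/Ramsey iteration over the degree, which is where the exponent $1/D$ enters), we obtain $m=\Omega_D(n^{1/D})$ vertices $s_1,\dots,s_m\in I$ whose neighbourhoods form a sunflower: a common core $C$ and pairwise disjoint petals $Q_i=N(s_i)\setminus C$. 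If needed, we also pigeonhole so that all $|Q_i|$ are equal, and pass to a subfamily of size a constant fraction in which no petal vertex lies in another petal's closed neighbourhood.

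\textbf{Step 2 (the construction).} All vertices outside $\{s_i\}\cup\bigcup_i Q_i$, including $C$, go on one line $L$, at distinct points; at least three of them, which we can assume after discarding $O(1)$ sunflower members. Every edge among these vertices is then special automatically. For each $i$ we pick a fresh direction $w_i$, linearly independent from everything else, and place $s_i$ and $Q_i$ inside the $2$-flat spanned by $L$ and $w_i$. This flat is the natural home for $s_i$: for each core vertex $c$, the line $p_{s_i}p_c$ lies in it.

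We must arrange two things inside this flat:
\begin{itemize}
\item each line $p_{s_i}p_c$ with $c\in C$ hits a further configuration point;
\item each edge inside $\{s_i\}\cup Q_i$, and each edge from $Q_i$ to $L$, is special.
\end{itemize}
The idea is to put $Q_i$ together with $s_i$ on one auxiliary line $M_i$ off $L$, with $|Q_i|+1\geq 3$ after the pigeonholing; this makes edges inside $\{s_i\}\cup Q_i$ special. The flats for different $i$ meet only in $L$, so edges between different flats cannot occur, because the petals are disjoint and the $s_i$ are independent. The affine dimension is then at least $m$, giving $\SGdim(G)=\Omega_D(n^{1/D})$.

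\textbf{Step 3 (the main obstacle).} The hard part is the edges from $s_i$ or from $Q_i$ to vertices on $L$, above all the core edges $s_i c$, $c\in C$. A line through a point of $M_i$ and a point of $L$ meets the rest of the configuration only in the flat of index $i$. So the third point must come from $Q_i\cup\{s_i\}$ or from $L$ itself, and it cannot come from $L$.

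What I would try first is a projective "perspectivity" arrangement inside the $2$-flat. Place the points of $Q_i$ so that the line from $s_i$ to each core point $c$ passes through a designated petal point: project $C$ from $p_{s_i}$ onto $M_i$. Then handle edges from petal vertices to $L$ by a second such projection, applied recursively. This recursion is where the degree bound $D$ is consumed, and it is why I expect the exponent to degrade to $1/D$: each level of recursion is paid for by one more round of pigeonholing in Step 1. If the counts do not match, for instance too few petal points, the fallback is to strengthen Step 1 so that the core $C$ is empty and only petal edges remain. These can then be handled by the line $M_i$ alone, once every edge from $Q_i$ to $L$ has been removed by choosing the sunflower among closed neighbourhoods.
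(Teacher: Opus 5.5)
Your construction in Steps 2--3 has a genuine gap: with one sunflower member $s_i$ per page, the required edges cannot all be made special. Any line joining an off-spine point of page $i$ to a point of $L$ can find its third point only among the at most $2D$ other off-spine points of that page, as you observe.

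This is already fatal for the core edges. If $\{s_i\}\cup Q_i$ lies on a line $M_i$, then for $c\in C$ the line $p_{s_i}p_c$ meets $M_i$ only at $p_{s_i}$ unless $p_c=M_i\cap L$. So at most one core edge gets a third point. The proposed perspectivity ``from $p_{s_i}$ onto $M_i$'' is degenerate, because $p_{s_i}\in M_i$.

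Nor can you force $C=\emptyset$ or $|Q_i|\geq 2$. In the star $K_{1,n-1}$ (average degree below $2$), every low-degree vertex is a leaf with neighbourhood $\{c\}$. Every sunflower among these neighbourhoods has core $\{c\}$ and empty petals, so a page carrying a single leaf leaves the edge $s_ic$ with no possible third point.

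The petal vertices cause a second, independent failure. Nothing bounds $d_G(y)$ for $y\in Q_i$, and distinct lines through $p_y$ need distinct third points, so $y$ can have at most $|Q_i|$ special edges to vertices on $L$. Choosing the sunflower among closed neighbourhoods $N[s]$ gives no control over $N(y)$. For the same reason, the ``constant-fraction'' removal of cross-petal edges is unjustified. The recursion you sketch therefore never closes.

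The missing idea is to put \emph{every} neighbour of the sunflower members, core and petals alike, on the spine $L$, and to place a block of $\Theta(D)$ sunflower members on each page, viewing $L$ as its line at infinity. The third point on the line through $x$ and $v\in N(x)$ is then another member $x'$ of the same block with $x'-x$ in direction $v$. Core directions are shared via homothetic copies of a regular $2r$-gon. Petal vertices sit at private, generic cross-copy directions distributed by an Euler-tour orientation (Lemma~\ref{lem:free-core-book}). A petal vertex then has at most one off-spine neighbour, and all its other edges lie on $L$, which is special because it carries at least three points. The price is only a factor $D$ in the number of pages.

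Your Step 1 also misses the exponent. With degree threshold $2D$, Erd\H{o}s--Rado gives only $n^{1/(2D)}$, and absorbing repeated neighbourhoods into a sunflower with repetitions costs a further power. The paper instead uses the at least $n/(D+1)$ vertices of degree at most $D$ and splits into two cases:
\begin{itemize}
\item many repeated neighbourhood types, handled by blocks of identical neighbourhoods in the intersecting-core book of Lemma~\ref{lem:iterated-fixed-core-book}, which gives a linear bound;
\item $\Omega_D(n)$ distinct types, where Erd\H{o}s--Rado gives a sunflower of size $\Omega_D(n^{1/D})$.
\end{itemize}
The exponent $1/D$ comes only from this sunflower bound, not from any recursion.
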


Our next theorem gives a linear lower bound for graphs excluding a fixed minor. These include forests, planar graphs, and graphs of bounded treewidth, among others.

\begin{thm}[Fixed excluded minor]
\label{thm:main-fixed-minor}
For every finite graph $H$, there is a constant $c_H>0$ such that every $H$-minor-free graph $G$ on $n\geq 3$ vertices satisfies
\[
\SGdim(G)\geq c_H n.
\]
\end{thm}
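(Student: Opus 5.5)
The plan is to reduce to a ``backbone'' configuration on one line, plus $\Omega_H(n)$ independent \emph{fresh directions}, one per small piece of $G$. I would use only the standard structure theory of $H$-minor-free graphs, not Theorem~\ref{thm:quantitative-sg}.

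\textbf{Step 1 (structural input).} Use three facts about $H$-minor-free graphs. The class is closed under subgraphs and contractions. Every member has average degree at most $d_H=O(|H|\sqrt{\log|H|})$ (Kostochka--Thomason), hence is $d_H$-degenerate. Every member has balanced separators of size $O_H(\sqrt n)$ (Alon--Seymour--Thomas). Applying the separator theorem recursively (the standard ``$r$-division'' argument), for a suitable constant $k=k_H$ I would find a set $S\subseteq V$ with $|S|\leq n/2$ such that every component of $G-S$ has at most $k$ vertices.

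By discarding components and degeneracy-counting, I would also aim to extract a family $\mathcal{C}$ of $\Omega_H(n)$ components $C$ with two properties: each $C$ has bounded size, and the edges between $C$ and $S$ are few, namely $O_H(|C|)$ in total. The second property should follow from $d_H$-degeneracy of the contraction of $G$ in which each component is shrunk to a single vertex, since that contraction is again $H$-minor-free. That contracted graph is a bipartite-like graph between $S$ and the components, plus edges inside $S$, so all but $O(|S|)$ of the components must have contracted degree at most $2d_H$.

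\textbf{Step 2 (the realization).} Place every vertex of $S$, together with every component not in $\mathcal{C}$, on a single line $\ell$. Since all these points are collinear and there are at least three of them, every edge among them is special. For each $C\in\mathcal{C}$, fix a fresh direction $e_C$ independent of everything else, and place the vertices of $C$ in the plane (or a bounded-dimensional flat) spanned by $\ell$ and $e_C$. Each such $C$ then raises $\affdim$ by at least one, which gives $\SGdim(G)\geq|\mathcal{C}|=\Omega_H(n)$, provided each piece can be placed off $\ell$ consistently.

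\textbf{Step 3 (local gadget; the main obstacle).} I must verify that each $C\in\mathcal{C}$, with its $O_H(1)$ edges to $S$, can be placed in its flat so that every edge inside $C$ and every edge from $C$ to $S$ lies on a special line. Crucially, at least one point of $C$ must lie off $\ell$. The danger is an edge $cs$ with $c$ off $\ell$ and $s\in\ell$: the line $cs$ meets $\ell$ only at $p_s$, so its third point must come from $C$ itself.

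My first attempt would be to choose $c'\in C$ with $p_{c'}$ on the ray from $p_s$ through $p_c$. Vertices of $C$ could be reused across many such lines by putting them at intersection points of lines from the (boundedly many) relevant $S$-points. This is a projective incidence problem in the plane with $O_H(1)$ points, and it is not obviously solvable for every small $C$. A fallback is to place only part of $C$ off $\ell$. For instance, take a single low-degree vertex $c$ of $C$ together with one partner $c'$ that is a non-neighbor, put $p_c$ and $p_{c'}$ at the same ``height'' in direction $e_C$, and leave all other vertices of $C$ on $\ell$. I would then arrange the positions on $\ell$ so that for each neighbor $w$ of $c$ the line $p_cp_w$ passes through a suitable point. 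If this still fails, I would enlarge $S$ by a constant factor to absorb the troublesome vertices, keeping $|\mathcal{C}|=\Omega_H(n)$ by the counting in Step~1.

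Making this local gadget work uniformly is where I expect the real difficulty to lie.
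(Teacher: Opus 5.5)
There is a genuine gap, and it is the step you flag. With one bounded component per fresh page, Step~3 is not merely hard but false in general. Suppose a page $\Pi\supset\ell$ meets the rest of the configuration only in $\ell$, and let $Y$ be the set of points of $\Pi$ off $\ell$. For $y\in Y$ and a neighbor $w$ placed on $\ell$, the line $p_yp_w$ meets $\ell$ only at $p_w$, so its third point must lie in $Y$. Regarding $\ell$ as the line at infinity, $y$ therefore needs partners in $Y$ in as many distinct directions as it has neighbors on $\ell$.

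Your fallback cannot meet this. The points $p_c,p_{c'}$ determine a single direction, and at equal height the line $p_cp_{c'}$ is parallel to $\ell$ and hits no point of $\ell$ at all. So $c$ could have at most one neighbor on $\ell$.

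A clean counterexample to the whole scheme is the planar graph $K_{2,n-2}$ with small side $\{u,v\}$. Any $S$ with $|S|\leq n/2$ such that $G-S$ has components of bounded size must contain $u$ and $v$; otherwise the component of $u$ (or $v$) has about $n/2$ vertices. Hence every component is a single vertex $c$ adjacent to $u,v\in S$, and no placement of $p_c$ alone off $\ell$ makes $cu$ or $cv$ special. Enlarging $S$ only shrinks $\mathcal C$. Yet $\SGdim(K_{2,n-2})=\Omega(n)$ by Lemma~\ref{lem:free-core-book}: each page carries four vertices at the corners of a parallelogram whose sides point to $p_u$ and $p_v$. So the unit that gets its own page must be a large group of vertices that supply each other's third points, not a component.

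The missing idea is the paper's. In each page, place a large independent set $X_i$ off the spine and put \emph{all} of $N(X_i)$ on the spine. Choose $X_i$ so that its neighborhoods form a sunflower with core $K_i$, with petals disjoint across all blocks.

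The construction of each page has two parts.
\begin{enumerate}
\item The shared core directions are realized by a projected hypercube of $2^{|K_i|}$ points, in which every point has a secant in each prescribed direction. The directions are prescribed because a core vertex may lie in several cores, so its spine position is fixed before the page is built.
\item The petals are handled by $2\Delta_t+1$ generic homothetic copies of that cube (Lemma~\ref{lem:generic-homothetic}), plus an Eulerian orientation of the complete multipartite graph on the copies. This gives each vertex enough private cross-copy directions, at which its petal vertices are placed on fresh spine points (Lemmas~\ref{lem:fixed-core-page} and~\ref{lem:iterated-fixed-core-book}).
\end{enumerate}

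Obtaining $\Omega(n)$ such blocks, each of size at least $2^{\Delta_t}(2\Delta_t+1)+3$, is where minor-freeness is used. An $r$-division yields $\Omega(n/r)$ regions with interiors of size $\Theta(r)$ and boundaries of size $O(\sqrt r)$. Inside each region, low-degree independent vertices are pigeonholed by their trace on the boundary via linear neighborhood complexity (Lemma~\ref{lem:minor-free-neighborhood-complexity}). The sunflower lemma with repetitions is then applied to their interior neighborhoods (Lemma~\ref{lem:minor-free-region-sunflower}).

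Your Step~1 produces bounded components with few edges to $S$, but nothing groups them by neighborhood type. That grouping is exactly what makes the pages realizable.
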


The value of the constant $c_H$ coming from our proof depends on constants from several other papers, and we make no attempt to make it explicit. For particular excluded minors one can derive explicit bounds. For example, for forests, a simple induction gives
\[
\SGdim(G)\geq\left\lfloor\frac{n-1}{2}\right\rfloor
\]
(we leave this as an exercise).

Our final result is an upper bound on the SG dimension of random graphs. It will be convenient to work with the following random graph model\footnote{The same theorem can also be derived for the more standard random $G(n,p)$ model (when $pn\gg\log n$) using a standard coupling argument (we omit the details).} 

\begin{define}[Random $\widehat G_{n,D}$ graph]
\label{def:symmetrized-random-out}
Let $n\geq 2$ and $1\leq D\leq n-1$ be integers. Independently for each $v\in[n]$, choose a uniformly random $D$-element set $N^+(v)\subseteq[n]\setminus\{v\}$. The \emph{symmetrized random $D$-out graph} $\widehat G_{n,D}$ is the graph on $[n]$ in which $u$ and $v$ are adjacent if $u\in N^+(v)$ or $v\in N^+(u)$.
\end{define}
Throughout the paper, all logarithms are to base $2$.
\begin{thm}[$\widehat G_{n,D}$ upper bound]
\label{thm:main-random-tradeoff}
There are absolute constants $A>0$ and $K>1$ such that, for all sufficiently large $n$ and all integers $K\log n\leq D\leq n-1$,
\[
\SGdim(\widehat G_{n,D})
\leq
\max\left\{2,\left\lceil\dfrac{An\log n}{D^2\log(D/\log n)}\right\rceil\right\}
\]
with probability $1-o(1)$ as $n\to\infty$.
\end{thm}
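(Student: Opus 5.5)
We prove the upper bound for $\widehat G_{n,D}$ by a first-moment (union bound) argument over ``combinatorial types'' of special-line realizations, combined with a rank argument that turns high dimension into a lot of forced structure. Fix a realization $P$ of $G=\widehat G_{n,D}$ with $\affdim(P)=k$. Each edge $uv$ lies on a special line, so the set $\mathcal L$ of special lines of $P$ covers every edge. A line $\ell\in\mathcal L$ with $|\ell\cap P|=m$ covers at most $\binom m2$ edges. The intuition is this: if all special lines were short (say size $3$), covering the roughly $nD$ edges would need about $nD$ collinear triples. Each triple imposes an affine dependency, and these dependencies must all be consistent with dimension $k$. For a random graph the triples cannot be chosen freely, since they must contain the random edges. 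So the plan encodes $P$ by a small certificate and shows that the probability that some certificate is compatible with the random graph is $o(1)$ once $k$ exceeds the stated threshold.

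\textbf{Step 1 (structure from dimension).} I would use a projection/rank argument in the spirit of Theorem~\ref{thm:quantitative-sg} (as in \cite{BDWY,DSW}). A realization of affine dimension $k$ contains a set $B$ of $k+1$ affinely independent points. Every other point is determined by $B$ through the chain of collinearities. Conversely, the incidence structure of special lines must have a ``dimension matrix'' of rank at least $n-k-1$. Concretely, build a design-like matrix with one row per collinear triple $(u,v,w)$, where the row encodes $p_w=\alpha p_u+(1-\alpha)p_v$. The solution space has dimension at least $k$, so any collection of such rows has rank at most $n-k-1$. The key deduction I aim for is the following. If $k$ is large, there is a set $S$ of about $s$ vertices that is ``sparse'' in $\mathcal L$: few special lines meet $S$ in two or more points. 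Yet every edge of $G$ inside $S$ (or from $S$ to a small set) must be covered by such a line. So $G[S]$ has an edge set contained in a union of few cliques, each clique being a special line restricted to $S$, with the line's third points forced into a small set $T$.

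\textbf{Step 2 (union bound).} I would enumerate pairs $(S,T)$ together with a partition of the relevant edges into lines (cliques) of bounded description length. For a fixed candidate, a random $\widehat G_{n,D}$ must place each of the $D$ out-edges of each $v\in S$ only onto the allowed neighbours, namely the $O(\cdot)$ other points on the special lines through $v$ that the certificate permits. If each vertex is allowed only $r\ll n$ targets, this happens with probability about $(r/n)^{D|S|}$. Summing over $\binom ns$ choices of $S$ and over the line structures, counted as roughly $n^{O(s\log n/\log(D/\log n))}$ encodings, and choosing $s\approx An\log n/(D^2\log(D/\log n))$ with $D\ge K\log n$, the total is $o(1)$. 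The $\log(D/\log n)$ factor should come from trading line sizes $m$ against the number of lines: $\binom m2$ edges per line versus $m\log n$ bits to describe it.

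\textbf{Main obstacle.} The hard step is Step 1: showing that dimension $k$ forces a certificate whose description is short, of size $\tilde O(n/k)$-ish, relative to the probability cost $\approx D^2$ per described unit. Otherwise the union bound only recovers the $n/D$ bound of Theorem~\ref{thm:quantitative-sg}. My first attempt would be a greedy peeling argument. I would repeatedly take a point together with its special lines, and note that if the ``span'' grows slowly then many edges are absorbed into few lines, which the random graph forbids: two random vertices share a special line with a bounded third point only with probability $O(D/n)$. The extra factor of $D$ should come exactly from the fact that each special line must carry $\Omega(D)$ random edges to be useful, so long lines are required, and long lines are rare in a random graph.
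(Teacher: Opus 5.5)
There is a genuine gap. Step~1, which you yourself call the crux, is never carried out. The two quantitative inputs that everything rests on are asserted rather than derived: the certificate count $n^{O(s\log n/\log(D/\log n))}$, and a per-vertex cost that yields a \emph{second} factor of $D$. The value of $s$ is simply read off from the target bound. Your certificates ($S$, $T$, and a clique decomposition of edges) are abstract incidence data and never use geometric realizability, so there is no evident reason for their number to be small.

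The idea you are missing on the counting side is to union-bound over ordinary graphs. A configuration $P$ realizes $G$ iff $E(G)\cap E(\Ord(P))=\emptyset$. After a generic projection to $\R^2$ that preserves collinearity of triples, $\Ord(P)$ is determined by the zero pattern of the $\binom n3$ quadratic collinearity determinants in $2n$ variables. The R\'onyai--Babai--Ganapathy bound then gives at most $e^{Bn\log n}$ ordinary graphs, in every dimension (Lemma~\ref{lem:ordinary-graph-entropy}). It therefore suffices to show that each $O\in\Ord(n)_{\geq r+1}$ is avoided by $\widehat G_{n,D}$ with probability at most $e^{-(B+1)n\log n}$.

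On the probability side, the extra $D$ does not come from ``long lines being rare in a random graph''. The configuration is chosen after the graph is revealed, so nothing about its lines is random. The extra $D$ comes from a relative Sylvester--Gallai bound: if $P$ realizes a graph of minimum degree at least $D$ and $H$ is an $h$-flat, then $|P\setminus H|\geq\frac{D}{12}(\affdim(P)-h)$ (Lemma~\ref{lem:relative-sg}). This is proved by quotienting by $H$ and applying the DSW design-matrix rank bound to the columns indexed by points off $H$. The argument then projects generically to $\R^3$ and splits into two cases.

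\textbf{No heavy plane.} If no plane holds more than $2n/3$ points, de Zeeuw's theorem gives $\Omega(n^2)$ ordinary pairs. These are all avoided with probability $e^{-\Omega(Dn)}$.

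\textbf{A heavy plane.} Otherwise a plane contains all but $q$ projected points, so a $2$-flat contains all but $q$ points of $P$. Moreover $q\geq D(r-1)/12$, because any avoiding outcome has minimum degree at least $D$ and is realized by $P$. Take a point $x$ off the plane. It can be adjacent only to the other $q-1$ off-plane points, or to the at most $q-1$ points where lines from $x$ through those points meet the plane. So all $D$ out-choices of each such $x$ must fall in a fixed set of size $2q-2$. This gives probability at most $(2q/n)^{Dq}\leq\exp(-\Omega(D^2r\log(1+n/(Dr))))$.

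Your $S$ has size comparable to the dimension, not $D$ times it. Hence your cost $(\cdot/n)^{D|S|}$ contains only one factor of $D$, and against an $e^{\Theta(n\log n)}$ count this gives only an $n/D$-type bound, as you anticipate. Balancing $D^2r\log(1+n/(Dr))$ against $Bn\log n$ is what produces the threshold in the statement.
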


Notice that for the range $D<K\log n$, Theorem~\ref{thm:quantitative-sg} already  gives $\SGdim(\widehat G_{n,D})\leq 12(n-1)/D$. For larger $D$, however, Theorem~\ref{thm:main-random-tradeoff} is stronger (e.g., for $D=n^\eps$ it is roughly $n/D^2$ instead of $n/D$).
In particular, taking $D \sim \sqrt n$ it shows that there exists a graph $G$ with $O(n^{3/2})$ edges and $\SGdim(G)\leq 2$. We do not know, however, how to construct such a graph explicitly (say by a deterministic algorithm running in time polynomial in $n$), even if we replace $2$ with any other constant.

\subsection{Proof overview}\label{subsec:proof-overview}

The three lower bounds (for bounded maximum degree, bounded average degree, and minor-free graphs) all rely on the same basic construction, although with increasing complications. We start with the simplest case, when all degrees in $G$ are at most $D$. We first greedily find a set $X$ of $\Omega(n/D^2)$ vertices with disjoint neighborhoods. We construct a projective `book' composed of two-dimensional `pages' $\Pi_1,\Pi_2,\ldots$ that all intersect in a single line $L$ (the `spine'), so that each page contributes one additional dimension (see Section~\ref{sec:basic-book} for details). We start by placing all vertices not adjacent to any vertex of $X$ on the spine in arbitrary locations. We leave at least three distinct vertices on the spine, so the spine is a special line.

We now partition $X$ into $p=\Omega(n/D^3)$ `blocks' $X_1,\ldots,X_p$, each of size at least $2D+1$. We show how each block $X_i$ can be placed on page $\Pi_i$ (outside the spine) in a way that makes all edges touching $X_i$ special. The neighbors of $X_i$ (which we know are not neighbors of any other $X_j$) will be placed on the spine. We may identify the spine $L$ with the line at infinity of $\Pi_i$. Hence, we would like to place each $x\in X_i$ at a point $p_x\in\Pi_i\setminus L$ and place the neighbors $N_G(x)$ on $L$, so that each edge $xv$, with $v\in N_G(x)$, lies on a special line. Since $L$ is the line at infinity, this edge will be special precisely when there is another vertex $y\in X_i$ such that the line connecting $x$ and $y$ has direction $v$. Suppose we place the $2D+1$ points of $X_i$ generically in $\Pi_i\setminus L$, so that each pair of points has a distinct direction. We can now place the neighbors of $x$ on $L$ at positions corresponding to $D$ `private' directions (between $x$ and some other $D$ points of $X_i$). These private directions can be obtained by orienting the edges of the complete graph $K_{2D+1}$ so that each vertex has outdegree $D$. Figure~\ref{fig:bounded-degree-page} illustrates this construction on a single page.

\begin{figure}[h!]
\centering
\begin{tikzpicture}[x=.82cm,y=.84cm,line cap=round,line join=round]
  \fill[gray!4] (1.8,-2.55)--(1.8,2.55)--(13.2,2.12)--(13.2,-2.12)--cycle;
  \draw[gray!65] (1.8,-2.55)--(1.8,2.55)--(13.2,2.12)--(13.2,-2.12)--cycle;
  \draw[very thick] (1.8,-2.72)--(1.8,2.72);
  \node[left] at (1.68,2.70) {$L$};
  \node at (12.62,2.35) {$\Pi_i$};

  \coordinate (vone) at (1.8,2.0);
  \coordinate (vtwo) at (1.8,1.04);
  \coordinate (wone) at (1.8,-2.0);
  \coordinate (wtwo) at (1.8,-.971);
  \draw[gray!60] (1.8,1.52) ellipse (.38 and .58);
  \draw[gray!60] (1.8,-1.486) ellipse (.38 and .62);
  \node[left] at (1.31,1.52) {$N_G(x)$};
  \node[left] at (1.31,-1.486) {$N_G(x')$};
  \foreach \p in {vone,vtwo,wone,wtwo} \draw[fill=white] (\p) circle (2.15pt);
  \foreach \y in {-.55,-.2,.15,.5} \fill[gray!55] (1.8,\y) circle (1.35pt);
  \node[left,gray!70] at (1.58,-.03) {other spine vertices};

  \coordinate (px) at (5.8,.8);
  \coordinate (pxp) at (9.8,-.4);
  \coordinate (yone) at (10.8,.5);
  \coordinate (ytwo) at (11.8,0);
  \coordinate (ythree) at (11.2,-.3);
  \draw[densely dashed,gray!55] (8.9,.12) ellipse (3.60 and 1.38);
  \node at (8.9,1.62) {$X_i$};
  \foreach \a/\b in {px/pxp,px/yone,px/ytwo,px/ythree,pxp/yone,pxp/ytwo,pxp/ythree,yone/ytwo,yone/ythree,ytwo/ythree} {
    \draw[gray!25] (\a)--(\b);
  }

  \draw[thick,gray!70] (vone)--(pxp);
  \draw[thick,gray!70] (vtwo)--(yone);
  \draw[thick,densely dashed,gray!70] (wone)--(ytwo);
  \draw[thick,densely dashed,gray!70] (wtwo)--(ythree);
  \draw[->,very thick] (px)--(pxp);
  \draw[->,very thick] (px)--(yone);
  \draw[->,very thick,densely dashed] (pxp)--(ytwo);
  \draw[->,very thick,densely dashed] (pxp)--(ythree);

  \fill (px) circle (2.25pt) node[above left] {$x$};
  \fill (pxp) circle (2.25pt) node[below left] {$x'$};
  \fill (yone) circle (1.8pt) node[above] {$y_1$};
  \fill (ytwo) circle (1.8pt) node[right] {$y_2$};
  \fill (ythree) circle (1.8pt) node[below] {$y_3$};
\end{tikzpicture}
\caption{One page of the bounded-degree construction, shown schematically for $D=2$. The block $X_i$ is placed generically in $\Pi_i\setminus L$. After orienting the complete graph on $X_i$ with outdegree $D$, each arrow $x\to y$ supplies a private direction for $x$. A vertex $v\in N_G(x)$ is placed at the corresponding point of $L$, so $v,x,y$ are collinear and the edge $xv$ lies on a special line. The solid and dashed lines show the directions assigned to $x$ and $x'$, respectively; the remaining tournament arrows are suppressed. Since the neighborhoods of vertices in $X$ are disjoint, all assigned points on $L$ are distinct.}
\label{fig:bounded-degree-page}
\end{figure}

We now switch to the case of graphs with \emph{average degree} at most $D$. Here, we cannot find an independent set $X$ with disjoint neighborhoods as before. Instead, using the sunflower lemma, we find an independent set $X$ of size roughly $n^{1/D}$ such that the neighborhoods $N_G(x)$, $x\in X$, form a sunflower with core $K$ and petals $P_x$ (i.e., each $N_G(x)=K\sqcup P_x$ and the sets $P_x$ are disjoint). One needs to be careful with vertices whose neighborhoods are identical (see the full proof for details). The former book construction handles the special case of disjoint neighborhoods (empty $K$) and can be extended to the case of nonempty $K$ as follows. Suppose $|K|=a$ and partition $X$ as before, but now require each $X_i$ to have size $2a(2D+1)$. We place all vertices nonadjacent to $X$ on the spine $L$ and also place all vertices of $K$ at special points $w_1,\ldots,w_a\in L$ (to be chosen later). Identifying $L$ as the line at infinity, we now wish to place each $x\in X_i$ in $\Pi_i\setminus L$ so that (1) $x$ lies on lines in all directions $w_1,\ldots,w_a$, and (2) $x$ lies on $|P_x|$ lines in `private' directions. This can be done by placing $2D+1$ copies (scaled and shifted generically) of a regular $2a$-gon. See Lemma~\ref{lem:free-core-book} (and  Figure~\ref{fig:free-core-book}) for details.

The minor-free theorem reduces, via standard tools from the literature on minor-free graphs (in particular that of $r$-divisions), to a  version of the preceding `sunflower book' construction in which the neighborhoods in each $X_i$ form a sunflower with core $K_i$ and all the petals are disjoint, even between different blocks. The cores might intersect, which creates a difficulty in the previous argument, since we cannot place the vertices of $K_i$ at the special directions coming from the regular polygon. To solve this problem, we replace the regular $2a$-gon with a projected `hypercube' of size $2^a$. This cube has the property that each vertex lies on $a$ lines in some fixed $a$ directions. See Lemma~\ref{lem:fixed-core-page} for details.

The upper bound for random graphs (Theorem~\ref{thm:main-random-tradeoff}) has two ingredients. The first is an upper bound on the number of `ordinary graphs', which are graphs arising from ordinary pairs  (pairs defining ordinary lines) in some point configuration $P \subset \R^d$. One can use known bounds on the zero patterns of low degree polynomial maps to show that the number of such graphs on $n$ vertices is asymptotically at most $\exp(n \log(n))$. The second ingredient is a lower bound on the number of ordinary pairs in sufficiently high dimensional  point sets $P$ (and hence on the number of edges in the corresponding ordinary graph $G$). This lower bound uses the design-matrix machinery from \cite{BDWY,DSW} as well as a result from \cite{deZeeuw}. This implies a strong probability upper bound for the event that a random graph `misses' all edges in some fixed ordinary graph $G$ coming from a high dimensional point set $P$. Combining these two ingredients, we can perform a union bound and show that a random graph will `hit' at least one edge in each high dimensional ordinary graph w.h.p. Such a graph cannot have a special line realization in the same dimension since this will force its edges to be disjoint from the edges in the ordinary graph for the same configuration.

\subsection{Paper organization}

Section~\ref{sec:basic-book} develops the geometric  constructions used throughout the paper. Section~\ref{sec:sparse} proves Theorems~\ref{thm:main-bounded-degree} (bounded degree graphs)   and~\ref{thm:main-sparse} (graphs with bounded average degree). Section~\ref{sec:minor-free} proves Theorem~\ref{thm:main-fixed-minor} (minor free graphs) and Section~\ref{sec:random-out} proves Theorem~\ref{thm:main-random-tradeoff} (random graphs).

\subsection{Acknowledgements}
Commercially available AI tools (ChatGPT 5.5 and 5.6) were used in all stages of research, including proof development and manuscript preparation. The author is fully responsible for the contents of the paper. Research supported by NSF grant  DMS-2246682.

\section{Geometric constructions}\label{sec:basic-book}
In this section we develop the necessary geometric machinery used in the proofs of our three lower bound theorems. We start with some basic gadgets in the first sub-section and then continue with the two main lemmas in the following two sub-sections.

\subsection{Secant gadgets}\label{sec:secantgad}
For a finite set of points $X\subset\P\R^d$, the \emph{secants} of $X$ are the lines connecting at least two distinct points of $X$. We will be interested in constructing sets $X$ under prescribed conditions on secant directions. For example, given a fixed set of directions $S$, we may want every point of $X$ to belong to a secant in each direction in $S$, while avoiding certain directions or requiring some secant directions to be unique to the configuration.\footnote{Closely related realization questions, in which directions are assigned to the edges of a graph, are studied under the names \emph{direction networks} and \emph{parallel drawings} see e.g.~\cite{Whiteley1988,Whiteley1989,Whiteley1996,StreinuTheran2010}.  Our setting differs in that some directions are prescribed while others must be globally distinct and avoid a finite forbidden set.}

The first construction  gives a simple gadget (the regular $2k$-gon) in which every one of the $2k$ points belongs to secants of $k$ fixed directions.

\begin{lem}[Polygon directions]\label{lem:polygon-directions}
For every integer $k\geq 1$, there are sets of directions $\mathcal D\subset\RP^1$ and points $Z\subset\R^2$, with $|\mathcal D|=k$ and $|Z|=2k$, such that every point of $Z$ belongs to a secant of $Z$ in each direction in $\mathcal D$.
\end{lem}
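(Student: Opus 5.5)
We take $Z$ to be the vertex set of a regular $2k$-gon centered at the origin and use the standard fact that its diagonals and sides come in exactly $k$ parallel classes. Concretely, set
\[
z_j=\bigl(\cos(\pi j/k),\sin(\pi j/k)\bigr)\in\R^2,\qquad j\in\mathbb Z/2k\mathbb Z,
\]
so $|Z|=2k$ and the points are distinct. Direction classes modulo $\pi$ are encoded by residues of $j+l$ modulo $2k$.

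The key computation is the direction of the chord through $z_j$ and $z_l$ for $j\neq l$. Writing $z_j=e^{i\theta_j}$ with $\theta_j=\pi j/k$, we have
\[
z_l-z_j=e^{i(\theta_j+\theta_l)/2}\,\bigl(2i\sin((\theta_l-\theta_j)/2)\bigr).
\]
Hence the chord has direction angle $(\theta_j+\theta_l)/2+\pi/2$ modulo $\pi$. This depends only on $j+l \bmod 2k$, because shifting $j+l$ by $2k$ changes the angle by $\pi$.

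Now fix a residue $s\in\mathbb Z/2k\mathbb Z$. Each $j$ has a unique partner $l=s-j$, and $l\neq j$ exactly when $2j\not\equiv s$.

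We take $\mathcal D$ to consist of the $k$ directions indexed by the odd residues $s$, namely the angles $\pi s/(2k)+\pi/2$ for $s=1,3,\dots,2k-1$. These are $k$ distinct elements of $\RP^1$, since distinct $s\in\{1,3,\dots,2k-1\}$ give angles differing by less than $\pi$.

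For odd $s$, the congruence $2j\equiv s \pmod{2k}$ has no solution. So every $z_j$ pairs with the distinct point $z_{s-j}$, and the line through them is a secant of $Z$ in direction $s$. This holds for all $k$ odd residues, which proves the lemma.

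The only real subtlety is this parity issue. Even residues $s$ give directions in which two of the vertices, those with $2j\equiv s$, lie on no secant of that direction. We avoid this by choosing only odd $s$, and exactly $k$ odd residues remain, matching $|\mathcal D|=k$. The case $k=1$ is consistent: two antipodal points and their one secant direction.
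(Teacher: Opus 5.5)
Your proposal is correct and is essentially the paper's proof: you use the same regular $2k$-gon, the same fixed-point-free involution $j\mapsto s-j$ for odd residues $s$, and the same chord computation showing that the secant through $z_j$ and $z_{s-j}$ has direction angle $\pi s/(2k)+\pi/2$. You also verify explicitly that the $k$ odd residues give distinct unoriented directions, because the angles lie in an interval of length less than $\pi$; the paper only asserts this.
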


\begin{proof}
Let
\[
z_i=\left(\cos\frac{\pi i}{k},\sin\frac{\pi i}{k}\right),
\qquad i\in\mathbb Z/(2k)\mathbb Z,
\]
be the vertices of a regular $2k$-gon. For every odd residue $s$, let
\[
u_s=\left(-\sin\frac{\pi s}{2k},\cos\frac{\pi s}{2k}\right).
\]
The involution $i\mapsto s-i$ has no fixed point, and
\[
z_{s-i}-z_i
=2\sin\frac{\pi(s-2i)}{2k}\,u_s.
\]
The scalar is nonzero because $s-2i$ is odd. Thus every $z_i$ has a partner in direction $[u_s]$. The $k$ odd residues give distinct unoriented directions, so
\[
\mathcal D=\{[u_s]:s\text{ odd}\},
\qquad
Z=\{z_i:i\in\mathbb Z/(2k)\mathbb Z\}.
\]
\end{proof}

The second gadget lemma takes a given set $Q$ (such as the regular polygon) and shows that one can generate $m$ homothetic copies of $Q$ (obtained from $Q$ by scaling and shifting) in such a way that, for each point, the directions from that point to points in \emph{other} copies of $Q$ are unique. Thus, in the union of all copies, each point lies on secants satisfying the direction requirement of $Q$ and also lies on at least $(m-1)|Q|$ `private' cross-copy secant directions (ones not shared by other secants connecting points in two copies). In addition, we require that the resulting set of private directions avoid an arbitrary `forbidden' set $\mathcal F$.

For a nonzero vector $w\in\R^2$, we write $[w]\in\RP^1$ for its projective class, or equivalently its unoriented direction.

\begin{lem}[Generic homothetic copies]\label{lem:generic-homothetic}
Let $Q\subset\R^2$ and $\mathcal F\subset\RP^1$ be finite. For every $m\geq 1$, there are pairwise disjoint homothetic copies $Q_1,\ldots,Q_m$ of $Q$ such that all secants with endpoints in distinct copies have pairwise distinct directions outside $\mathcal F$. More formally, if
\[
\mathcal W
=
\bigl\{[x-y]:x\in Q_h,\ y\in Q_k,\ 1\leq h<k\leq m\bigr\},
\]
then
\[
|\mathcal W|=\binom{m}{2}|Q|^2
\qquad\text{and}\qquad
\mathcal W\cap\mathcal F=\emptyset.
\]
\end{lem}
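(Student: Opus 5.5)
The plan is to choose the copies one at a time, $Q_k=\lambda_kQ+t_k$ with $\lambda_k>0$ and $t_k\in\R^2$, and to show that at each step the set of bad parameters is a proper algebraic subset (a finite union of lower-dimensional sets). A generic choice then works. Throughout, the cross-copy difference vectors are
\[
x-y=\lambda_h q+t_h-\lambda_k q'-t_k,\qquad q,q'\in Q.
\]
We could fix $\lambda_k=1$ for all $k$ (translates are homothetic copies) and vary only the translations. However, a translate creates coincidences: the two pairs $(q,q')$ and $(q',q)$ give directions $t_k-t_h+q-q'$ and $t_k-t_h+q'-q$, which are generically distinct, so translates alone might still suffice. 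To get enough freedom I would nonetheless allow scalings too. I would use induction on $m$, the case $m=1$ being trivial since $\mathcal W=\emptyset$.

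\textbf{Inductive step.} Suppose $Q_1,\dots,Q_{m-1}$ have been chosen with the required property. Let $\mathcal W_{m-1}$ be their cross directions, and put $\mathcal F'=\mathcal F\cup\mathcal W_{m-1}$. We now choose $Q_m=\lambda Q+t$ with parameters $(\lambda,t)\in\R_{>0}\times\R^2$, and the new difference vectors are
\[
v_{h,q,q'}(\lambda,t)=\lambda q+t-(\lambda_h q'+t_h),\qquad h<m,\ q,q'\in Q.
\]
We must rule out four failures:
\begin{itemize}
\item a vector $v$ is zero, which is exactly the condition for disjointness;
\item $[v]$ lies in the finite set $\mathcal F'$;
\item two distinct new vectors are parallel;
\item no new vector is parallel to an old one, which is already covered by $\mathcal F'$.
\end{itemize}
Each condition is polynomial in $(\lambda,t)$: $v=0$ is two linear equations, $[v]=[w]$ for fixed $w$ is one linear equation $\det(v,w)=0$, and parallelism of two new vectors is the quadratic equation $\det(v_{h,q,q'},v_{h',r,r'})=0$.

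\textbf{Nontriviality.} It suffices to show that each of these polynomials is not identically zero on the parameter space; then the finite union of their zero sets is nowhere dense and a valid $(\lambda,t)$ exists. For the first two kinds this is clear, since $v=t+(\text{stuff independent of }t)$ and varying $t$ moves $v$ freely. The real work is the parallelism condition $\det(\lambda q+t-c,\ \lambda r+t-c')\not\equiv0$, where $c=\lambda_hq'+t_h$ and $c'=\lambda_{h'}r'+t_{h'}$ are fixed points, distinct whenever the index pairs differ. Setting $\lambda=0$ (permissible, since a polynomial vanishing on the open set $\lambda>0$ vanishes identically) gives $\det(t-c,t-c')$. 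As a function of $t$ this equals $\det(t,c'-c)$ minus terms... more precisely $\det(t-c,t-c')=\det(t-c,c-c')$, which is a nonzero linear function of $t$ when $c\neq c'$. So the only obstacle is the case $c=c'$, which forces $h=h'$ and $q'=r'$ by the inductive hypothesis (copies are disjoint, or equal for $h=h'$), and then $q\neq r$. In that case $\det(\lambda q+t-c,\lambda r+t-c)=\lambda\det(t-c,\,r-q)+\lambda^2\det(q,r)$, whose $\lambda$-linear term is nonzero for generic $t$ because $r\neq q$.

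\textbf{Main obstacle.} The step I expect to require the most care is exactly this case analysis: showing that distinct index triples give genuinely non-parallel generic vectors. I also need to check the bookkeeping that unordered pairs $\{x,y\}$ are counted once, which is why $|\mathcal W|=\binom m2|Q|^2$ exactly. The count holds because each of the $\binom m2|Q|^2$ cross pairs has been shown to yield a distinct direction, the new ones being distinct from each other and from the old ones. The remaining steps are routine genericity arguments.
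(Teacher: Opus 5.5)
Your proof is correct and is essentially the paper's argument: every bad event (collision, forbidden direction, coinciding cross-copy directions) is the zero set of a polynomial in the scaling and translation parameters that is not identically zero. You add one copy at a time, absorbing old directions into $\mathcal F\cup\mathcal W_{m-1}$ and splitting on whether two new secants share their old endpoint, whereas the paper chooses all $t_h,\lambda_h$ at once and splits on whether the two secants join the same pair of copies.

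One correction to your opening aside: translates alone do not suffice when $|Q|\geq 2$. Indeed, $(t_k+q)-(t_h+q)=t_k-t_h$ for every $q\in Q$, so several cross secants share a direction. This is exactly why the scaling parameter, used in your evaluation at $\lambda=0$, is needed.
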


\begin{proof}
Write $Q=\{v_1,\ldots,v_s\}$ and seek copies of the form
\[
x_{h,i}=t_h+\lambda_h v_i,
\qquad
Q_h=\{x_{h,1},\ldots,x_{h,s}\},
\]
where $t_h\in\R^2$ and $\lambda_h\neq 0$. For a cross-copy secant indexed by $e=(h,i;k,j)$, with $h<k$, put
\[
w_e=x_{k,j}-x_{h,i}=(t_k-t_h)+\lambda_k v_j-\lambda_h v_i.
\]

We will now show that one can express each collision, forbidden direction, or equality between two cross-copy directions as the zero set of a nonzero polynomial in the parameters $t_h,\lambda_h$. A collision between different copies is governed by the nonzero polynomial $\lVert x_{h,i}-x_{k,j}\rVert^2$. For $[u]\in\mathcal F$, the condition that $w_e$ has direction $[u]$ is $\det(w_e,u)=0$, again a nonzero polynomial because $t_k-t_h$ varies freely.

It remains to compare two distinct secants $w_e,w_f$ so that $e$ connects the two copies $h<k$ and $f$ connects the two copies $h'<k'$. If the two pairs are different ($h\neq h'$ or $k\neq k'$), then the two translation differences can be varied independently, and so $\det(w_e,w_f)$ is not identically zero.

If both secants join the same copies ($h=h'$ and $k=k'$), write
\[
w_e=T+A,
\qquad
w_f=T+A',
\qquad T=t_k-t_h.
\]
Then
\[
A'-A
=\lambda_k(v_{j'}-v_j)-\lambda_h(v_{i'}-v_i).
\]
Since the secants are distinct, at least one of the differences $(v_{j'}-v_j)$ or $(v_{i'}-v_i)$ is nonzero. Hence, we can choose $\lambda_k,\lambda_h$ for which $A'-A$ is nonzero and, for this choice, there will always be shifts $t_k,t_h$ that make $\det(T+A,T+A')$ nonzero.

Since there are only finitely many nontrivial polynomial conditions (including $\lambda_h \neq 0$ for all $h$), we may choose parameters outside their union. The copies are then disjoint, all cross-copy directions avoid $\mathcal F$, and these directions are pairwise distinct.
\end{proof}

\subsection{Basic book construction using sunflowers}

The constructions to follow all use the same geometric book construction. Its spine is a projective line $L$, and its pages are projective planes $\Pi_1,\ldots,\Pi_q$ that meet pairwise exactly in $L$ and together span $\RP^{q+1}$. This can be described in homogeneous coordinates $[x_0:x_1:z_1:\cdots:z_q]$ on $\RP^{q+1}$ by setting
\[
L=\{z_1=\cdots=z_q=0\},
\qquad
\Pi_i=\{z_j=0\text{ for every }j\neq i\}.
\]
Then each $\Pi_i$ is a projective plane, $\Pi_i\cap\Pi_j=L$ for $i\neq j$, and the pages span $\RP^{q+1}$. For $q=0$, the book consists only of its spine.

Recall that a family $(A_x)_{x\in X}$ is a \emph{sunflower} with core $K$ if $A_x=K\sqcup P_x$ for pairwise disjoint petals $P_x$. The next lemma is our first main geometric construction (used in the proofs of Theorems~\ref{thm:main-bounded-degree} and~\ref{thm:main-sparse}). It takes as input a graph $J$ with an independent set $X$ such that the neighborhoods of the vertices $x\in X$ have size at most $D$ and form a sunflower. The lemma then proves a lower bound on $\SGdim(J)$ by placing the vertices outside $X$ on the spine $L$ of the book and the vertices of $X$ on $\Omega(|X|/D)$ independent pages. The block of vertices belonging to a single page will be constructed using the gadgets of Lemmas~\ref{lem:generic-homothetic} and~\ref{lem:polygon-directions}, where the spine $L$ represents the line at infinity. In this way, the secants in a given direction correspond to special lines, as long as a vertex of $J\setminus X$ is placed on the spine at the point corresponding to that direction. We refer to this lemma as the `free-core sunflower book' lemma because we are allowed to place the sunflower core (a set of vertices in $J\setminus X$) anywhere on the spine $L$. We choose these positions to correspond to the directions occurring in Lemma~\ref{lem:polygon-directions} (the regular polygon of the appropriate size). Later we will encounter situations in which the core positions are predetermined (see Lemma~\ref{lem:fixed-core-page}), which leads to worse parameters.

Figure~\ref{fig:free-core-book} illustrates the construction underlying the proof of Lemma~\ref{lem:free-core-book}.

\begin{lem}[Free-core sunflower book]\label{lem:free-core-book}
Let $J$ be a graph on at least three vertices and let $X\subseteq V(J)$ be independent. Suppose that $(N_J(x))_{x\in X}$ is a sunflower and $d_J(x)\leq D$ for $x\in X$, where $D\geq 1$. Then
\[
\SGdim(J)>\frac{|X|}{10D}.
\]
\end{lem}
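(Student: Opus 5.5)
The plan is to realize $J$ in the book of Section~\ref{sec:basic-book}. Every vertex of $V(J)\setminus X$ goes on the spine $L$, and the vertices of $X$ are split into blocks, one block per page $\Pi_i$, placed off the spine. Since $X$ is independent, every edge of $J$ either joins two spine vertices or joins some $x\in X$ to a spine vertex. Edges of the first kind are special as soon as $L$ carries at least three points. For the second kind, identify $L$ with the line at infinity of each page. For this, fix once and for all an identification of $L$ with $\RP^1$, and in each page choose an affine chart $\Pi_i\setminus L\cong\R^2$ compatible with it, so that a direction $[w]\in\RP^1$ in any page corresponds to the same point of $L$. Then the edge $xv$ is special whenever $v$ sits at the point of $L$ given by the direction of a secant $xy$ inside the block of $x$; the third point on the line is $y$.

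Write $N_J(x)=K\sqcup P_x$ with $|K|=a$ and $|P_x|\le D-a$. For each block I will use Lemma~\ref{lem:generic-homothetic} with a suitable base set $Q$ and forbidden set $\mathcal F$.
\begin{itemize}
\item If $a\ge1$, take $Q=Z$, the regular $2a$-gon of Lemma~\ref{lem:polygon-directions}, with direction set $\mathcal D$, $|\mathcal D|=a$. Use $m=\lceil D/(2a)\rceil+1$ copies. Each point then lies on secants in all directions of $\mathcal D$, because homothety preserves directions. It also lies on $(m-1)2a\ge D$ cross-copy directions, which Lemma~\ref{lem:generic-homothetic} makes private.
\item If $a=0$, take $Q$ to be a single point and $m=D+1$ copies.
\end{itemize}
In both cases the block size $s=|Q|m$ is at most $2a+D+2a\le 5D$.

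Place the core vertices $K$ at the $a$ points of $L$ corresponding to $\mathcal D$; this is common to all pages. For each block, assign the petal $P_x$ of each $x$ injectively to private directions of $x$. Then place the remaining vertices of $V(J)\setminus X$ at further distinct points of $L$.

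Distinctness of all points is the step requiring care. I will build the pages one at a time. For each page, the forbidden set $\mathcal F$ passed to Lemma~\ref{lem:generic-homothetic} consists of $\mathcal D$ together with every spine point already used. Within a block, private directions are pairwise distinct, and a private direction is assigned to only one of its two endpoints. Together with the disjointness of the petals, this makes all spine positions distinct. Points on different pages are distinct because $\Pi_i\cap\Pi_j=L$. Every edge $xv$ is now special: if $v\in K$ the third point comes from the polygon secant, and if $v\in P_x$ it comes from the private secant.

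Finally the count. Let $q=\lfloor |X|/s\rfloor$, use $q$ blocks, and put the leftover vertices of $X$ on the spine as well. Their edges then lie inside $L$, which is fine by independence. The dimension is $q+1\ge |X|/s>|X|/(10D)$.

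The degenerate issue is ensuring $L$ has at least three points. If the spine vertices number fewer than three, I sacrifice one block by moving it to the spine; this leaves $q\ge|X|/s-2$ pages. Alternatively I use $s\le 5D$ more carefully, noting that in this case $|X|\ge|V(J)|-2$ is large relative to $s$, and check that the bound $q+1>|X|/(10D)$ still survives. When $X$ is small, the trivial collinear realization gives $\SGdim(J)\ge1>|X|/(10D)$.

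I expect this small-case bookkeeping, and checking that the constant $10$ absorbs the losses, to be the only fiddly part. The geometry itself is supplied by the two gadget lemmas.
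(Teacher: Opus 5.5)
Your architecture matches the paper's: $V(J)\setminus X$ on the spine, the core at the polygon directions, and one page per block built from Lemma~\ref{lem:generic-homothetic} with a growing forbidden set. However, you overcount the supply of private directions by a factor of two. With your choice of $m$ the petals cannot be placed.

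Each $v\in P_x$ must sit at the point of $L$ given by the direction of some secant of the block through $x$. Distinct petal vertices need distinct points of $L$, so, as you note yourself, a cross-copy secant can serve only one of its endpoints. The usable supply at $x$ is therefore not the $(m-1)|Q|$ secants through $x$. It is only the secants charged to $x$, which average $(m-1)|Q|/2$ per vertex, roughly $D/2$ with your parameters. Meanwhile $|P_x|$ can be as large as $D-a$. Two concrete failures:
\begin{itemize}
\item For $a=0$, take $J$ a disjoint union of stars $K_{1,D}$ with $X$ the centers. Your block is $D+1$ points determining only $\binom{D+1}{2}$ directions, against $(D+1)D$ petal vertices.
\item For $a=1$, $D=4$, you take $m=3$ copies of a $2$-gon. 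That gives $12$ cross-copy directions (the internal chords all point at the core vertex) against up to $6\cdot 3=18$ petal vertices.
\end{itemize}
You also never specify which endpoint receives each secant. That missing orientation step is exactly where the shortfall would have shown up.

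The repair is the paper's choice of parameters. With $r=\max\{1,a\}$, use $m=1+\lceil (D-a)/r\rceil$ copies of the regular $2r$-gon. Every vertex then has even degree $2r(m-1)$ in the complete $m$-partite graph on the copies. Orienting along an Euler tour gives every $x$ out-degree $r(m-1)\geq D-a\geq |P_x|$. Assign $P_x$ injectively to the out-edges at $x$, so no secant is used twice. For $a=0$ you could equally use $2D+1$ single points and an Euler orientation of $K_{2D+1}$, as in the proof overview.

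The block size becomes $2rm\leq 4D$, so your final counting still gives $\SGdim(J)>|X|/(10D)$. The paper handles the ``at least three spine points'' issue more cleanly: it reserves three vertices of $X$ up front by taking $q=\lfloor (|X|-3)/b\rfloor$ blocks, and treats $|X|<10D$ with the collinear realization.
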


\begin{figure}[t]
\centering
\begin{tikzpicture}[x=.61cm,y=.78cm,line cap=round,line join=round]
  \draw[rounded corners=16pt,fill=gray!4,draw=gray!65] (-11.9,-2.25) rectangle (-6.0,2.25);
  \node at (-10.25,1.72) {$V(J)\setminus X$};
  \draw[rounded corners=8pt,fill=gray!4,draw=gray!65] (-5.25,-2.25) rectangle (-1.05,2.25);
  \node at (-3.15,1.86) {$X$};
  \node at (-7.25,2.75) {$N_J(x)=K\sqcup P_x$};

  \coordinate (kone) at (-10.45,.18);
  \coordinate (ktwo) at (-9.95,-.18);
  \coordinate (pone) at (-7.35,1.25);
  \coordinate (ptwo) at (-7.35,.05);
  \coordinate (pq) at (-7.35,-1.25);
  \coordinate (ponea) at (-7.55,1.31);
  \coordinate (poneb) at (-7.18,1.19);
  \coordinate (ptwoa) at (-7.55,.11);
  \coordinate (ptwob) at (-7.18,-.01);
  \coordinate (pqa) at (-7.55,-1.19);
  \coordinate (pqb) at (-7.18,-1.31);
  \coordinate (xone) at (-3.35,1.15);
  \coordinate (xtwo) at (-3.35,.05);
  \coordinate (xq) at (-3.35,-1.35);

  \foreach \u in {xone,xtwo,xq} {
    \draw[gray!55] (kone)--(\u);
    \draw[gray!55] (ktwo)--(\u);
  }
  \foreach \p in {pone,ptwo,pq} {
    \draw[fill=white,draw=gray!70] (\p) ellipse (.78 and .50);
  }
  \foreach \p in {ponea,poneb} \draw[gray!70] (\p)--(xone);
  \foreach \p in {ptwoa,ptwob} \draw[gray!70] (\p)--(xtwo);
  \foreach \p in {pqa,pqb} \draw[gray!70] (\p)--(xq);

  \draw[fill=gray!25,draw=gray!70] (-10.2,0) ellipse (.82 and .58);
  \fill (kone) circle (1.6pt);
  \fill (ktwo) circle (1.6pt);
  \node[left] at (-10.95,0) {$K$};
  \foreach \p in {ponea,poneb,ptwoa,ptwob,pqa,pqb} \fill (\p) circle (1.6pt);
  \node[left] at (-8.22,1.25) {$P_{x_1}$};
  \node[left] at (-8.22,.05) {$P_{x_2}$};
  \node[left] at (-8.22,-1.25) {$P_{x_q}$};

  \foreach \y/\lab/\xname in {1.15/$X_1$/xone,.05/$X_2$/xtwo,-1.35/$X_q$/xq} {
    \draw[fill=gray!8,draw=gray!65] (-3.15,\y) ellipse (1.28 and .38);
    \fill (\xname) circle (1.8pt);
    \fill (-2.78,\y) circle (1.35pt);
    \fill (-2.42,\y) circle (1.35pt);
    \node[right] at (-1.78,\y) {\lab};
  }
  \node at (-3.15,-.64) {$\vdots$};

  \draw[gray!45] (0,-2.75)--(0,2.75);
  \fill[gray!5] (1.2,-2.15)--(1.2,2.15)--(11.75,1.8)--(11.75,-1.8)--cycle;
  \draw[gray!65] (1.2,-2.15)--(1.2,2.15)--(11.75,1.8)--(11.75,-1.8)--cycle;
  \draw[very thick] (1.2,-2.45)--(1.2,2.45);
  \node[left] at (1.1,2.36) {$L$};
  \node at (11.2,2.08) {$\Pi_i$};

  \coordinate (xx) at (3.8,.7);
  \coordinate (coreone) at (1.2,1.814);
  \coordinate (coretwo) at (1.2,-.414);
  \coordinate (petalone) at (1.2,.44);
  \coordinate (petaltwo) at (1.2,1.23);

  \draw[thick,gray!75] (coreone)--(4.85,.25);
  \draw[thick,gray!75] (coretwo)--(4.85,1.15);
  \draw[densely dashed,gray!65] (petalone)--(7.3,1.05);
  \draw[densely dashed,gray!65] (petaltwo)--(7.75,-.1);
  \node[gray!70] at (5.05,1.72) {core directions};
  \node[gray!70] at (6.0,-1.48) {private directions};

  \fill (coreone) circle (2.2pt) node[left] {$k_1$};
  \fill (coretwo) circle (2.2pt) node[left] {$k_2$};
  \draw[fill=white] (petalone) circle (2.2pt) node[left] {$p_y$};
  \draw[fill=white] (petaltwo) circle (2.2pt) node[left] {$p_{y'}$};

  \draw[fill=white,draw=gray!75] (3.8,.7)--(4.15,1.15)--(4.85,1.15)--(5.2,.7)--(4.85,.25)--(4.15,.25)--cycle;
  \foreach \p in {(3.8,.7),(4.15,1.15),(4.85,1.15),(5.2,.7),(4.85,.25),(4.15,.25)} \fill \p circle (1.45pt);
  \fill (xx) circle (2.1pt) node[above left] {$x$};
  \node at (4.5,-.08) {$Q_1$};

  \draw[fill=white,draw=gray!75] (7.3,1.05)--(7.65,1.5)--(8.35,1.5)--(8.7,1.05)--(8.35,.6)--(7.65,.6)--cycle;
  \foreach \p in {(7.3,1.05),(7.65,1.5),(8.35,1.5),(8.7,1.05),(8.35,.6),(7.65,.6)} \fill \p circle (1.45pt);
  \node[right] at (8.82,1.42) {$Q_2$};

  \node at (8.05,.25) {$\vdots$};
  \draw[fill=white,draw=gray!75] (7.4,-.55)--(7.75,-.1)--(8.45,-.1)--(8.8,-.55)--(8.45,-1)--(7.75,-1)--cycle;
  \foreach \p in {(7.4,-.55),(7.75,-.1),(8.45,-.1),(8.8,-.55),(8.45,-1),(7.75,-1)} \fill \p circle (1.45pt);
  \node[right] at (8.82,-.78) {$Q_m$};

  \node at (-6.45,-2.78) {(a)};
  \node at (6.45,-2.78) {(b)};
\end{tikzpicture}
\caption{The free-core sunflower book of Lemma~\ref{lem:free-core-book}. (a) The graph $J$ and its independent set $X$. The neighborhoods $N_J(x)=K\sqcup P_x$, $x\in X$, form a sunflower with core $K$, and $X$ is partitioned into blocks $X_1,\ldots,X_q$. (b) The placement of one block $X_i$ in the page $\Pi_i$. The sets $Q_1,\ldots,Q_m$ are homothetic copies of a regular $2r$-gon $Q$, whose distinguished directions are indexed by the core vertices placed on the line at infinity $L$. Internal secants give every point of $X_i$ a special line in every core direction. Each point also has $(m-1)|Q|$ private cross-copy directions; after orienting the cross-copy secants as in the proof, at least $r(m-1)\geq|P_x|$ are assigned to $x$. Placing the vertices of $P_x$ at the corresponding points of $L$ makes these lines special.}
\label{fig:free-core-book}
\end{figure}

\begin{proof}
If $|X|<10D$, the collinear realization gives $\SGdim(J)\geq 1>|X|/(10D)$. We may therefore assume that $|X|\geq 10D$. Let $K$ be the core and write
\begin{equation}
\label{eq:free-core-sunflower}
N_J(x)=K\sqcup P_x
\qquad(x\in X).
\end{equation}
The petals are pairwise disjoint, and independence gives $X\cap(K\cup P_x)=\emptyset$. By the degree bound,
\begin{equation}
\label{eq:free-core-petal-bound}
|K|\leq D,\qquad
|P_x|\leq D-|K|.
\end{equation}
Set
\begin{equation}
\label{eq:free-core-parameters}
a=|K|,\qquad r=\max\{1,a\},\qquad d=D-a,
\qquad m=1+\left\lceil\frac dr\right\rceil,\qquad b=2rm.
\end{equation}
Using~\eqref{eq:free-core-parameters}, if $a=0$, then $b=2(D+1)\leq 4D$. If $a\geq 1$, then
\[
b\leq 2a\left(2+\frac{D-a}{a}\right)=2D+2a\leq 4D.
\]
Thus, in both cases we have
\begin{equation}
\label{eq:free-core-block-size}
b\leq 4D.
\end{equation}
Put
\begin{equation}
\label{eq:free-core-page-count}
q=\left\lfloor\frac{|X|-3}{b}\right\rfloor.
\end{equation}
Then $q\geq 1$. Choose disjoint blocks
\[
X_1,\ldots,X_q,
\qquad |X_j|=b.
\]
At least three vertices of $J$ remain outside their union.

Take a $q$-page book with spine $L$ and apply Lemma~\ref{lem:polygon-directions} with $k=r$ to obtain a set $Z$ of $2r$ affine points and a set $\mathcal D$ of $r$ directions. Identify the line at infinity of this affine model with $L$, so that the directions in $\mathcal D$ become points of $L$. Assign the points of $\mathcal D$ bijectively to $K$ (if $a=0$ skip this step) and place $k\in K$ at the corresponding point $q_k\in L$.

For each $j$, partition $X_j$ into $m$ parts of size $2r$, each identified with a copy of $Z$. Suppose first that $m \geq 2$ and  let $H_j$ be the complete $m$-partite graph on the $m$ parts comprising $X_j$. Every vertex has even degree $2r(m-1)$. Orient an Euler tour in each nontrivial component of $H_j$; by~\eqref{eq:free-core-parameters}, every vertex then has outdegree
\[
r(m-1)\geq d.
\]
By~\eqref{eq:free-core-petal-bound} and~\eqref{eq:free-core-parameters}, we may assign $P_x$ injectively to the outgoing edges at $x$. Since each edge has only one tail, no edge receives two assignments.  If $m=1$, then $d=0$ and there is nothing to assign.

\begin{claim}\label{claim:free-core-page}
Let $F\subset L$ be finite and contain the points $q_k$, $k\in K$. A block $X_j$, together with the petals $P_x$, $x\in X_j$, can be placed in a page so that all points of $X_j$ lie off $L$, all points of $P_x$ lie in $L\setminus F$ for every $x\in X_j$, and every edge connected to  $X_j$  is special.
\end{claim}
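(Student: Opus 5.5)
The plan is to work in the affine chart of the page $\Pi_j$ whose line at infinity is $L$, so that a point of $L$ is the same thing as a direction $[w]\in\RP^1$. I would apply Lemma~\ref{lem:generic-homothetic} to $Q=Z$, with $m$ copies, taking the forbidden set $\mathcal F$ to be $F$ read as a set of directions. This gives disjoint homothetic copies $Q_1,\ldots,Q_m$ of $Z$ in the affine plane $\Pi_j\setminus L$. Every cross-copy secant has its own direction, these directions are pairwise distinct, and none lies in $F$. I identify the $m$ parts of $X_j$ with these copies, using the identification of each part with $Z$ fixed earlier. All points of $X_j$ are then distinct and lie off $L$.

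For the core edges I use that homotheties preserve directions. By Lemma~\ref{lem:polygon-directions}, each point of $Z$ lies on a secant of $Z$ in every direction of $\mathcal D$, so each point $x$ of each copy $Q_h$ lies on a secant of $Q_h$ in each direction $q_k$, $k\in K$. Projectively, the line through $p_x$ and $q_k$ therefore contains a second point of $X_j$ in the same copy, together with $p_k$. So the edge $xk$ lies on a line with at least three points and is special. This holds for every $x\in X_j$ and every $k\in K$, and when $a=0$ there is nothing to check.

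For the petal edges, recall that each $y\in P_x$ was assigned injectively to an outgoing edge $x\to x'$ of $H_j$, where $x$ and $x'$ lie in different parts. I place $p_y$ at the point of $L$ given by the direction $[p_{x'}-p_x]$. This point is not in $F$ because $\mathcal W\cap\mathcal F=\emptyset$. The line through $p_x$ and $p_y$ passes through $p_{x'}$, so the edge $xy$ is special. Distinctness comes from the same lemma. Distinct edges of $H_j$ give distinct directions since $|\mathcal W|=\binom m2|Q|^2$ means there are no coincidences. The assignment is injective over edges because each edge has one tail and the petals are disjoint. So distinct petal vertices receive distinct points of $L\setminus F$. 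Finally, $X$ is independent and every neighbour of $x$ lies in $K\cup P_x$, so every edge meeting $X_j$ is of one of these two types, and each is special.

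The main obstacle is bookkeeping rather than geometry. I must check that nothing collides: the new spine points avoid $F$, which contains the core positions and whatever was placed before, and they are pairwise distinct. I must also check that the page points lie off $L$, which holds automatically in the affine chart. Lemma~\ref{lem:generic-homothetic} supplies exactly these properties once $\mathcal F$ is chosen to be $F$. The one point needing care is $m=1$: then there are no petals and only the core step is needed, with a single copy of $Z$.
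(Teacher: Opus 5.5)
Your proposal is correct and matches the paper's proof. Both apply Lemma~\ref{lem:generic-homothetic} to $Z$ with forbidden set $F$, take the core incidences from the internal chords of each homothetic copy, and place each petal vertex at the point at infinity of its assigned cross-copy secant, with distinctness coming from $|\mathcal W|=\binom m2|Z|^2$ and $\mathcal W\cap F=\emptyset$; your extra bookkeeping (injectivity via unique tails, the $m=1$ case) spells out what the paper leaves implicit.
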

\begin{proof}
Apply Lemma~\ref{lem:generic-homothetic} to get $m$ homothetic copies of $Z$ satisfying the secant conditions of the lemma (all cross-copy secants have distinct directions). Use the resulting copies of $Z$ to place the $m$ parts of $X_j$. Internal chords in every core direction pair the points within each copy. For $y\in P_x$, place $y$ at the point at infinity of the secant assigned to it in the previous step. The assigned secants have distinct directions outside $F$, so these placements are distinct and realize all required incidences.
\end{proof}

We now construct the pages in order, applying Claim~\ref{claim:free-core-page} in page $\Pi_j$ with $F$ consisting of the core points and all points of $P_x$ for $x\in X_1\cup\cdots\cup X_{j-1}$. Thus all page configurations are mutually disjoint except for their common core on $L$.

Place every remaining vertex of $J$ at a distinct unused point of $L$. At least three vertices lie on $L$. An edge with no endpoint in a block $X_i$ is therefore special on $L$. If $x\in X_j$ and $xy\in E(J)$, then~\eqref{eq:free-core-sunflower} gives $y\in K\sqcup P_x$, and the page construction supplies a third point on the line through $x$ and $y$. There are no edges between vertices in $\bigcup_iX_i$ because $X$ is independent. Hence the configuration realizes $J$.

Every page contains at least one point off $L$, so the configuration spans the entire $q$-page book, namely $\RP^{q+1}$. Passing to an affine chart gives
\[
\SGdim(J)\geq q+1.
\]
Finally, using~\eqref{eq:free-core-block-size} and~\eqref{eq:free-core-page-count},
\[
q+1>
\frac{|X|-3}{b}
\geq\frac{|X|-3}{4D}
\geq\frac{|X|}{10D}.
\]
This completes the proof of Lemma~\ref{lem:free-core-book}.
\end{proof}

\begin{obs}[The complete bipartite graph] 
Let $m \geq s$ and let $G = K_{m,s}$ be the complete bipartite graph on vertices $[m] \times [s]$. A direct application of Lemma~\ref{lem:free-core-book} gives $\SGdim(G) \geq \Omega(m/s)$ whereas Theorem~\ref{thm:quantitative-sg} gives an asymptotically matching upper bound.
\end{obs}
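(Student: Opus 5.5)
The observation has two halves, and I would handle them in turn: the lower bound comes directly from Lemma~\ref{lem:free-core-book}, and the upper bound comes from Theorem~\ref{thm:quantitative-sg}.

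For the lower bound, let $X=[m]\times\{\text{side of size } m\}$ be the larger side of the bipartition. This set is independent. Every $x\in X$ has the same neighborhood, namely the entire smaller side $S$ of size $s$. So the family $(N_G(x))_{x\in X}$ is a sunflower with core $K=S$ and all petals $P_x=\emptyset$; the empty petals are trivially pairwise disjoint. Also $d_G(x)=s$ for every $x\in X$, so the degree hypothesis holds with $D=s\geq 1$. The graph has $m+s\geq 3$ vertices as soon as $m\geq 2$; the degenerate case $m=s=1$ can be ignored, since the statement is asymptotic. Lemma~\ref{lem:free-core-book} then gives
\[
\SGdim(G)>\frac{m}{10s}.
\]
Combined with the trivial bound $\SGdim(G)\geq 1$ from a collinear realization, this is $\Omega(m/s)$. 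In this instance the construction is concrete: the core sits at the $s$ polygon directions, the $m$ vertices are split into blocks of $2s$ copies of the regular $2s$-gon, and no private directions are needed.

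For the upper bound, Theorem~\ref{thm:quantitative-sg} requires a minimum-degree condition. In $K_{m,s}$ the minimum degree is $s$, attained on the larger side, since $m\geq s$. Applying the theorem with $D=s$ and $n=m+s\leq 2m$ gives
\[
\SGdim(G)\leq\frac{12(m+s-1)}{s}\leq\frac{24m}{s}=O(m/s).
\]
So the two bounds agree up to the constant factor. There is no real obstacle here; the only points needing care are checking that the sunflower hypothesis holds in the degenerate form of empty petals, and that the minimum degree is governed by the smaller side.
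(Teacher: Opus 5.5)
Your proposal is correct and is exactly the direct application the paper has in mind: the larger side is an independent set whose neighborhoods form a sunflower with core the smaller side and empty petals, so Lemma~\ref{lem:free-core-book} with $D=s$ gives $\SGdim(G)>m/(10s)$. Likewise, the minimum degree $s$ in Theorem~\ref{thm:quantitative-sg} gives $\SGdim(G)\leq 12(m+s-1)/s\leq 24m/s$. One cosmetic slip in your concrete description: since $d_G(x)=|K|=s$, no private directions are needed, so each page carries a single copy of the regular $2s$-gon (blocks of $2s$ vertices), not $2s$ copies.
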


\subsection{Sunflowers with different cores}

For the proof of the  lower bound for minor-free graphs (Theorem~\ref{thm:main-fixed-minor}) we will need to handle a situation when the neighborhoods of the independent set form many different sunflowers with possibly intersecting cores (petals will still be disjoint, even between different sunflowers). To place these on pages, as before, we need a construction in which the core vertices are already placed on the spine in advance (since they may belong to multiple sunflowers). Using the polygon we can get $k$ secant directions (which we are free to choose) through each point using only $2k$ points. To get arbitrary $k$ secant directions through each point we have to use $2^k$ points placed on a projected hypercube. The next lemma describes how this is done in a single page and will be iterated in Lemma~\ref{lem:iterated-fixed-core-book} to get the full book construction. Figure~\ref{fig:fixed-core-page} illustrates the construction.

\begin{lem}[Fixed-core sunflower page]\label{lem:fixed-core-page}
Let $D\geq 1$, $a \geq 0$ be integers, let $J=(V,E)$ be a graph with $I \subset V$ an independent set satisfying:

\[ |V\setminus I|\geq 3, \qquad 
d_J(x)\leq D\,\,\,\,\, \forall x \in I \qquad \text{and} \qquad
|I|=2^a(2D+1).
\]
Suppose that the neighborhoods $\{ N_J(x)$, $x\in I\}$, form a sunflower with core $K$ such that $a = |K|$.
Let $\Pi$ be a real projective plane, let $L\subset\Pi$ be a line, let $A\subset L$ satisfy $|A|=|K|$, and let $\phi\colon K\to A$ be a bijection. Let $F\subset L\setminus A$ be an arbitrary finite set. Then, there is an injective map $
f\colon V\longrightarrow\Pi
$
such that
\begin{enumerate}
\item $f|_K=\phi$;
\item $f(X)\subseteq\Pi\setminus L$ and $f(V\setminus I)\subseteq L$;
\item for every $uv\in E$, the line through $f(u)$ and $f(v)$ contains a third point of $f(V)$;
\item $f(V)\cap F=\emptyset$.
\end{enumerate}
\end{lem}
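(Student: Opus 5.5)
The plan is to reuse the proof of Lemma~\ref{lem:free-core-book}, replacing the regular polygon by a projected hypercube whose secant directions are the prescribed points of $A$. Identify $L$ with the line at infinity of an affine chart $\Pi\setminus L\cong\R^2$. Each point of $A$ is then a direction. Write $K=\{k_1,\ldots,k_a\}$ and choose nonzero vectors $u_1,\ldots,u_a\in\R^2$ with $[u_i]=\phi(k_i)$. These directions are pairwise distinct because $\phi$ is a bijection onto $A$. Write $N_J(x)=K\sqcup P_x$ for $x\in I$; independence of $I$ gives $|P_x|\leq D-a\leq D$.

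\textbf{Step 1: the hypercube gadget.} For scalars $\lambda_1,\ldots,\lambda_a$ put
\[
Z=\Bigl\{z_S=\sum_{i\in S}\lambda_i u_i : S\subseteq[a]\Bigr\}.
\]
For $S\neq S'$, the difference $z_S-z_{S'}$ is a signed sum $\sum_{i\in S\triangle S'}\pm\lambda_i u_i$ over a nonempty index set. As a function of $\lambda$ this is a nonzero linear map, since each $u_i\neq 0$. Hence for generic $\lambda$ the $2^a$ points are distinct. Each $z_S$ and $z_{S\triangle\{i\}}$ differ by $\pm\lambda_i u_i$, so every point of $Z$ lies on a secant of $Z$ in each direction $[u_i]$. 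If $a=0$, then $Z$ is a single point.

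\textbf{Step 2: copies, orientation, and placement of $I$ and the petals.} Apply Lemma~\ref{lem:generic-homothetic} with $Q=Z$, $m=2D+1$, and forbidden set $\mathcal F=F\cup A$. This gives disjoint homothetic copies $Z_1,\ldots,Z_m$ whose cross-copy secant directions are pairwise distinct and avoid $F\cup A$. Homothety preserves directions, so the within-copy chords in the directions $[u_i]$ survive. Place $I$ bijectively on $\bigcup_h Z_h$, which is possible since $|I|=2^a(2D+1)$. Let $H$ be the complete $m$-partite graph on these copies. Every vertex of $H$ has degree $2^a\cdot 2D$, which is even, so orienting an Euler tour in each component gives every vertex outdegree $2^aD\geq D\geq|P_x|$. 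Assign the elements of $P_x$ injectively to the out-edges of $x$. Since each edge has a unique tail, different petal vertices receive different secants. Place each $y\in P_x$ at the point of $L$ given by the direction of its assigned secant. These points are pairwise distinct, and they lie outside $F\cup A$.

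\textbf{Step 3: remaining vertices and verification.} Set $f|_K=\phi$. Place every other vertex of $V\setminus I$ at a distinct point of $L$ outside $F$, outside $A$, and outside the petal points. Then $f$ is injective and conditions 1, 2 and 4 hold. For condition 3, consider the three kinds of edges.
\begin{itemize}
\item Edges with both endpoints in $V\setminus I$ lie on $L$, which contains $|V\setminus I|\geq 3$ points of $f(V)$.
\item An edge $xk_i$ with $x\in I$ lies on the line through $f(x)$ with direction $[u_i]$. The internal hypercube chord supplies a third point of $f(I)$ on this line.
\item An edge $xy$ with $y\in P_x$ lies on the assigned cross-copy secant through $f(x)$. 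The head of that secant is the third point.
\end{itemize}
There are no edges inside $I$, because $I$ is independent.

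\textbf{Main obstacle.} The delicate points are the genericity requirements: the hypercube points must be distinct, and the petal directions must avoid both $F$ and the prescribed core points $A$. Both are handled by the polynomial-avoidance argument already contained in Lemma~\ref{lem:generic-homothetic}, applied with $A$ added to the forbidden set. It remains to check the boundary case $a=0$: then $|I|=2D+1$, each copy is a single point, and the orientation is an Eulerian orientation of $K_{2D+1}$, so this case goes through unchanged.
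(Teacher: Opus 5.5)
Your proposal is correct and follows the paper's proof essentially step by step: a generically scaled hypercube $\{\sum_{i\in S}\lambda_i u_i\}$ in the prescribed directions $\phi(k_i)$, then $2D+1$ homothetic copies from Lemma~\ref{lem:generic-homothetic} with forbidden set $A\cup F$, then an Eulerian orientation of the complete $(2D+1)$-partite graph that assigns each petal to a private cross-copy direction. One small correction: the bound $|P_x|\leq D-a$ comes from $N_J(x)=K\sqcup P_x$ together with the degree bound, not from independence; independence is what guarantees that $K$ and the petals lie outside $I$, so they can be placed on $L$.
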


\begin{figure}[h!]
\centering
\begin{tikzpicture}[x=.72cm,y=.80cm,line cap=round,line join=round]
  \fill[gray!5] (1.5,-2.25)--(1.5,2.25)--(13.9,1.9)--(13.9,-1.9)--cycle;
  \draw[gray!65] (1.5,-2.25)--(1.5,2.25)--(13.9,1.9)--(13.9,-1.9)--cycle;
  \draw[very thick] (1.5,-2.5)--(1.5,2.5);
  \node[left] at (1.4,2.62) {$L$};
  \node at (13.35,2.08) {$\Pi$};
  \node at (8.2,2.55) {$I=I_1\sqcup I_2\sqcup\cdots\sqcup I_m$};

  \coordinate (kone) at (-1.05,.55);
  \coordinate (ktwo) at (-1.05,-.55);
  \coordinate (qone) at (1.5,.4);
  \coordinate (qtwo) at (1.5,-1.76);
  \coordinate (py) at (1.5,-.71);
  \coordinate (pz) at (1.5,1.75);
  \draw[rounded corners=12pt,fill=gray!4,draw=gray!65] (-1.9,-1.35) rectangle (-.2,1.35);
  \node at (-1.05,1.62) {$K$};
  \draw[->,gray!70] (kone)--(qone);
  \draw[->,gray!70] (ktwo)--(qtwo);
  \node at (-.7,2.35) {$K\xrightarrow{\phi}A$};
  \fill (kone) circle (2pt) node[left] {$k_1$};
  \fill (ktwo) circle (2pt) node[left] {$k_2$};
  \fill (qone) circle (2.2pt) node[above left] {$q_1$};
  \fill (qtwo) circle (2.2pt) node[below left] {$q_2$};
  \draw[fill=white] (py) circle (2.2pt) node[below left] {$p_y$};
  \draw[fill=white] (pz) circle (2.2pt) node[left] {$p_z$};

  \coordinate (aone) at (4.2,.4);
  \coordinate (bone) at (5.2,.4);
  \coordinate (cone) at (4.55,.68);
  \coordinate (done) at (5.55,.68);
  \coordinate (atwo) at (7.2,1.0);
  \coordinate (btwo) at (8.2,1.0);
  \coordinate (ctwo) at (7.55,1.28);
  \coordinate (dtwo) at (8.55,1.28);
  \coordinate (am) at (10.4,-.6);
  \coordinate (bm) at (11.4,-.6);
  \coordinate (cm) at (10.75,-.32);
  \coordinate (dm) at (11.75,-.32);
  \draw[thick,fill=white,draw=gray!75] (aone)--(bone)--(done)--(cone)--cycle;
  \draw[thick,fill=white,draw=gray!75] (atwo)--(btwo)--(dtwo)--(ctwo)--cycle;
  \draw[thick,fill=white,draw=gray!75] (am)--(bm)--(dm)--(cm)--cycle;

  \draw[thick,gray!70] (qone)--(bone);
  \draw[thick,gray!70] (qtwo)--(cone);
  \draw[densely dashed,gray!65] (py)--(atwo);
  \draw[densely dashed,gray!65] (pz)--(am);
  \node[gray!70] at (12.0,1.05) {cube directions from $A$};
  \node[gray!70] at (8.25,-1.45) {distinct petal directions};

  \foreach \v in {aone,bone,cone,done,atwo,btwo,ctwo,dtwo,am,bm,cm,dm} \fill (\v) circle (1.9pt);
  \node at (4.9,-.05) {$I_1\cong Q_0$};
  \node at (7.9,.55) {$I_2\cong Q_0$};
  \node at (9.5,.15) {$\cdots$};
  \node at (11.1,-1.05) {$I_m\cong Q_0$};
\end{tikzpicture}
\caption{A diagram showing the construction in Lemma~\ref{lem:fixed-core-page}. The core vertices $K$ are mapped by $\phi$ to an arbitrary set $A\subset L$. Each block $I_i$ is placed on a copy of the cube $Q_0$ whose directions are determined by $A$. Cross-copy edges give distinct directions corresponding to the petals.}
\label{fig:fixed-core-page}
\end{figure}

\begin{proof}
Treat $L$ as the line at infinity of $\Pi$. Write
\[
K=\{k_1,\ldots,k_a\},
\qquad
A=\{q_1,\ldots,q_a\},
\qquad
\phi(k_i)=q_i.
\]
For each $i\in[a]$, let $u_i\in\R^2$ be the unit vector in direction $q_i$. Let
\[
P_x=N_J(x)\setminus K\qquad(x\in I).
\]
The petals $P_x$ are pairwise disjoint and satisfy
\begin{equation}
\label{eq:fixed-core-petal-bound}
|P_x|=d_J(x)-a\leq D-a\leq D.
\end{equation}

\begin{claim}\label{claim:fixed-core-cube}
There is a set $Q_0\subset\R^2$ of $2^a$ points such that every point of $Q_0$ belongs to a secant (of $Q_0$) in each of the directions $q_1,\ldots,q_a$.
\end{claim}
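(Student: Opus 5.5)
The idea is to take $Q_0$ to be an affine projection of the vertex set of the $a$-dimensional cube. Concretely, I will choose generic positive scalars $\lambda_1,\ldots,\lambda_a$ and set
\[
Q_0=\Bigl\{\textstyle\sum_{i=1}^a \epsilon_i\lambda_i u_i:\ \epsilon\in\{0,1\}^a\Bigr\}\subset\R^2 .
\]
If $a=0$, then $Q_0$ is the single point $0$ and there is nothing to prove, so assume $a\geq 1$. For each $\epsilon\in\{0,1\}^a$ and each $i\in[a]$, flipping the $i$-th coordinate of $\epsilon$ gives a partner point $\epsilon\oplus e_i$. The difference of the two images is $\pm\lambda_i u_i$, which has direction $q_i$. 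So each point lies on a secant in direction $q_i$, provided the partner is a \emph{different} point of $Q_0$.

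The key step is therefore to show that the map $\epsilon\mapsto\sum_i\epsilon_i\lambda_iu_i$ is injective for a suitable choice of the $\lambda_i$, so that $|Q_0|=2^a$. Two vectors $\epsilon\neq\epsilon'$ collide exactly when $\sum_i(\epsilon_i-\epsilon'_i)\lambda_iu_i=0$. Each such condition is a linear equation in $(\lambda_1,\ldots,\lambda_a)$, with coefficients $c_i=\epsilon_i-\epsilon'_i\in\{-1,0,1\}$ that are not all zero. I need this equation to be nontrivial, meaning its solution set is a proper subspace of $\R^a$. Pick an index $i_0$ with $c_{i_0}\neq 0$, and set $\lambda_{i_0}=1$ and all other $\lambda_i=0$. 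The sum then becomes $\pm u_{i_0}\neq 0$. So the vanishing locus of each collision condition is a proper linear subspace of $\R^a$.

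There are only finitely many pairs $(\epsilon,\epsilon')$, so finitely many proper subspaces. Their union cannot cover the open set $\{\lambda_i>0\text{ for all }i\}$. Choosing $\lambda$ in that open set and outside the union gives injectivity. This injectivity is the main point where care is needed. Note that the directions $q_i$ are distinct points of $L$, but they may be arbitrary, and the $u_i$ can be linearly dependent (indeed they must be once $a\geq 3$). Hence injectivity is not automatic and genuinely requires the genericity of $\lambda$. With injectivity in hand, each $\epsilon\oplus e_i$ is a distinct point of $Q_0$, so every point of $Q_0$ lies on a secant in each of the directions $q_1,\ldots,q_a$, which proves the claim.

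Looking ahead to the rest of Lemma~\ref{lem:fixed-core-page}, I plan to use $Q_0$ as follows. Split $I$ into $m=2D+1$ parts of size $2^a$. Apply Lemma~\ref{lem:generic-homothetic} to $Q_0$, with the forbidden set $\mathcal F=A\cup F$, to get $m$ homothetic copies. Homothety preserves secant directions, so every point still has its core secants in each direction $q_i$. Then orient the complete $m$-partite graph on these parts by Euler tours, exactly as in the proof of Lemma~\ref{lem:free-core-book}. This gives each point outdegree $2^{a-1}(m-1)\geq D\geq|P_x|$ (for $a=0$, orient $K_{2D+1}$ instead). Finally, place each petal vertex at the distinct point at infinity of its assigned cross-copy secant.
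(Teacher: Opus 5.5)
Your proof is correct and is essentially the paper's argument. You take $Q_0=\{\sum_i\varepsilon_i\lambda_iu_i\}$ with scalars chosen outside finitely many proper linear subspaces of $\R^a$, so the $2^a$ points are distinct, and flipping the $i$-th coordinate gives each point a partner in direction $q_i$. One small point: each collision condition is a vector equation in $\R^2$, so two linear equations rather than one, but your properness check via $\lambda=e_{i_0}$ already covers this.
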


\begin{proof}
For $a=0$, take $Q_0=\{0\}$. Otherwise choose nonzero scalars $\lambda_1,\ldots,\lambda_a$ so that the points
\[
v_\varepsilon
=\sum_{i=1}^a\varepsilon_i\lambda_i u_i,
\qquad \varepsilon\in\{0,1\}^a,
\]
are distinct. The excluded choices lie in finitely many proper linear subspaces of $\R^a$. Flipping the $i$-th coordinate changes $v_\varepsilon$ by $\pm\lambda_i u_i$, and hence pairs the cube points in direction $q_i$. Take $Q_0=\{v_\varepsilon:\varepsilon\in\{0,1\}^a\}$.
\end{proof}

Put
\[
s=|Q_0|=2^a,
\qquad m=2D+1.
\]
Since $|I|=2^a(2D+1)=ms$, we can partition $I$ into $m$ parts $I_1,\ldots,I_m$ of size $s$, each identified with a copy of $Q_0$. Let $H$ be the complete $m$-partite graph on these $m$ copies. Every vertex has even degree
\[
s(m-1)=2sD.
\]
 Orient an Euler tour of $H$; every vertex then has outdegree $sD\geq D$ and, using ~\eqref{eq:fixed-core-petal-bound} can assign $P_x$ injectively to the outgoing edges at $x$. No edge receives two assignments.

Apply Lemma~\ref{lem:generic-homothetic} with $
Q=Q_0$ and $\cal F=A\cup F$ and use the resulting homothetic copies to place the $m$ parts $I_1,\ldots,I_m$ of $I$. If an outgoing edge $xz$ is assigned to $y\in P_x$, place $y$ at the point at infinity $xz\cap L$. Lemma~\ref{lem:generic-homothetic} guarantees that all points of $P_x$, $x\in I$, are distinct and avoid $A\cup F$. The cube pairings give, for every $x\in I$ and $k_i\in K$, a second point of $I$ on the line through $x$ and $q_i$.

Place every remaining vertex at a distinct unused point of $L\setminus(A\cup F)$. This defines an injective map $f$ avoiding $F$. Since $|V\setminus I|\geq 3$, every edge with both endpoints outside $I$ is special on $L$. If $xy\in E(J)$ with $x\in I$, then
\[
y\in N_J(x)=K\sqcup P_x,
\]
and the construction supplies a third point on the line through $f(x)$ and $f(y)$. There are no edges inside $I$. Thus every edge is special. This completes the proof of Lemma~\ref{lem:fixed-core-page}.
\end{proof}

The next lemma iterates Lemma~\ref{lem:fixed-core-page} and is used in the proof of Theorem~\ref{thm:main-fixed-minor}. Its input is an independent set $X$ partitioned into $p$ blocks whose neighborhoods form sunflowers. The cores may intersect, but all petals are pairwise disjoint and avoid every core. We place the union of the cores on the spine and construct one page for each block.

\begin{lem}[Intersecting core book]
\label{lem:iterated-fixed-core-book}
Let $D\geq 0$ and $p\geq 1$ be integers, let $J=(V,E)$ be a graph, and let $X=X_1\sqcup\cdots\sqcup X_p\subseteq V$ be an independent set partitioned into $p$ blocks. For every $i\in[p]$, suppose that the neighborhoods $N_J(x)$, $x\in X_i$, form a sunflower with core $K_i$, and for $x\in X_i$ write $P_x=N_J(x)\setminus K_i$. Suppose that
\[
d_J(x)\leq D\qquad(x\in X),
\qquad
|X_i|\geq 2^D(2D+1)+3\qquad(i\in[p]),
\]
and that the sets $P_x$, $x\in X$, are pairwise disjoint and avoid every core $K_i$. Then
\[
\SGdim(J)\geq p+1.
\]
\end{lem}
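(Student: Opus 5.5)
We will build a $p$-page book with spine $L$ and pages $\Pi_1,\ldots,\Pi_p$ in $\RP^{p+1}$, and fill each page by one application of Lemma~\ref{lem:fixed-core-page}. All vertices outside $X$ go on $L$. The core union $K_1\cup\cdots\cup K_p$ is fixed on $L$ once and for all, and each page uses this fixed placement as its $\phi$.

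\textbf{Spine.} Choose an injective map $g\colon K_1\cup\cdots\cup K_p\to L$. For block $i$ put $a_i=|K_i|\leq D$ (any $x\in X_i$ has $K_i\subseteq N_J(x)$). Let $A_i=g(K_i)$ and $\phi_i=g|_{K_i}$.

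\textbf{Pages.} Lemma~\ref{lem:fixed-core-page} needs exactly $|I|=2^{a_i}(2D+1)$. Since $|X_i|\geq 2^D(2D+1)+3$, we can choose $I_i\subseteq X_i$ of this size. The remaining at least $3$ vertices of $X_i$ must also be handled, since their edges must still be special. The simplest fix is to enlarge the construction: partition $X_i$ into pieces of size $2^{a_i}(2D+1)$ plus a remainder. Alternatively, note that the Euler-tour argument in Lemma~\ref{lem:fixed-core-page} works for any number $m\geq 2D+1$ of cube copies, because $H$ has even degree $s(m-1)$ whenever $m$ is odd.

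Concretely, I would reprove the page lemma with $|I|=2^{a_i}m$ for an odd $m\geq 2D+1$. Leftover vertices (fewer than $2\cdot 2^{a_i}$ of them) would be handled in one of two ways. They can be absorbed by making one copy larger; this is the delicate point. Or, if a leftover $x$ has $N_J(x)\subseteq K_i$ with empty petal, it can simply be placed on a cube copy. The cleaner route is to keep $I_i$ of the exact size and put the remaining vertices of $X_i$ into further full cube copies, enlarging $m$ to cover all of $X_i$. This uses $|X_i|\geq 2^{a_i}(2D+1)$, and any shortfall to a multiple of $2^{a_i}$ is padded by using only a subset of a copy. A subset of a cube copy loses core secants, and I expect this to be the main obstacle. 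I would resolve it by placing the surplus vertices as additional generic homothetic copies containing full cubes, with duplicate points removed, so that every vertex of $X_i$ lies in a full cube.

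To build page $\Pi_i$, apply the (generalized) Lemma~\ref{lem:fixed-core-page} to the graph $J_i$ induced on $X_i\cup N_J(X_i)$ together with three extra vertices outside, with core map $\phi_i$. The forbidden set is
\[
F_i=g\bigl(\textstyle\bigcup_j K_j\setminus K_i\bigr)\cup\{\text{petal points used on earlier pages}\}.
\]
Its neighborhoods form a sunflower with core $K_i$. This places $X_i$ in $\Pi_i\setminus L$ and the petals $P_x$ ($x\in X_i$) at distinct points of $L$ avoiding $F_i$. The petal points are distinct from all core points and from earlier petals; this is consistent because the petals are pairwise disjoint and avoid every core.

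\textbf{Assembly.} Finally, place all remaining vertices of $V\setminus X$ at fresh points of $L$. Edges inside $V\setminus X$ lie on $L$, which carries at least three points, so they are special. An edge $xy$ with $x\in X_i$ has $y\in K_i\sqcup P_x$, and page $i$ supplies a third point on the line through $x$ and $y$. There are no edges inside $X$. Each page has a point off $L$, so the configuration spans $\RP^{p+1}$. Passing to an affine chart gives $\SGdim(J)\geq p+1$.
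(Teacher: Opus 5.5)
\textbf{There is a genuine gap, at the step you flagged: the vertices of $X_i$ beyond the $2^{a_i}(2D+1)$ that Lemma~\ref{lem:fixed-core-page} can place.}

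You try to put \emph{all} of $X_i$ off the spine. This forces you to rely on an unproved generalization of the page lemma, and the proposed repairs do not work as stated. An unoccupied position in a partially filled cube copy deprives its partners of their core secants. Generic homothetic copies are pairwise disjoint, so there are no ``duplicate points'' to remove. A leftover vertex cannot be ``placed on a cube copy'' either, since those positions are already occupied.

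Worse, putting all of $X$ off $L$ can fail outright. Your assembly step asserts that $L$ carries at least three points, but $L$ then carries only $V\setminus X$, which may have fewer than three vertices. The ``three extra vertices outside'' that you add to $J_i$ need not exist in $J$. For example, take $p=1$, $D=2$, $V=X_1\cup\{k_1,k_2\}$ with $|X_1|=23$, every $x\in X_1$ adjacent to $k_1$ and $k_2$, and $k_1k_2\in E$. All hypotheses hold. But if $X_1$ lies off $L$, the edge $k_1k_2$ spans $L$, which contains only two points, so this edge is ordinary.

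\textbf{The missing idea: the leftover vertices never need to leave the spine.} Choose $Y_i\subseteq X_i$ with $|Y_i|=2^{a_i}(2D+1)$, and place the at least three vertices of $X_i\setminus Y_i$ on $L$ as well.
\begin{itemize}
\item Their neighborhoods lie in $K_i\sqcup P_x$, which the page lemma also places on $L$. Hence all their edges lie on $L$.
\item $L$ is a special line, because each $X_i\setminus Y_i$ contributes at least three points to it.
\end{itemize}
This is exactly what the ``$+3$'' in $|X_i|\geq 2^D(2D+1)+3$ is for. It gives $|V(J_i)\setminus Y_i|\geq 3$, so Lemma~\ref{lem:fixed-core-page} applies verbatim to $J_i=J[W_i\cup K_i]$ with $I=Y_i$, and no generalization is needed.

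The rest of your plan matches the paper's proof: fixing the union of the cores on $L$ once, using $g|_{K_i}$ as $\phi_i$, forbidding previously used spine points, and assembling the pages.
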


\begin{proof}
For a graph $H$ and $A\subseteq V(H)$, we write $H[A]$ for the subgraph of $H$ induced by $A$. Put
\begin{equation}
\label{eq:iterated-book-sets}
K^\ast=\bigcup_{i=1}^pK_i,
\qquad
W_i=X_i\cup\bigcup_{x\in X_i}P_x,
\qquad
J_i=J[W_i\cup K_i]
\quad(i\in[p]).
\end{equation}
Since $X$ is independent, it is disjoint from $K^\ast$ and from every petal. By~\eqref{eq:iterated-book-sets}, the sets $W_i$ are pairwise disjoint and avoid $K^\ast$, so $V(J_i)\cap V(J_j)=K_i\cap K_j$ whenever $i\neq j$.

Put $a_i=|K_i|$. Since $a_i\leq D$, we may choose
\[
Y_i\subseteq X_i,
\qquad
|Y_i|=2^{a_i}(2D+1).
\]
Our assumption gives $|X_i\setminus Y_i|\geq 3$. For every $x\in Y_i$,
\[
N_{J_i}(x)=N_J(x)=K_i\sqcup P_x.
\]
Thus the neighborhoods of $Y_i$ form a sunflower in $J_i$, every $x\in Y_i$ has degree at most $D$, and $|V(J_i)\setminus Y_i|\geq 3$.

Take a $p$-page book with spine $L$ and pages $\Pi_1,\ldots,\Pi_p$. Place the vertices of $K^\ast$ at distinct points of $L$, and construct the pages in order. Before constructing $\Pi_i$, let $A_i\subset L$ be the points at which we placed the vertices of $K_i$, and let
\[
\phi_i\colon K_i\longrightarrow A_i
\]
be the prescribed bijection. Let $F_i$ consist of the occupied points of $L$ outside $A_i$. Apply Lemma~\ref{lem:fixed-core-page} with
\[
\begin{aligned}
J&=J_i,\quad K=K_i,\quad I=Y_i,\end{aligned}
\]
to obtain an injective map
\[
f_i\colon V(J_i)\longrightarrow\Pi_i
\]
which agrees with the prescribed positions on $K_i$, places only $Y_i$ off $L$, avoids the earlier points, and realizes $J_i$. Since $Y_i$ is nonempty and $|V(J_i)\setminus Y_i|\geq 3$, its image spans $\Pi_i$.

These maps are compatible: $J_i$ and $J_j$ meet only in $K_i\cap K_j$, where both use the prescribed placement. All other spine points are fresh, and points outside $L$ lie in distinct pages.

Place every remaining vertex of $J$ at a distinct unused point of $L$. Each set $X_i\setminus Y_i$ supplies three spine points. By~\eqref{eq:iterated-book-sets}, if an edge has an endpoint $x\in Y_i$, its other endpoint belongs to $V(J_i)$, so the realization in $\Pi_i$ makes it special. Every other edge lies on $L$ and is special there.

The configuration spans every page, hence the entire book $\RP^{p+1}$. Passing to an affine chart proves $\SGdim(J)\geq p+1$.
\end{proof}

\section{Sparse graphs}
\label{sec:sparse}

In this section we prove our two lower bounds on the Sylvester--Gallai dimension of graphs with bounded maximum degree and bounded average degree. The case of bounded maximum degree is much easier and so we prove it first.

\begin{boundedmaximumdegreerestatement}[Bounded maximum degree]
Let $G$ be a graph on $n\geq 3$ vertices with maximum degree at most $D$, where $D\geq 1$. Then
\[
\SGdim(G)=\Omega\left(\frac{n}{D^3}\right).
\]
\end{boundedmaximumdegreerestatement}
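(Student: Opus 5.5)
The plan is to reduce directly to Lemma~\ref{lem:free-core-book} with an empty core. We greedily select a set $X\subseteq V(G)$ of vertices whose closed neighborhoods $N_G[x]=\{x\}\cup N_G(x)$ are pairwise disjoint. Pick any vertex $x$, add it to $X$, and delete every vertex within distance $2$ of $x$; then repeat on what remains. A vertex has at most $D+D(D-1)=D^2$ vertices at distance $1$ or $2$, so each step removes at most $D^2+1$ vertices. The procedure therefore yields $|X|\geq n/(D^2+1)\geq n/(2D^2)$.

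Two vertices of $X$ are at distance at least $3$. Hence $X$ is independent, and the open neighborhoods $N_G(x)$, $x\in X$, are pairwise disjoint. So $(N_G(x))_{x\in X}$ is a sunflower with empty core $K=\emptyset$ and petals $P_x=N_G(x)$. Every $x\in X$ has $d_G(x)\leq D$. Since $G$ has $n\geq 3$ vertices, all hypotheses of Lemma~\ref{lem:free-core-book} hold with $J=G$. The lemma gives
\[
\SGdim(G)>\frac{|X|}{10D}\geq\frac{n}{20D^3}.
\]

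The only step needing care is the count of vertices removed per greedy step, and this is routine. Isolated vertices in $X$ are harmless, because they simply have empty petals. The hypothesis $D\geq 1$ is used only to keep the bound $D^2+1\leq 2D^2$ valid. I do not expect any real obstacle. The geometric content is entirely in Lemma~\ref{lem:free-core-book}: there the blocks of size at most $4D$ are placed on independent pages via Lemma~\ref{lem:generic-homothetic}, with the petals at private directions on the spine. The $D^3$ loss comes from two factors: $D^2$ from the disjoint-neighborhood packing, and $D$ from the block size.
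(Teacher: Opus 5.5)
Your proposal is correct and is essentially the paper's proof: the paper takes a greedy independent set in the square $G^2$ (maximum degree at most $D^2$), which is exactly your distance-$\geq 3$ packing, and then applies Lemma~\ref{lem:free-core-book} with empty core to get $\SGdim(G)>n/(10D(D^2+1))\geq n/(20D^3)$. One small point: the distance-$2$ balls you delete must be measured in $G$ itself, not in the remaining induced subgraph, so that the selected vertices really have pairwise disjoint neighborhoods.
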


\begin{proof}

Let $G^2$ denote the square of $G$: its vertex set is $V(G)$, with two vertices adjacent whenever their distance in $G$ is at most two. Then $G^2$ has maximum degree at most $D^2$ and, by greedy selection, has an independent set $X\subseteq V(G)$ with
\[
|X|\geq\frac{n}{D^2+1}.
\]
A common neighbor of distinct $x,z\in X$ would put them at distance two, so the neighborhoods $N_G(x)$, $x\in X$, are pairwise disjoint and form a sunflower with empty core. Thus, we can apply Lemma~\ref{lem:free-core-book} to obtain
\[
\SGdim(G)>\frac{|X|}{10D}
\geq\frac{n}{10D(D^2+1)}
\geq\frac{n}{20D^3}.
\]
\end{proof}

We next restate the harder bounded average-degree result, which will be proved in the rest of this section.

\begin{boundedaveragedegreerestatement}[Bounded average degree]
For every integer $D\geq 1$, every graph $G$ on $n\geq 3$ vertices with average degree at most $D$ satisfies
\[
\SGdim(G)=\Omega_D(n^{1/D}).
\]
\end{boundedaveragedegreerestatement}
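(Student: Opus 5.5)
The plan is to reduce to Lemma~\ref{lem:free-core-book}: we will find an independent set $X$ of size $\Omega_D(n^{1/D})$ whose vertices have degree at most $2D$ and whose neighborhoods form a sunflower. Lemma~\ref{lem:free-core-book} then gives $\SGdim(G)>|X|/(20D)=\Omega_D(n^{1/D})$.

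\textbf{Step 1: low-degree vertices.} Since the average degree is at most $D$, Markov's inequality shows that at least $n/2$ vertices have degree at most $2D$. Call this set $V'$. I would prefer degree at most $D$, so that the sunflower lemma exponent is $D$. Vertices of degree at most $D$ form at least a $1/(D+1)$ fraction of $V(G)$, since otherwise the average degree would exceed $D$. So we take $V'=\{v: d_G(v)\leq D\}$, with $|V'|\geq n/(D+1)$.

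\textbf{Step 2: independence and distinct neighborhoods.} The graph $G[V']$ has maximum degree at most $D$, so greedily we find an independent set $V''\subseteq V'$ with $|V''|\geq n/(D+1)^2$.

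Vertices with identical neighborhoods need separate treatment. Group $V''$ by neighborhood. If some class $C$ has size at least $n^{1/D}$, then $C$ itself is a sunflower: the core is the common neighborhood and every petal is empty. Those petals are trivially disjoint, so Lemma~\ref{lem:free-core-book} applies directly. Indeed, vertices of degree $0$ form such a class with empty core, which is also fine.

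Otherwise there are at least $|V''|/n^{1/D}$ distinct neighborhoods. Pick one representative from each class and let the resulting set family be $\mathcal S$. Each member of $\mathcal S$ has size at most $D$.

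\textbf{Step 3: sunflower lemma.} By the Erd\H{o}s--Rado sunflower lemma, a family of more than $D!\,(k-1)^D$ distinct sets of size at most $D$ contains a sunflower with $k$ petals. (Sets of different sizes can be handled by pigeonholing on size, at the cost of a factor $D$.) With $|\mathcal S|\gtrsim n^{1-1/D}/(D+1)^2$ we get $k=\Omega_D(n^{(1-1/D)/D})$. This is weaker than $n^{1/D}$, so the balancing needs care.

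The fix is to include all duplicates in the sunflower. If a sunflower among the distinct neighborhoods has core $K$ and petals $P_1,\dots,P_k$, then every vertex in the classes of those sets can be used, except that two vertices sharing a neighborhood give identical nondisjoint petals. Those must be excluded unless the petal is empty. The better approach is to run the two cases with threshold $t$: either some class has size at least $t$, or there are at least $n/((D+1)^2t)$ distinct sets, yielding $k\gtrsim_D (n/t)^{1/D}$. Choosing $t$ to balance gives $t=n^{1/(D+1)}$, so the bound is $\Omega_D(n^{1/(D+1)})$.

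To get exactly $n^{1/D}$, I would refine Step 1. Use vertices of degree at most $D-1$ when they are plentiful; otherwise many vertices have degree exactly around $D$. Alternatively, note that vertices of degree $0$ or duplicates cost nothing, and apply the sunflower bound only to sets of size at most $D$ with exponent $D$ after removing the duplicate factor. The one-sided case with a large class already gives a linear-size sunflower.

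\textbf{Main obstacle.} I expect the main obstacle to be exactly this exponent bookkeeping, namely getting $n^{1/D}$ rather than $n^{1/(D+1)}$. Handling identical neighborhoods without losing a power of $n$ is the delicate part. The first thing I would try is the sharper degree threshold argument: choose the degree cutoff $D'\le D$ so that the number of vertices of degree at most $D'$ is $\Omega_D(n)$, and apply the sunflower lemma with exponent $D'$.
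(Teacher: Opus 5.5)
\textbf{There is a genuine gap: your argument does not reach the stated bound.} As you note yourself, balancing the two cases with threshold $t$ only gives $\SGdim(G)=\Omega_D(n^{1/(D+1)})$, and the proposed repairs are not carried out.

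The bottleneck is vertices of $V''$ lying in repeated-neighborhood classes of intermediate size. With Lemma~\ref{lem:free-core-book} alone you have two options. You can use a single class, which is a sunflower with empty petals and is worth about $t/D$. Or you can use one representative per class, since duplicates with nonempty petals violate disjointness. Nothing in your case analysis excludes a profile of roughly $n^{D/(D+1)}$ classes of size roughly $n^{1/(D+1)}$, and for that profile both options give only $n^{1/(D+1)}$.

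Lowering the degree cutoff to some $D'\le D$ does not fix this. The loss comes from the duplicate classes, not from the sunflower exponent. The refinement is also vacuous whenever essentially all low-degree vertices have degree exactly $D$ (e.g.\ $G$ is $D$-regular), since then you must take $D'=D$. Two smaller points: a single class of size $\geq t$ gives a sunflower of size $t$, not a linear-size one. And Lemma~\ref{lem:sunflower} already applies to sets of size \emph{at most} $D$, so no pigeonholing on size is needed.

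\textbf{The missing idea is to use many repeated classes at once, each on its own page with its own core.} The paper sets the class-size threshold to the constant $R=2^D(2D+1)+3$.
\begin{enumerate}
\item Suppose at least half of the low-degree independent set lies in classes of size $\geq R$. Cut these classes into $p=\Omega(n/((D+1)^2R))$ blocks of exactly $R$ vertices with identical neighborhoods. Each block is a sunflower with empty petals. Different blocks, however, have different and possibly overlapping cores, so the cores cannot be placed at freely chosen polygon directions as in Lemma~\ref{lem:free-core-book}. Instead, all core vertices are fixed on the spine in advance. The projected-hypercube gadget of Claim~\ref{claim:fixed-core-cube} gives every point secants in $a$ arbitrary prescribed directions using $2^a$ points. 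Using it page by page is exactly Lemma~\ref{lem:iterated-fixed-core-book}, which yields $\SGdim(G)\geq p+1=\Omega_D(n)$.
\item Otherwise, more than half of the set lies in classes of size $<R$, so there are $\Omega_D(n)$ distinct neighborhood types. The Erd\H{o}s--Rado lemma gives $\Omega_D(n^{1/D})$ of them forming a sunflower, and Lemma~\ref{lem:free-core-book} finishes.
\end{enumerate}
Because the threshold is a constant rather than a power of $n$, no power of $n$ is lost in the case split.
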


The next simple lemma supplies the initial bounded-degree independent set in the proof of Theorem~\ref{thm:main-sparse}.

\begin{lem}[Low-degree independent set]\label{lem:low-degree-independent}
Let $D\geq 1$ be an integer. If $G$ has $n$ vertices and average degree at most $D$, then $G$ contains an independent set $X$ such that
\[
d_G(x)\leq D\quad(x\in X),
\qquad |X|\geq\frac{n}{(D+1)^2}.
\]
\end{lem}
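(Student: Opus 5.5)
The plan is a Markov-type count to find many low-degree vertices, followed by a greedy independent-set selection inside the induced subgraph on them.

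\textbf{Step 1 (many low-degree vertices).} Let $S=\{v\in V(G): d_G(v)\leq D\}$. Since the average degree is at most $D$, the degree sum is at most $Dn$. Each vertex outside $S$ has degree at least $D+1$, so
\[
(D+1)(n-|S|)\leq Dn,
\]
which gives $|S|\geq n/(D+1)$.

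\textbf{Step 2 (independent set inside $S$).} Consider the induced subgraph $G[S]$. Every vertex of $S$ has degree at most $D$ in $G$, hence at most $D$ in $G[S]$. Run the greedy algorithm on $G[S]$: repeatedly pick a remaining vertex, add it to $X$, and delete it together with its neighbors. Each step removes at most $D+1$ vertices, so the resulting independent set satisfies
\[
|X|\geq\frac{|S|}{D+1}\geq\frac{n}{(D+1)^2}.
\]
Every $x\in X$ lies in $S$, so $d_G(x)\leq D$, as required.

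There is no real obstacle here. The only point needing care is Step 1, where we must use the strict threshold $D+1$ for the high-degree vertices so that the Markov bound yields exactly the factor $1/(D+1)$.
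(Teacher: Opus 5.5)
Your proof is correct and is essentially the paper's argument: the same degree-sum count shows that at least $n/(D+1)$ vertices have degree at most $D$. Greedy selection in the subgraph they induce then removes at most $D+1$ vertices per step, giving $|X|\geq n/(D+1)^2$.
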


\begin{proof}
Let $H$ be the set of vertices of degree at least $D+1$. The degree sum gives
\[
(D+1)|H|
\leq\sum_{v\in V(G)}d_G(v)
\leq Dn.
\]
Thus $U=V(G)\setminus H$ has at least $n/(D+1)$ vertices and $G[U]$ has maximum degree at most $D$. Greedy selection in $G[U]$ removes at most $D+1$ vertices at each step, and hence produces an independent set of size at least $n/(D+1)^2$.
\end{proof}

We will need the classical sunflower lemma to handle distinct neighborhood types in Theorem~\ref{thm:main-sparse}. One could use the more recent improvements of ~\cite{ALWZ,BellChueluechaWarnke}, but the gain would only affect the implicit constant in the final bound.
\begin{lem}[Sunflower lemma~\cite{ErdosRado}]\label{lem:sunflower}
Let $D\geq 1$ and $k\geq 2$. If $\mathcal A$ is a family of more than $D!(k-1)^D$ distinct sets, each of size at most $D$, then $\mathcal A$ contains a sunflower with $k$ petals.
\end{lem}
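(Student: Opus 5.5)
The plan is the classical Erdős–Rado argument: induct on $D$, and in each step either find $k$ pairwise disjoint sets or find one element lying in many sets and recurse. Throughout, $\mathcal A$ is a family of distinct sets of size at most $D$ with $|\mathcal A|>D!(k-1)^D$.

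\emph{Base case $D=1$.} Every member of $\mathcal A$ is either $\emptyset$ or a singleton. Since the sets are distinct, they are pairwise disjoint. There are more than $k-1$ of them, so any $k$ of them form a sunflower with empty core.

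\emph{Inductive step $D\geq 2$.}
\begin{enumerate}
\item Fix a maximal subfamily $\mathcal M\subseteq\mathcal A$ of pairwise disjoint sets. If $|\mathcal M|\geq k$, then any $k$ members of $\mathcal M$ form a sunflower with empty core, and we are done. So assume $|\mathcal M|\leq k-1$.
\item Let $U=\bigcup\mathcal M$. By maximality, every nonempty set of $\mathcal A$ meets $U$; a member of $\mathcal M$ meets $U$ trivially, and any other nonempty set could otherwise be added to $\mathcal M$.
\item Pigeonhole over the elements of $U$ gives an element $u\in U$ lying in more than $(D-1)!(k-1)^{D-1}$ sets of $\mathcal A$.
\item Form $\mathcal A_u=\{A\setminus\{u\}: u\in A\in\mathcal A\}$. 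These are distinct sets of size at most $D-1$, and there are more than $(D-1)!(k-1)^{D-1}$ of them.
\item The induction hypothesis gives a sunflower $A_1\setminus\{u\},\ldots,A_k\setminus\{u\}$ with core $C$. Then $A_1,\ldots,A_k$ form a sunflower with core $C\cup\{u\}$, because the petals are unchanged and $u$ lies in no petal.
\end{enumerate}

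\emph{The main obstacle: the pigeonhole count in step 3 when $\emptyset\in\mathcal A$.} The empty set does not meet $U$, so it must be treated separately.

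If $\emptyset\notin\mathcal A$, then every set of $\mathcal A$ meets $U$, and $|U|\leq(k-1)D$. Hence some $u$ lies in at least
\[
\frac{|\mathcal A|}{(k-1)D}>\frac{D!(k-1)^D}{(k-1)D}=(D-1)!(k-1)^{D-1}
\]
sets, which is the strict inequality step 3 needs.

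If $\emptyset\in\mathcal A$, we may assume $\emptyset\in\mathcal M$, since it is disjoint from everything. Then the nonempty members of $\mathcal M$ number at most $k-2$, so $|U|\leq(k-2)D$.
\begin{itemize}
\item If $k=2$, then $\mathcal M=\{\emptyset\}$ is maximal, which forces $\mathcal A=\{\emptyset\}$. This contradicts $|\mathcal A|>D!\geq1$.
\item If $k\geq 3$, the at least $|\mathcal A|-1\geq D!(k-1)^D$ nonempty sets all meet $U$. So some $u$ lies in at least
\[
\frac{|\mathcal A|-1}{(k-2)D}\geq\frac{D!(k-1)^D}{(k-2)D}>(D-1)!(k-1)^{D-1}
\]
sets, using $k-2<k-1$.
\end{itemize}
In both cases step 3 holds, and the induction closes.
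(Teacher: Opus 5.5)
Your proof is correct. The paper gives no proof of its own and only cites Erd\H{o}s--Rado; your argument is the classical Erd\H{o}s--Rado induction (a maximal pairwise disjoint subfamily, pigeonhole over its union, then recursion on the link of a popular element), and your separate treatment of $\emptyset\in\mathcal A$ correctly preserves the strict inequality that the induction needs.
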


We will also need to apply the sunflower lemma for set systems in which some sets might repeat.

\begin{lem}[Sunflower lemma with repetitions]\label{lem:indexed-sunflower}
Let $D\geq 1$ and $k\geq 2$, and let $(F_i)_{i=1}^M$ be an indexed family of sets with $|F_i|\leq D$. If
\[
M>D!(k-1)^{D+1},
\]
then some $k$ indexed members form a sunflower.
\end{lem}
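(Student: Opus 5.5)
We reduce Lemma~\ref{lem:indexed-sunflower} to the classical sunflower lemma (Lemma~\ref{lem:sunflower}) by splitting according to how often the sets repeat. Group the indices $i\in[M]$ by the value of the set $F_i$. Let $\mathcal A=\{F_i : i\in[M]\}$ be the family of \emph{distinct} sets that occur, and for $F\in\mathcal A$ let $\mu(F)=|\{i: F_i=F\}|$ be its multiplicity. There are two cases.

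\emph{Case 1: some set occurs at least $k$ times.} Suppose $\mu(F)\geq k$ for some $F$. Choose $k$ indices $i_1,\ldots,i_k$ with $F_{i_j}=F$. These $k$ indexed members form a sunflower with core $K=F$ and empty petals $P_{i_j}=\emptyset$. The empty petals are trivially pairwise disjoint, and each $F_{i_j}=K\sqcup\emptyset$, so this matches the sunflower definition used in the paper.

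\emph{Case 2: every set occurs at most $k-1$ times.} Then
\[
|\mathcal A|\geq \frac{M}{k-1}>D!(k-1)^D .
\]
Since every member of $\mathcal A$ has size at most $D$, Lemma~\ref{lem:sunflower} gives $k$ distinct sets in $\mathcal A$ forming a sunflower. Picking one index for each of these sets yields $k$ indexed members forming a sunflower, with distinct indices because the sets are distinct.

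There is no real obstacle. The only point needing care is that in Case 1 the degenerate sunflower, with repeated sets and empty petals, counts as a sunflower under the definition in the paper. The later application to neighborhoods relies on exactly this degenerate case, which is why the lemma is stated for indexed families.
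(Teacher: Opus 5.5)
Your proposal is correct and is essentially the paper's proof. Like you, the paper takes the repeated sets as a sunflower with empty petals when some set occurs at least $k$ times, and otherwise notes there are more than $D!(k-1)^D$ distinct sets and applies Lemma~\ref{lem:sunflower}.
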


\begin{proof}
If some set occurs at least $k$ times, these occurrences form a sunflower. Otherwise every set occurs at most $k-1$ times, so the family contains more than $D!(k-1)^D$ distinct sets. Apply Lemma~\ref{lem:sunflower} with rank parameter $D$ and sunflower size $k$.
\end{proof}

\begin{proof}[{\bf Proof of Theorem~\ref{thm:main-sparse}}:]

Apply Lemma~\ref{lem:low-degree-independent} to $G$ to get an independent set $X_0$ whose vertices have degree at most $D$. Put
\begin{equation}
\label{eq:sparse-parameters}
M=|X_0|\geq\frac{n}{(D+1)^2},
\qquad
R=2^D(2D+1)+3
\end{equation}
and partition $X_0$ into classes of vertices so that $x$ and $y$ are in the same class if and only if $N_G(x)=N_G(y)$.

\emph{Case 1: Repeated neighborhood types.} Suppose that at least $M/2$ vertices lie in classes of size at least $R$. Split each such class into blocks of exactly $R$ vertices, discarding the remainders. If a class has size $z\geq R$, it contributes at least $z/(2R)$ blocks, so the total number $p$ of blocks satisfies
\[
p\geq\frac{M}{4R}.
\]
Within each block all neighborhoods are equal, and hence form a sunflower with empty petals. Lemma~\ref{lem:iterated-fixed-core-book} then gives
\[
\SGdim(G)\geq p+1\geq\frac{M}{4R}\geq\frac{n}{4(D+1)^2R} \geq \Omega_D(n^{1/D}).
\]
Here the last inequality uses~\eqref{eq:sparse-parameters}.

\emph{Case 2: Distinct neighborhood types.} Otherwise, more than $M/2$ vertices lie in classes of size less than $R$. The number $s$ of distinct neighborhood types among these vertices therefore satisfies
\begin{equation}
\label{eq:sparse-distinct-types}
s>\frac{M}{2R}.
\end{equation}
Put $k=\lceil(s/D!)^{1/D}\rceil$. If $k=1$, choose one neighborhood type. If $k\geq 2$, then $D!(k-1)^D<s$, so Lemma~\ref{lem:sunflower} gives $k$ distinct neighborhood types that form a sunflower. In either case, choose one representative of each selected type to obtain a set $X$ of size $k$ whose neighborhoods form a sunflower. Together with~\eqref{eq:sparse-parameters} and~\eqref{eq:sparse-distinct-types}, this gives
\[
k\geq\left(\frac{s}{D!}\right)^{1/D}=\Omega_D(n^{1/D}).
\]
Lemma~\ref{lem:free-core-book} now gives
\[
\SGdim(G)>\frac{k}{10D}=\Omega_D(n^{1/D}).
\]
This completes the proof of Theorem~\ref{thm:main-sparse}. 
\end{proof}

\section{Graphs with an  excluded minor}
\label{sec:minor-free}

In this section we restate and prove the linear lower bound on the Sylvester--Gallai dimension for graphs excluding some fixed minor.

\begin{fixedexcludedminorrestatement}[Fixed excluded minor]
For every finite graph $H$, there is a constant $c_H>0$ such that every $H$-minor-free graph $G$ on $n\geq 3$ vertices satisfies
\[
\SGdim(G)\geq c_H n.
\]
\end{fixedexcludedminorrestatement}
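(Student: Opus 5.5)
We reduce to Lemma~\ref{lem:iterated-fixed-core-book}. The goal is to find, inside $G$, an independent set $X$ of linear size $\Omega_H(n)$, split into $p=\Omega_H(n)$ blocks $X_1,\ldots,X_p$. Within each block the neighborhoods should form a sunflower with core $K_i$. All degrees in $X$ should be at most a constant $D=D_H$. Finally, all petals, across all blocks, should be pairwise disjoint and avoid every core. The degree bound comes from sparsity: $H$-minor-free graphs have at most $c'_H n$ edges (Mader/Kostochka--Thomason). As in Lemma~\ref{lem:low-degree-independent}, this gives $\Omega_H(n)$ vertices of degree at most $D=2c'_H$.

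\textbf{Step 1: an $r$-division.} Use the $r$-division theorem for minor-free graphs (Frederickson, and Alon--Seymour--Thomas separators in the minor-free setting). For a constant $r=r_H$, the vertex set splits into regions $R_1,\ldots,R_t$, each of size at most $r$, whose boundary vertices number at most $O_H(n/\sqrt r)$ in total. Call a vertex \emph{interior} if it lies in a single region and all its neighbors lie in that region. Choosing $r$ large, at most $n/(4(D+1))$ or so low-degree vertices fail to be interior. So we retain $\Omega_H(n)$ low-degree interior vertices. Two interior vertices from different regions then have neighborhoods that are disjoint except possibly through boundary vertices.

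\textbf{Step 2: neighborhood types.} For each such interior vertex $x$, split $N_G(x)$ into its boundary part $B_x$ and its strictly interior part. Vertices of different regions share neighbors only in boundary vertices. So we want the cores to absorb all boundary neighbors, and the petals to consist of non-boundary vertices, which is automatic across regions.

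We group vertices by the pair (boundary-neighborhood $B_x$, and the region). Within a region there are at most $r$ vertices. A greedy step inside each region gives a subset whose interior neighborhoods are pairwise disjoint: take an independent set in the square graph restricted to interior parts, losing a factor $D^2+1$. Then bundle vertices across regions that share the same $B_x$. Each $B_x$ is a subset of size at most $D$ of the boundary set $\partial$.

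Here is the \textbf{main obstacle}: the blocks must have size $R=2^D(2D+1)+3$, and all members must have identical boundary neighbourhood $B_x$ to make $B_x$ a core. We then apply a counting/sunflower argument on the boundary graph. Consider the bipartite incidence between the chosen vertices and the at most $O_H(n/\sqrt r)$ boundary vertices. Contracting each region to a vertex gives a minor of $G$, and a minor-free bipartite graph has linear edges and, crucially, only $O_H(N)$ distinct neighborhoods of bounded size. This is the standard bound that minor-free graphs have linearly many cliques, or more precisely that the number of distinct neighbourhood sets into a set of size $N$ is $O_H(N)$. Taking $r$ large, the number of distinct types $B_x$ is at most $\epsilon n$. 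So most vertices lie in types of multiplicity at least $R$. Splitting each type class into blocks of size $R$ then yields $p=\Omega_H(n)$ blocks. Each block has common core $K_i=B_x$, and its petals are pairwise disjoint interior sets, disjoint from all boundary vertices and hence from every core.

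\textbf{Step 3.} Apply Lemma~\ref{lem:iterated-fixed-core-book} to get $\SGdim(G)\geq p+1\geq c_H n$. For small $n$ the bound is trivial from the collinear realization. The delicate points are making sure the type-counting bound (distinct neighborhoods of bounded size in a minor-free bipartite contraction) is invoked correctly, and arranging that the earlier greedy pruning does not destroy the multiplicities.
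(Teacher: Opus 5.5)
\textbf{There is a genuine gap, in the Step 2 pruning.} You claim that passing to an independent set of the square graph, so that the interior parts of the neighborhoods become pairwise disjoint, costs only a factor $D^2+1$. That bound holds when \emph{every} vertex has degree at most $D$, as in Theorem~\ref{thm:main-bounded-degree}. Here only the selected vertices $x$ have degree at most $D$.

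An interior neighbor $w$ of $x$ can have degree of order $r$, and then all low-degree neighbors of $w$ conflict pairwise. So the pruning can lose a factor of order $Dr$ and leave only $\Theta(n/r)$ survivors. Your type count, via Lemma~\ref{lem:minor-free-neighborhood-complexity} on the total boundary, only bounds the number of distinct boundary types by $O(n/\sqrt r)$. That is far more than the number of survivors, so the multiplicity argument gives nothing.

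This is a real failure, not a loose estimate. Take a chain of copies of $K_{2,s}$, with $s$ of order $r$. The $i$-th copy has sides $\{w_i,b_i\}$ and $\{x_{i,1},\ldots,x_{i,s}\}$, and consecutive copies are linked by edges $b_{i-1}w_i$; the graph is planar. Let region $R_i$ be the $i$-th copy plus the edge $b_{i-1}w_i$; for a suitable $s$ of order $r$ this is compatible with Lemma~\ref{lem:minor-free-r-division}. Then:
\begin{itemize}
\item $w_i$ is interior, $b_{i-1}$ and $b_i$ are boundary vertices, and $N_G(x_{i,j})=\{w_i,b_i\}$;
\item all $x_{i,j}$ with the same $i$ share the interior vertex $w_i$, so your pruning keeps one vertex per region;
\item the boundary type $\{b_i\}$ of that vertex differs from region to region, so every type class has size $1<R$ and you get no blocks at all.
\end{itemize}

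\textbf{The missing idea.} Cores must be allowed to contain \emph{interior} vertices, not only boundary ones. In the example the $x_{i,j}$ have identical neighborhoods and form a perfect block with core $\{w_i,b_i\}$ and empty petals.

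The paper therefore builds blocks inside each region rather than bundling across regions. Take a region whose interior $U_i$ has at least $\eta r$ vertices and whose boundary $B_i$ has size $O(\sqrt r)$. The paper then proceeds as follows (Lemma~\ref{lem:minor-free-region-sunflower}):
\begin{itemize}
\item it takes $\Omega(|U_i|)$ independent low-degree vertices of $U_i$;
\item it applies Lemma~\ref{lem:minor-free-neighborhood-complexity} with respect to $B_i$ alone, to find $\Omega(\sqrt{|U_i|})$ of them with a common boundary trace $K$;
\item it applies the sunflower lemma \emph{with repetitions} (Lemma~\ref{lem:indexed-sunflower}) to their interior parts $F_x$. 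This gives a block of size $2^{\Delta_t}(2\Delta_t+1)+3$ with core $K\sqcup T$, where $T\subseteq U_i$, and petals inside $U_i$.
\end{itemize}
One block per heavy region gives at least $n/(2r)$ blocks, which is linear because $r$ depends only on $H$. Petals from different regions lie in disjoint interiors and avoid every core, so Lemma~\ref{lem:iterated-fixed-core-book} finishes the proof.

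Separately, you also need the reduction to connected $G$, since the $r$-division lemma is stated for connected graphs.
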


We begin with a subsection containing the preliminaries we will need from the rich literature on graph minors. These include a linear edge count upper bound, a linear bound on the neighborhood complexity, and a partitioning lemma. We then prove the main technical lemma and the theorem in the following two subsections.

\subsection{Preliminaries}
\label{subsec:minor-free-preliminaries}

\begin{define}[Minors and minor models]
\label{def:graph-minor}
A graph $H$ is a \emph{minor} of a graph $G$, written $H\preccurlyeq G$, if $H$ can be obtained from a subgraph of $G$ by contracting edges. Equivalently, there are pairwise disjoint nonempty connected vertex sets $   (X_h)_{h\in V(H)}$
in $G$, called \emph{branch sets}, such that whenever $hh'\in E(H)$, some edge of $G$ joins $X_h$ to $X_{h'}$. A graph is $H$-minor-free if $H\not\preccurlyeq G$.
\end{define}

Notice that every subgraph of $G$ is a minor of $G$, and the minor relation is transitive. Thus, if $H\preccurlyeq K_t$, then every $H$-minor-free graph is $K_t$-minor-free. The first result we need is an upper bound on the number of edges in a graph not containing a $K_t$ minor.

\begin{thm}[Complete-minor density bound~\cite{Mader}]
\label{thm:complete-minor-density}
For every integer $t\geq 3$ there is a constant $\mu_t > 0$ such that every nonempty finite simple graph $G$ with $K_t\not\preccurlyeq G$ satisfies
\[
|E(G)|<\mu_t|V(G)|.
\]
\end{thm}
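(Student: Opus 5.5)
We prove the contrapositive with the explicit constant $\mu_t=2^{t-3}$: every finite graph $G$ with $|E(G)|\geq 2^{t-3}|V(G)|$ (and $|V(G)|\geq 1$) has $K_t\preccurlyeq G$. The argument is Mader's classical induction on $t$, using a minor-minimal counterexample-free reduction. For the base case $t=3$, a graph with $|E|\geq|V|$ is not a forest, so it contains a cycle, and contracting the cycle gives $K_3$.

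For the inductive step, fix $t\geq 4$ and put $c=2^{t-3}$. Among all minors $G'$ of $G$ with $|E(G')|\geq c|V(G')|$ and $|V(G')|\geq 1$, choose one that is minimal in the minor order; $G$ itself qualifies, so $G'$ exists. Since $G'\preccurlyeq G$ and the minor relation is transitive, it suffices to find a $K_t$ minor in $G'$. We then read off three local properties from minimality.
\begin{enumerate}
\item $G'$ has no isolated vertex, since deleting one would preserve the density condition (note $|E(G')|\geq c>0$, so $G'$ has more than one vertex).
\item Deleting any edge violates the condition, so $|E(G')|-1<c|V(G')|$, i.e.\ $c|V(G')|\leq|E(G')|<c|V(G')|+1$.
\item Contracting an edge $uv$ removes one vertex and exactly $1+|N(u)\cap N(v)|$ edges. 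Minimality gives $|E(G')|-1-|N(u)\cap N(v)|<c(|V(G')|-1)$, and combined with $|E(G')|\geq c|V(G')|$ this forces $|N(u)\cap N(v)|> c-1$, hence $|N(u)\cap N(v)|\geq c$ for every edge $uv$.
\end{enumerate}

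By the second property, the average degree of $G'$ is less than $2c+2/|V(G')|$, so some vertex $v$ has $d(v)\leq 2c$, and $d(v)\geq 1$ by the first property. Let $N=G'[N(v)]$, which is nonempty. By the third property, every $u\in N(v)$ has at least $c$ neighbors inside $N(v)$, so
\[
|E(N)|\geq \tfrac{c}{2}|N(v)|=2^{t-4}|V(N)|.
\]
By the induction hypothesis for $t-1$, we get $K_{t-1}\preccurlyeq N$. Since $v$ is adjacent to every vertex of $N$, adding $\{v\}$ as an extra branch set yields $K_t\preccurlyeq G'\preccurlyeq G$.

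The only delicate step is the edge-contraction count in the third property. One must check that contraction in a simple graph loses precisely $1+|N(u)\cap N(v)|$ edges, and handle the integrality of the degree bound so that a vertex of degree at most $2c$ really exists. Everything else is routine bookkeeping.
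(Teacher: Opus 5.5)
Your proof is correct. The paper gives no proof of this statement and only cites Mader; your argument is the classical minimal-minor induction behind that result (as presented, e.g., in Diestel's textbook), and it yields the explicit value $\mu_t=2^{t-3}$. The choice of a vertex of degree at most $2c$, and with it your second property and the averaging step, is superfluous: your third property already gives $G'[N(v)]$ minimum degree at least $c$ for every non-isolated $v$.
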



The second preliminary lemma controls how many different neighborhoods vertices may have inside a prescribed set. It can be derived from general results on bounded expansion: every proper minor-closed class has bounded expansion~\cite{NesetrilOssonaExpansion}, and bounded-expansion classes have linear neighborhood complexity~\cite{ReidlSanchezStavropoulos}. We include a self-contained proof following the argument of ~\cite[Section~3]{ReedWood}.

\begin{lem}[Linear neighborhood complexity]
\label{lem:minor-free-neighborhood-complexity}
For every integer $t\geq 3$, there is a constant $\nu_t\geq 1$ such that, for every $K_t$-minor-free graph $G$ and every $B\subseteq V(G)$,
\[
   \left|\{N_G(v)\cap B:v\in V(G)\setminus B\}\right|
   \leq\nu_t|B|+1.
\]
\end{lem}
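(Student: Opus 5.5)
We plan to prove Lemma~\ref{lem:minor-free-neighborhood-complexity} with a greedy contraction argument that turns traces into cliques of a $K_t$-minor-free graph on $B$. Cliques can then be counted using degeneracy and Theorem~\ref{thm:complete-minor-density}.

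\emph{Setup.} Fix $t\geq 3$, $G$, and $B$. For every distinct nonempty trace $T=N_G(v)\cap B$ with $v\notin B$, choose one representative $v_T\in V(G)\setminus B$, and let $R$ be the set of representatives. The empty trace contributes at most $1$ to the count, so it suffices to show $|R|\leq\nu_t|B|$.

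\emph{Greedy phase.} We build a graph $M$ on vertex set $B$ that is a minor of $G$, together with branch sets $(S_b)_{b\in B}$. Initially $S_b=\{b\}$, and $M$ is $G[B]$. We process the representatives $r\in R$ one by one.
\begin{itemize}
\item If the trace $T_r$ contains two vertices $b,b'$ that are nonadjacent in the current $M$, we add $r$ to the branch set $S_b$. This is allowed because $r$ is adjacent to $b$, so $S_b\cup\{r\}$ is still connected. Since $r$ is also adjacent to $b'$, the edge $bb'$ is now present in $M$. We call such an $r$ \emph{used}.
\item Otherwise $T_r$ is already a clique in $M$, and we leave $r$ alone.
\end{itemize}
Each used representative lies outside $B$ and is added to only one branch set, so the branch sets stay pairwise disjoint and connected. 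Hence $M$ is a minor of $G$ and is $K_t$-minor-free. Adding edges only ever makes a set more clique-like, so every unused trace is still a clique in the final $M$. Each used $r$ creates a new edge of $M$. By Theorem~\ref{thm:complete-minor-density}, the number of used representatives is therefore less than $\mu_t|B|$.

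\emph{Counting cliques.} The unused representatives have distinct traces, and each such trace is a nonempty clique of the final $M$. So it remains to show that a $K_t$-minor-free graph on $N$ vertices has at most $c_tN$ nonempty cliques.
\begin{itemize}
\item Every subgraph of $M$ is also $K_t$-minor-free, so by Theorem~\ref{thm:complete-minor-density} every subgraph has a vertex of degree less than $2\mu_t$. Thus $M$ is $\lfloor 2\mu_t\rfloor$-degenerate.
\item Fix a degeneracy ordering, in which each vertex has fewer than $2\mu_t$ neighbors later in the order.
\item Every nonempty clique is determined by its earliest vertex together with a subset of that vertex's later neighbors.
\end{itemize}
This gives at most $2^{2\mu_t}N$ cliques. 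Altogether,
\[
|\{N_G(v)\cap B: v\in V(G)\setminus B\}|\leq 1+\mu_t|B|+2^{2\mu_t}|B|,
\]
so we may take $\nu_t=\lceil\mu_t+2^{2\mu_t}\rceil$.

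\emph{Main obstacle.} The step needing the most care is checking that the greedy phase really produces a minor. The points to verify are that each representative is absorbed into at most one branch set and that the absorbed vertex is adjacent to the designated $b$. The other point to check is that the final $M$, not an intermediate one, is the graph in which unused traces are cliques; this holds because edges are only ever added.
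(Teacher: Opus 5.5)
Your proof is correct and is essentially the paper's argument, which itself follows Reed--Wood. Your greedy absorption of a representative $r$ into $S_b$ plays the role of the paper's maximal matching of traces to pairs in $\binom B2$: the path $b\,r\,b'$ realizes the new edge, unused traces are cliques of the resulting $K_t$-minor-free graph on $B$, and cliques are counted via the degeneracy ordering from Theorem~\ref{thm:complete-minor-density}. The one difference is bookkeeping: you bound the used traces separately by $|E(M)|<\mu_t|B|$, whereas the paper injects matched traces into the $2$-cliques given by their matched pairs (after pre-matching two-element traces so that images of different sizes cannot collide), which saves your additive $\mu_t|B|$ term but is otherwise equivalent.
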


\begin{proof}
Let
\[
\mathcal T=\{N_G(v)\cap B:v\in V(G)\setminus B\}.
\]
We refer to the elements of $\cT$ as `traces'. For each trace $S\in\mathcal T$, choose a representative $v_S\in V(G)\setminus B$ so that
\begin{equation}
\label{eq:trace-representative}
N_G(v_S)\cap B=S.
\end{equation}

If $B=\emptyset$, the result is immediate, so assume that $B\neq\emptyset$. Consider the bipartite graph with sides
\[
\mathcal T_{\geq 2}=\{S\in\mathcal T:|S|\geq 2\},\qquad \binom B2
\]
joining $S$ to each pair contained in $S$. We now describe a matching $M$ in this graph. Start by matching every two-element $S\in\cT$ to itself, and extend this to a maximal matching $M$. Let $Q$ be the graph on $B$ whose edges are the pairs in $\binom B2$ that are incident with an edge of $M$. Notice that maximality of $M$ implies that every unmatched $S$ is a clique of $Q$.

\begin{claim}\label{claim:neighborhood-cliques}
The graph $Q$ is a minor of $G$, and $\mathcal T$ injects into the cliques of $Q$, with the empty set counted as a clique.
\end{claim}

\begin{proof}
By~\eqref{eq:trace-representative}, if $S$ is matched to $\{x,y\}$, then $xv_S,v_Sy\in E(G)$. These paths have distinct internal vertices, and together with the isolated vertices of $Q$ they form a subdivision of $Q$. Hence $Q\preccurlyeq G$.

Map the empty and singleton traces to themselves, each trace matched by $M$ to its matched pair, and each unmatched trace $S$ to $S$ (which must be a clique by the preceding comment). Every two-element trace was matched at the outset; hence the four types of images have sizes zero, one, two, and at least three. Within each type the map is injective.
\end{proof}

It remains to count the cliques of $Q$. Let $\mu_t$ be as in Theorem~\ref{thm:complete-minor-density} and set $d_t=\lceil 2\mu_t\rceil-1$. By the above claim, every nonempty subgraph $Q'\subseteq Q$ is $K_t$-minor-free and so, by Theorem~\ref{thm:complete-minor-density}, we have
\[
|E(Q')|<\mu_t|V(Q')|.
\]
Thus every nonempty subgraph of $Q$ has a vertex of degree at most $d_t$. Repeatedly deleting such a vertex gives an ordering of $B$ in which each vertex has at most $d_t$ later neighbors. Every nonempty clique is determined by its first vertex in this ordering and a subset of its later neighbors. Therefore $Q$ has at most $2^{d_t}|B|$ nonempty cliques. Claim~\ref{claim:neighborhood-cliques} now gives $
|\mathcal T|\leq 2^{d_t}|B|+1,$ and 
so we may take $
\nu_t=2^{d_t}.$
This completes the proof of Lemma~\ref{lem:minor-free-neighborhood-complexity}.
\end{proof}

The last preliminary result we will need for graphs with an excluded minor will allow us to find a good division of the vertex set into not-too-overlapping parts. We first define the notion of graph division.

\begin{define}[Divisions and boundaries]
\label{def:graph-division}
A \emph{division} of a graph $G$ is a partition
\[
   E(G)=E_1\sqcup\cdots\sqcup E_p.
\]
The corresponding \emph{regions} $R_i$ are the edge-induced subgraphs with edge sets $E_i$; thus $V(R_i)$ consists of the endpoints of the edges in $E_i$. A vertex is a \emph{boundary vertex} if it belongs to more than one region, and the boundary of $R_i$ is
\[
   \partial R_i
   =
   \{v\in V(R_i):v\in V(R_j)\text{ for some }j\neq i\}.
\]
\end{define}

The next lemma is the special case of Yuster~\cite[Lemma~2.3]{Yuster} and follows from an iterative application of the weighted separator theorem of Alon, Seymour, and Thomas~\cite[Theorem~1.2]{AlonSeymourThomas} (see also Frederickson~\cite[Section~2]{Frederickson}).

\begin{lem}[Minor-free $r$-divisions]
\label{lem:minor-free-r-division}
For every integer $t\geq 3$, there are constants $\alpha_t,\beta_t\geq 1$
and an integer $r_t\geq 2$ such that the following holds. Let $r_t\leq r\leq n$ be integers. Every connected $n$-vertex $K_t$-minor-free graph $G$ has a division into regions $R_1,\ldots,R_p$ satisfying
\[
   p\leq\alpha_t\frac nr,\qquad
   |V(R_i)|\leq r,\qquad
   |\partial R_i|\leq\beta_t\sqrt r.
\]
\end{lem}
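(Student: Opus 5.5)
The plan is to follow Frederickson's two-phase construction of $r$-divisions for planar graphs, with the planar separator theorem replaced by the weighted separator theorem of Alon, Seymour, and Thomas. That theorem gives a constant $c_t$ with the following property. For every $K_t$-minor-free graph on $N$ vertices with nonnegative vertex weights, there is a set $S$ of at most $c_t\sqrt N$ vertices such that every component of the graph minus $S$ carries at most half (in particular at most $2/3$) of the total weight. We group the components of the graph minus $S$ into two sides, each of weight at most $2/3$, and add $S$ to both sides. Since subgraphs of $K_t$-minor-free graphs are $K_t$-minor-free, the separator can be reapplied to every piece.

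\emph{Phase 1 (size reduction).} Starting from $G$, we repeatedly split every piece with more than $r$ vertices, using uniform weights, into two pieces $A\cup S$ and $B\cup S$. Each piece that is split has more than $r$ vertices. Using the $2/3$ balance, a standard charging argument shows that the recursion tree has $O(n/r)$ internal nodes, and hence $O(n/r)$ leaves, each with at most $r$ vertices. This bound must also absorb the duplication of separator vertices, since the total size grows only by $O(n/\sqrt r)$. We bound that growth with the usual recurrence for the total number of boundary vertices,
\[
\mathcal B(N)\leq c_t\sqrt N+\mathcal B(\theta N+c_t\sqrt N)+\mathcal B((1-\theta)N+c_t\sqrt N),\qquad \tfrac13\leq\theta\leq\tfrac23,
\]
with $\mathcal B(N)=0$ for $N\leq r$. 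This solves to $\mathcal B(n)=O(n/\sqrt r)$ once $r\geq r_t$ is large enough that the additive $c_t\sqrt N$ terms are dominated.

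\emph{Phase 2 (boundary reduction).} A leaf piece $R$ may still have $b_R>\beta_t\sqrt r$ boundary vertices. We repeatedly split such a piece using weight $1$ on its boundary vertices and $0$ elsewhere. After $O(b_R/\sqrt r)$ splits every resulting piece has $O(\sqrt r)$ boundary vertices. These splits create $O(b_R/\sqrt r)$ new pieces and $O(b_R/\sqrt r)\cdot c_t\sqrt r=O(b_R)$ new boundary vertices. Summing over leaves and using $\sum_R b_R=O(n/\sqrt r)$ gives $O(n/r)$ additional pieces. Vertex counts never increase beyond $r$ in this phase, because each side is contained in a piece of size at most $r$ (the pieces may need to be allowed size $r+O(\sqrt r)$ and then $r$ rescaled by a constant).

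\emph{Converting pieces into a division.} Finally, we assign each edge of $G$ to one piece containing both of its endpoints. Such a piece exists because every split keeps each edge inside one side together with $S$. This yields the edge partition $E_1\sqcup\cdots\sqcup E_p$. Connectivity of $G$ guarantees that every vertex lies on an edge and so belongs to some region. A vertex lying in two regions was a separator vertex in some split, so the bound $|\partial R_i|\leq\beta_t\sqrt r$ follows from Phase 2. Empty regions are discarded.

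I expect the main obstacle to be the accounting in Phase 1. The piece count and the boundary total must be bounded simultaneously, while controlling the blow-up caused by duplicated separator vertices; this is where the threshold $r_t$ enters.
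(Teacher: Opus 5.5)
Your proposal is correct and follows the argument the paper relies on. The paper gives no proof of its own; it cites Yuster's Lemma~2.3, which is exactly Frederickson's two-phase $r$-division with the Alon--Seymour--Thomas weighted separator in place of the planar one. One point is worth making explicit, because $G$ need not have bounded degree as in Frederickson's setting: a vertex that belongs to $k\geq 1$ separators lies in exactly $k+1$ pieces, so $\sum_R b_R$ is at most twice the total separator size that your recurrence bounds.
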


\subsection{Sunflower extraction for minor-free graphs}

The next lemma combines minor-free density (Theorem~\ref{thm:complete-minor-density}), linear neighborhood complexity (Lemma~\ref{lem:minor-free-neighborhood-complexity}), and the sunflower lemma with repetitions (Lemma~\ref{lem:indexed-sunflower}) to find a sunflower in a sufficiently large region of a graph with an excluded $K_t$. In the proof of Theorem~\ref{thm:main-fixed-minor}, we will use it iteratively to extract sunflowers from each region in a division of $G$. In the lemma statement, one should think of $U$ as the interior of the region and $B$ as the boundary.

For $t\geq 3$, recall the constant $\mu_t$ from Theorem~\ref{thm:complete-minor-density} and fix 
$$\Delta_t=\lceil 8\mu_t\rceil. $$

\begin{lem}[Sunflower extraction in a large region]
\label{lem:minor-free-region-sunflower}
For every integer $t\geq 3$ and every real $A\geq 1$, there is an integer $M_{t,A}$ with the following property. Let $J$ be $K_t$-minor-free with
\[
   V(J)=U\sqcup B,\qquad
   |U|=m\geq M_{t,A},\qquad
   |B|\leq A\sqrt m.
\]
Then there is an independent set $X\subseteq U$ such that $d_J(x)\leq\Delta_t$ for every $x\in X$, the neighborhoods $N_J(x)$, $x\in X$, form a sunflower whose petals lie in $U$, and
\[
|X|\geq 2^{\Delta_t}(2\Delta_t+1)+3.
\]
\end{lem}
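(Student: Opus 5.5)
The plan is to pass through three filters in turn: low degree, then a common boundary trace, then a sunflower inside $U$. Each filter loses only a constant or $O(\sqrt m)$ factor, so for $m$ large enough the survivors still exceed $2^{\Delta_t}(2\Delta_t+1)+3$.

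\textbf{Step 1 (low-degree vertices).} Put $N=|V(J)|=m+|B|\leq m+A\sqrt m\leq 2m$ once $m\geq A^2$.
\begin{itemize}
\item By Theorem~\ref{thm:complete-minor-density}, $\sum_v d_J(v)<2\mu_t N\leq 4\mu_t m$.
\item Hence fewer than $4\mu_t m/(\Delta_t+1)<m/2$ vertices have degree exceeding $\Delta_t=\lceil 8\mu_t\rceil$.
\item So the set $U_0\subseteq U$ of vertices of $U$ with $d_J\leq\Delta_t$ has $|U_0|\geq m/2$.
\item $J[U_0]$ has maximum degree at most $\Delta_t$. Greedy selection, as in Lemma~\ref{lem:low-degree-independent}, gives an independent set $X_0\subseteq U_0$ with $|X_0|\geq m/(2(\Delta_t+1))$.
\end{itemize}

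\textbf{Step 2 (common boundary trace).} For $x\in X_0$, split $N_J(x)=(N_J(x)\cap B)\sqcup(N_J(x)\cap U)$.
\begin{itemize}
\item Lemma~\ref{lem:minor-free-neighborhood-complexity}, applied to $J$ and $B$, gives at most $\nu_t|B|+1\leq \nu_tA\sqrt m+1$ distinct traces $N_J(x)\cap B$.
\item By pigeonhole there is a subset $X_1\subseteq X_0$ on which the trace equals a fixed set $T\subseteq B$, with
\[
|X_1|\geq\frac{m}{2(\Delta_t+1)(\nu_tA\sqrt m+1)}=\Omega_{t,A}(\sqrt m).
\]
\end{itemize}

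\textbf{Step 3 (sunflower inside $U$).} Consider the indexed family $(N_J(x)\cap U)_{x\in X_1}$ of sets of size at most $\Delta_t$.
\begin{itemize}
\item Set $k=2^{\Delta_t}(2\Delta_t+1)+3$.
\item Choose $M_{t,A}$ large enough that $m\geq A^2$ and the Step 2 bound exceeds $\Delta_t!(k-1)^{\Delta_t+1}$. This is possible since that bound grows like $\sqrt m$ while the threshold is a constant depending only on $t$.
\item Lemma~\ref{lem:indexed-sunflower} then yields $X\subseteq X_1$ with $|X|=k$ such that $N_J(x)\cap U=C\sqcup P_x$, with pairwise disjoint $P_x\subseteq U$.
\item The full neighborhoods are $N_J(x)=(T\cup C)\sqcup P_x$. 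This is a sunflower with core $T\cup C$ and petals in $U$.
\item $X$ is independent and has degrees at most $\Delta_t$, since it is a subset of $X_0$.
\item Repeated sets in the indexed family are harmless. Identical neighborhoods give empty petals, which is allowed, and $X$ consists of distinct vertices because the family is indexed by vertices.
\end{itemize}

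I do not expect a genuine obstacle. The only point needing care is Step 2: the boundary traces must be collapsed to a single one before applying the sunflower lemma. Otherwise boundary vertices could appear in petals, and the requirement that petals lie in $U$ would fail. The linear neighborhood complexity bound is exactly what makes this pigeonhole lose only a factor $O(\sqrt m)$ rather than something exponential in $\Delta_t$.
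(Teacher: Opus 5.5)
Your proposal is correct and follows essentially the same route as the paper's proof. Like the paper, you get a large low-degree independent set in $U$ from Mader's edge bound and greedy selection, pigeonhole it onto a single boundary trace using linear neighborhood complexity, and then apply the sunflower lemma with repetitions to the $U$-parts of the neighborhoods, absorbing the common trace into the core.
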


\begin{proof}
Let $\mu=\mu_t$, and let $\nu_t$ be given by Lemma~\ref{lem:minor-free-neighborhood-complexity}. We first prove:

\begin{claim}\label{claim:minor-free-low-degree-trace}
There is a constant $c_{t,A} > 0$ and an independent set $I_0\subseteq U$ of size at least $c_{t,A}\sqrt m$ such that every vertex of $I_0$ has degree at most $\Delta_t$ and all vertices of $I_0$ have the same neighborhood in $B$.
\end{claim}

\begin{proof}
Assume that $M_{t,A}$ is large enough that $|B|\leq A\sqrt m\leq m$. By Theorem~\ref{thm:complete-minor-density},
\[
|E(J)|<\mu|V(J)|=\mu(m+|B|) \leq 2\mu m.
\]
Hence
\[
\sum_{u\in U}d_J(u)<4\mu m.
\]
Since $\Delta_t\geq 8\mu$, at least $m/2$ vertices of $U$ have degree at most $\Delta_t$. Greedy selection among them gives an independent set $I\subseteq U$ with
\[
|I|\geq\frac{m}{2(\Delta_t+1)}.
\]

Applying Lemma~\ref{lem:minor-free-neighborhood-complexity} with the boundary set $B$ gives
\[
\left|\{N_J(v)\cap B:v\in V(J)\setminus B\}\right|
\leq\nu_t|B|+1
\leq(\nu_t A+1)\sqrt m.
\]
Hence, partitioning the vertices $x\in I$ by the intersections $N_J(x)\cap B$ yields a class $I_0$ of size at least $c_{t,A}\sqrt m$, for a constant $c_{t,A}>0$.
\end{proof}

Fix $I_0$ as in the claim and let $K=N_J(x)\cap B$, for any $x\in I_0$, be its common neighborhood in $B$. Put
\[
k_0=2^{\Delta_t}(2\Delta_t+1)+3.
\]
Increasing $M_{t,A}$ if necessary, we may assume that
\[
c_{t,A}\sqrt m>\Delta_t!(k_0-1)^{\Delta_t+1}.
\]
For $x\in I_0$, write
\[
N_J(x) = F_x \sqcup K,
\]
so that $F_x\subseteq U$ and $|F_x|\leq\Delta_t$. Apply Lemma~\ref{lem:indexed-sunflower} to the indexed family $(F_x)_{x\in I_0}$ with rank parameter $\Delta_t$ and sunflower size $k_0$. It gives a set $X\subseteq I_0$ of size $k_0$ such that the sets $F_x$, $x\in X$, form a sunflower. If $T\subseteq U$ is its core and $P_x=F_x\setminus T$, then
\[
N_J(x)=(K\sqcup T)\sqcup P_x\qquad(x\in X),
\]
and the petals $P_x$ are pairwise disjoint subsets of $U$. This completes the proof of Lemma~\ref{lem:minor-free-region-sunflower}.
\end{proof}

\subsection{Proof of Theorem~\ref{thm:main-fixed-minor}}

Put $ t=|V(H)|$ and assume w.l.o.g that $t \geq 3$. Then $H\preccurlyeq K_t$, so every $H$-minor-free graph is $K_t$-minor-free. It is enough to prove the theorem when $G$ is connected. Indeed, let $C_1,\ldots,C_s$ be the components of $G$, choose $v_i\in V(C_i)$, and let $\widehat G$ be obtained by adding the `bridge' edges $v_iv_{i+1}$ for $i\in[s-1]$. It is easy to see that $\widehat G$ is still $K_t$-minor-free.
%

Let $\alpha_t,\beta_t,r_t$ be the constants given by Lemma~\ref{lem:minor-free-r-division}. Put
\begin{equation}
\label{eq:minor-free-parameters}
\eta=\frac{1}{4\alpha_t},
\qquad
A=\frac{\beta_t}{\sqrt\eta}.
\end{equation}
Choose an integer $r\geq r_t$, depending only on $t$, such that
\begin{equation}
\label{eq:minor-free-r-choice}
\frac{\alpha_t\beta_t}{\sqrt r}\leq\frac{1}{4},
\qquad
\eta r\geq M_{t,A},
\end{equation}
where $M_{t,A}$ is given by Lemma~\ref{lem:minor-free-region-sunflower}, and set $$c_H=1/(2r).$$
If $n<r$, then $\SGdim(G)\geq 1>c_H n$, so we may assume that $n\geq r$.

We now apply Lemma~\ref{lem:minor-free-r-division} with clique parameter $t$ and division parameter $r$. We obtain a division of $G$ into regions $R_1,\ldots,R_p$ satisfying
\begin{equation}
\label{eq:minor-free-division-bounds}
p\leq\alpha_t\frac nr,\qquad
|V(R_i)|\leq r,\qquad
|\partial R_i|\leq\beta_t\sqrt r.
\end{equation}
Put $Z=\bigcup_i\partial R_i$ and $U_i=V(R_i)\setminus Z$. The sets $U_i$ partition $V(G)\setminus Z$.

\begin{claim}\label{claim:many-large-interiors}
Let $q$ be the number of regions satisfying $|U_i|\geq\eta r$. Then
\begin{equation}
\label{eq:many-large-interiors}
q\geq\left\lceil\frac{n}{2r}\right\rceil.
\end{equation}
\end{claim}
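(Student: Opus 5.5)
The plan is a counting argument: show that the interiors $U_i$ cover most of $V(G)$, then show that small interiors cannot account for much of it.

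\emph{Step 1: the boundary $Z$ is small.} Since $G$ is connected with $n\geq r\geq 2$ vertices, every vertex is an endpoint of some edge, so every vertex lies in at least one region. Hence
\[
V(G)=Z\sqcup\bigsqcup_i U_i.
\]
By~\eqref{eq:minor-free-division-bounds},
\[
|Z|\leq\sum_i|\partial R_i|\leq p\,\beta_t\sqrt r\leq\alpha_t\frac nr\,\beta_t\sqrt r=\frac{\alpha_t\beta_t}{\sqrt r}\,n\leq\frac n4,
\]
where the last inequality is~\eqref{eq:minor-free-r-choice}. Therefore $\sum_i|U_i|\geq 3n/4$.

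\emph{Step 2: small interiors contribute little.} Regions with $|U_i|<\eta r$ contribute in total less than
\[
p\,\eta r\leq\alpha_t\frac nr\cdot\frac{r}{4\alpha_t}=\frac n4,
\]
using $\eta=1/(4\alpha_t)$ from~\eqref{eq:minor-free-parameters}. Consequently the $q$ large regions satisfy $\sum_{\text{large}}|U_i|>3n/4-n/4=n/2$.

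\emph{Step 3: conclude.} Each $|U_i|\leq|V(R_i)|\leq r$, so the inequality from Step 2 gives $qr>n/2$, that is, $q>n/(2r)$. Since $q$ is an integer, $q\geq\lceil n/(2r)\rceil$. (If $n/(2r)$ is an integer, strictness still gives $q\geq n/(2r)$, which equals the ceiling.)

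The only delicate point is Step 1's claim that every vertex lies in some region, which is exactly where connectivity and $n\geq 2$ are used; otherwise isolated vertices could escape all regions. Everything else is direct arithmetic with the constants fixed in~\eqref{eq:minor-free-parameters} and~\eqref{eq:minor-free-r-choice}.
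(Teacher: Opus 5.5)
Your proposal is correct and matches the paper's argument: bound $|Z|\leq n/4$ via the $r$-division, bound the small interiors by $p\eta r\leq n/4$, and divide the remaining $\geq n/2$ vertices by the region size bound $r$. Your remark that connectivity (with $n\geq 2$) puts every vertex in some region, so that $Z$ and the $U_i$ cover $V(G)$, makes explicit a step the paper uses without comment.
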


\begin{proof}
By~\eqref{eq:minor-free-r-choice} and~\eqref{eq:minor-free-division-bounds}, the total boundary satisfies
\begin{equation}
\label{eq:minor-free-total-boundary}
|Z|
\leq\sum_i|\partial R_i|
\leq\frac{\alpha_t\beta_t}{\sqrt r}\,n
\leq\frac n4.
\end{equation}
The interiors with fewer than $\eta r$ vertices contain altogether at most
\[
p\eta r\leq n/4
\]
vertices, using~\eqref{eq:minor-free-parameters} and~\eqref{eq:minor-free-division-bounds}. Together with~\eqref{eq:minor-free-total-boundary}, this shows that the remaining interiors contain at least $n/2$ vertices. Each has at most $r$ vertices by~\eqref{eq:minor-free-division-bounds}, proving Claim~\ref{claim:many-large-interiors}.
\end{proof}

Relabel the heavy interiors as $U_1,\ldots,U_q$. Every edge incident to $U_i$ belongs to the region $R_i$, and therefore
\begin{equation}
\label{eq:minor-free-boundary-neighborhood}
B_i:=N_G(U_i)\setminus U_i\subseteq\partial R_i\subseteq Z.
\end{equation}
Using~\eqref{eq:minor-free-parameters},~\eqref{eq:minor-free-r-choice}, and~\eqref{eq:minor-free-division-bounds} gives
\[
|B_i|\leq\beta_t\sqrt r\leq A\sqrt{|U_i|},
\qquad
|U_i|\geq\eta r\geq M_{t,A}.
\]

Together with~\eqref{eq:minor-free-boundary-neighborhood}, these bounds allow us to apply Lemma~\ref{lem:minor-free-region-sunflower} on each $J_i=G[U_i\sqcup B_i]$ to obtain an independent set $X_i\subseteq U_i$ whose neighborhoods form a sunflower with petals in $U_i$. Let $K_i$ be the core of this sunflower and, for $x\in X_i$, put
\[
P_x=N_{J_i}(x)\setminus K_i\subseteq U_i.
\]

By~\eqref{eq:minor-free-boundary-neighborhood}, we have $N_{J_i}(x)=N_G(x)$ for $x\in X_i$. Hence the same core and petals apply in $G$, and
\begin{equation}
\label{eq:minor-free-block-bounds}
d_G(x)\leq\Delta_t\quad(x\in X_i),
\qquad
|X_i|\geq 2^{\Delta_t}(2\Delta_t+1)+3.
\end{equation}

By~\eqref{eq:minor-free-boundary-neighborhood}, no edge joins two distinct interiors $U_i,U_j$: every neighbor of $U_i$ outside $U_i$ lies in $B_i\subseteq Z$, whereas $U_j\cap Z=\emptyset$. Hence
\[
X=X_1\sqcup\cdots\sqcup X_q
\]
is independent. The petals are pairwise disjoint: this holds within each block by the sunflower property, while petals from different blocks lie in disjoint interiors. Moreover, $K_i\subseteq U_i\cup B_i$. Thus a petal from block $i$ avoids $K_i$ by the sunflower property and avoids every $K_j$, $j\neq i$, because $U_i$ is disjoint from $U_j\cup B_j$.

By~\eqref{eq:minor-free-block-bounds} and the preceding observations, we may apply Lemma~\ref{lem:iterated-fixed-core-book} (intersecting cores book) on $G$ and $X=X_1\sqcup\cdots\sqcup X_q$. Together with~\eqref{eq:many-large-interiors}, the lemma gives
\[
\SGdim(G)\geq q+1\geq\frac{n}{2r}=c_H n.
\]
This completes the proof of Theorem~\ref{thm:main-fixed-minor}.
\qed

\section{Random graphs}
\label{sec:random-out}

Recall the symmetrized random $D$-out graph $\widehat G_{n,D}$ from Definition~\ref{def:symmetrized-random-out}. We restate the main theorem proven in this section.

\begin{randomoutrestatement}[Random graph bound]
There are absolute constants $A>0$ and $K>1$ such that, for all sufficiently large $n$ and all integers $K\log n\leq D\leq n-1$,
\[
\SGdim(\widehat G_{n,D})
\leq
\max\left\{2,\left\lceil\dfrac{An\log n}{D^2\log(D/\log n)}\right\rceil\right\}
\]
with probability $1-o(1)$ as $n\to\infty$.
\end{randomoutrestatement}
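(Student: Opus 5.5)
The plan follows the two-ingredient strategy from the proof overview: a counting bound on ``ordinary graphs'' combined with a union bound over them. For a configuration $P=\{p_1,\ldots,p_n\}$ of distinct points, let $O(P)$ be the \emph{ordinary graph} on $[n]$, whose edges are the pairs $ij$ for which the line $p_ip_j$ contains no third point of $P$. A configuration $P$ is a special-line realization of $G$ exactly when $E(G)\cap E(O(P))=\emptyset$. So it suffices to show that, with probability $1-o(1)$, $\widehat G_{n,D}$ meets $E(O(P))$ for every $P$ with $\affdim(P)=d$, where $d$ is one more than the claimed bound. We may restrict to $d$ exactly, because projecting a higher-dimensional realization generically down to dimension $d$ keeps collinear triples collinear. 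Such a projection can only create new collinearities, so it shrinks $O(P)$ and preserves the realization property.

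\textbf{Step 1: counting ordinary graphs.} We bound the number of distinct graphs $O(P)$ with $\affdim(P)\leq d$ by $2^{Cn\log n}$, uniformly in $d$. First, any configuration is determined up to its combinatorics by its collinearity pattern, and a generic projection to $\R^3$ (or even $\R^2$ together with a check that no new triples appear) preserves that pattern. Hence it is enough to take $P\subset\R^3$, described by $3n$ real parameters. The graph $O(P)$ is determined by the zero pattern of the $O(n^3)$ collinearity polynomials, which have degree $O(1)$. The standard bound on the number of zero patterns of $N$ polynomials of degree $\delta$ in $v$ variables, namely $(eN\delta/v)^v$ (Rónyai--Babai--Ganapathy), then gives at most $(Cn^2)^{3n}=2^{O(n\log n)}$ patterns.

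There is a subtlety here: ordinary graphs of $d$-dimensional sets cannot literally come from $\R^3$. I would instead encode $P\subset\R^d$ directly by $nd$ coordinates, which gives $2^{O(nd\log n)}$ patterns. That is too many when $d$ is large, so I will rely on the paper's claimed $\exp(n\log n)$. The way to get it is that the collinearity pattern depends only on the projective equivalence class, and the ambient dimension only matters through a generic projection to the plane. The $d$-dimensional structure is then recovered by the lower bound of Step 2 applied to the original $P$, while the count applies to $O(P)=O(\pi(P))$ for a projection $\pi$ that introduces no new collinearities. Such a $\pi$ exists by genericity, and planar $\pi(P)$ has only $2n$ parameters.

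\textbf{Step 2: many ordinary pairs in high dimension.} For $\affdim(P)=d\geq 3$, we aim to show that $O(P)$ contains, for a large set $S$ of vertices (say $|S|\geq n/2$), at least roughly $\Omega(d\log(d))$-type many ordinary edges at each vertex, or more precisely enough that $\sum_{v}\min(\deg_{O(P)}(v),\cdot)$ is large. The design-matrix rank bound of \cite{DSW} says that if every point had at least $\delta n$ ordinary... no — it says that if every point lies on special lines with many others then the dimension is at most $12/\delta$. In contrapositive form, this forces many points to each have $\Omega(n/d)$ ordinary partners. I would iterate it on subsets, removing vertices one at a time, to obtain a set of $\geq n/2$ vertices each with ordinary degree $\geq cn/d$. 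I would then combine this with \cite{deZeeuw} (for instance an ordinary-lines result for point sets with few points on each line) to boost the count by a logarithmic factor.

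\textbf{Step 3: union bound.} Fix such a $P$. Each vertex $v$ of $S$ picks $N^+(v)$ uniformly, and it misses its $\geq cn/d$ ordinary partners with probability at most $(1-c/d)^D\leq e^{-cD/d}$, independently over $v$. Therefore the probability that $\widehat G_{n,D}$ misses all of $O(P)$ is at most $e^{-cnD/(2d)}$. Taking $d\approx An\log n/D^2\cdots$ is where the sharper (second-order) estimate from Step 2 is needed: we require the exponent to beat $Cn\log n$, i.e.\ $D\cdot(\text{ordinary degree})/n\gg\log n$. With ordinary degree $\sim n/d$ this gives $d\lesssim D/\log n$, which is weaker than claimed. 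The true bound needs the per-vertex miss probability to be $\exp(-\Omega(D^2\log(D/\log n)/(d\cdot\ldots)))$. I expect this to come from exploiting that a vertex with $D$ random out-neighbors in a special realization must itself be collinear-structured: its $D$ neighbors span lines through it, each carrying a third point, and a counting/entropy argument over these incidences gives the extra factor $D\log(D/\log n)$. This refinement of Step 2 is the main obstacle. What I would try first is to condition on the out-neighborhoods and apply the design-matrix bound to the random graph restricted to high-degree vertices, rather than to the complete graph.
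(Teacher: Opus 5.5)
Your Step 1 is essentially the paper's count. A generic projection to the plane preserves the collinearity status of every triple, so $\Ord(P)=\Ord(\pi(P))$; there is no real subtlety, and the zero-pattern bound gives $|\Ord(n)|\leq e^{O(n\log n)}$.

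\textbf{Where the proposal fails.} The gap is in Steps 2--3, and it is more than a missing refinement. Step 2 as stated is false. Take $n-d+1$ points on a line together with $d-1$ points in general position. This configuration has affine dimension $d$, yet every point on the line has only $d-1$ ordinary partners, so no $n/2$ vertices have ordinary degree $\Omega(n/d)$.

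You have also misused the DSW bound. Its contrapositive gives \emph{some} point with fewer than about $12n/d$ \emph{special} partners. It does not give many points. Moreover, what governs avoidance is the special degree $s_v$, through the factor $\binom{s_v}{D}/\binom{n-1}{D}$, not a weak lower bound on ordinary degree. Your estimate $e^{-cnD/d}$ also gets weaker as $d$ grows, which is the wrong direction.

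More fundamentally, the example shows that a statement valid for all $d$-dimensional configurations can guarantee only about $d$ points of small special degree. The best exponent such an approach can reach is then of order $dD\log n$. Against $e^{Bn\log n}$, this yields only a dimension bound of order $n/D$ up to logarithms, one factor of $D$ short. In the example the avoidance probability is in fact zero, but only because the off-line points have fewer than $D$ special partners, which is exactly the information your approach discards. Also, de Zeeuw's theorem concerns point sets in $\R^3$ with few points on any \emph{plane}, not on any line.

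\textbf{The missing idea.} The avoidance event itself makes $P$ a special-line realization of $\widehat G_{n,D}$, which deterministically has minimum degree at least $D$. So either the probability is zero, or every point of $P$ has at least $D$ special partners. The paper exploits this through a \emph{relative} Sylvester--Gallai bound: delete from the DSW design matrix the columns indexed by points of an $h$-flat $H$ and pass to $\R^d/H$. This gives $|P\setminus H|\geq\frac{D}{12}(\affdim(P)-h)$.

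It then projects $P$ generically to $\R^3$, preserving $\Ord(P)$ and affine independence of $4$-tuples, and splits into two cases.
\begin{itemize}
\item If no plane holds more than $2n/3$ points, de Zeeuw gives $\Omega(n^2)$ ordinary lines and avoidance probability at most $e^{-cDn}$.
\item Otherwise some plane holds $n-q>2n/3$ points. Each off-plane point $x$ has at most $2q-2$ special partners: a special line through $x$ and an in-plane point needs a third, off-plane point, and each such point determines at most one in-plane point. So the $D$ out-neighbors of $x$ all land among these partners with probability at most $(2q/n)^D$, independently over the $q$ off-plane points. Meanwhile the relative bound with $h=2$ forces $q\geq D(r-1)/12$.
\end{itemize}

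Together these give avoidance probability $\exp(-\Omega(D^2r\log(1+n/(Dr))))$ for every $O\in\Ord(n)_{\geq r+1}$. One factor of $D$ comes from the out-degree and the other from the $\Omega(Dr)$ off-plane points. The union bound against $e^{Bn\log n}$ then yields $r\approx An\log n/(D^2\log(D/\log n))$.

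Your closing idea of applying the design-matrix bound to the random graph points in this direction. However, the relative bound, the heavy-plane dichotomy and the $2q-2$ count are the actual content of the proof, and none of them appears in your proposal.
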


We develop the necessary machinery in the next three subsections, followed by the proof of the Theorem.

\subsection{The number of ordinary graphs}

The union bound in the proof of Theorem~\ref{thm:main-random-tradeoff} will need the following bound on the number of graphs arising from ordinary lines in point configurations.

\begin{define}[Ordinary graphs]
\label{def:ordinary-graphs}
Let $P=(p_1,\ldots,p_n)$ be a labeled configuration of distinct points in $\R^d$. The \emph{ordinary graph} $\Ord(P)$ is the graph on $[n]$ in which $ij$ is an edge if and only if the line $p_ip_j$ contains no other point of $P$. For $r,n\geq 1$, let
\[
\Ord(n)_{\geq r}=\{\Ord(P):|P|=n\text{ and } \affdim(P) \geq r\}, \qquad \Ord(n) = \bigcup_r \Ord(n)_{\geq r}
\]
where the configurations may lie in any ambient dimension.
\end{define}

Thus a labeled configuration $P$ is a special-line realization of $G$ precisely when
\begin{equation}
\label{eq:ordinary-avoidance-criterion}
E(G)\cap E(\Ord(P))=\emptyset.
\end{equation}
Hence, to show that $\SGdim(G) < r$ we have to show that $G$ `hits' an edge in all graphs $O \in \Ord(n)_{\geq r}$. To upper bound the number of ordinary graphs we will need the following polynomial zero-pattern bound of R\'onyai, Babai, and Ganapathy.

\begin{prop}[Zero-patterns of low degree maps {\cite{RonyaiBabaiGanapathy}}]
\label{prop:zero-pattern-bound}
Let $M\geq N\geq 1$ and $r\geq 1$, and let $F_1,\ldots,F_M$ be real polynomials of degree at most $t$ in $N$ variables. Then the number of zero patterns
\[
\{i\in[M]:F_i(x)=0\},
\qquad x\in\R^N,
\]
is at most
\[
\binom{Mt-(t-2)N}{N}.
\]
\end{prop}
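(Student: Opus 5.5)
The plan is to use the linear-algebra (triangular independence) method for zero patterns, and then bound the dimension of the space in which the resulting polynomials live.

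\textbf{Independence step.} Let $S_1,\ldots,S_R$ be the distinct zero patterns that occur, and fix a witness $x_j\in\R^N$ for each, so that $\{i:F_i(x_j)=0\}=S_j$. Put
\[
G_j=\prod_{i\notin S_j}F_i .
\]
Then $G_j(x_k)\neq 0$ if and only if no $F_i$ with $i\notin S_j$ vanishes at $x_k$, that is, if and only if $S_k\subseteq S_j$. Order the patterns by nondecreasing size. The matrix $\bigl(G_j(x_k)\bigr)_{j,k}$ then has a nonzero diagonal and zeros on one side of it, so it is triangular and nonsingular. Hence the $G_j$ are linearly independent, and $R$ is at most the dimension of any vector space that contains all of them.

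\textbf{Choosing the ambient space.} The naive choice is all polynomials of degree at most $Mt$. This gives only $\binom{Mt+N}{N}$, and the target $\binom{Mt-(t-2)N}{N}$ is smaller. Writing
\[
Mt-(t-2)N=\bigl((M-N)t+N\bigr)+N,
\]
the target is exactly the dimension of the polynomials of degree at most $(M-N)t+N$. So the $G_j$ have to be replaced by polynomials of degree at most $(M-N)t+N$ that still satisfy the same triangularity. My plan for this is the following.

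\begin{enumerate}
\item Perturb the witnesses $x_j$ within their pattern cells.
\item Replace each product $\prod_{i\notin S_j}F_i$ by a product over only $|[M]\setminus S_j|-N$ of its factors, and multiply by at most $N$ linear forms $\ell$ chosen to be nonzero at $x_j$.
\item Choose the $N$ discarded factors so that the corresponding $N$ polynomials cut out $x_j$ with multiplicity one. Their nonvanishing at the other witnesses is then enforced by the linear forms (a Bézout/transversality choice).
\end{enumerate}

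This keeps the degree at most $(M-N)t+N$ whenever $|[M]\setminus S_j|\geq N$.

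Patterns whose complement has fewer than $N$ elements already have $\deg G_j<Nt$. These I would handle separately by padding with linear forms. The hypothesis $M\geq N$ is what makes the count $(M-N)t+N$ nonnegative and uniform across both cases.

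\textbf{Main obstacle.} The hard step is proving that the modified family is still triangular. The discarded factors could vanish at some $x_k$ with $S_k\not\subseteq S_j$, and they could also be the only obstruction that kills $G_j$ at such an $x_k$; either would destroy the triangularity. I would first try the cleaner route used in such refinements. Pass to generic linear combinations $\widetilde F=\Lambda F$, and note that the zero pattern of $x$ under $F$ is determined by the zero set of $\widetilde F$ on coordinate subspaces. Then apply the independence argument inside the subalgebra $\R[F_1,\ldots,F_M]$, which has transcendence degree at most $N$. The dimension of its span of squarefree products of total $F$-degree at most $M$ is then bounded by Noether normalization over $N$ generic combinations of degree $t$ together with linear correction terms. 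This Hilbert-function estimate is where the saving of $(t-2)N$ must come from, and it is the step I expect to need the most care. If it resists, the fallback is to cite \cite{RonyaiBabaiGanapathy} directly, since the paper uses the proposition only as a black box.
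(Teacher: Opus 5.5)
The paper gives no proof of this proposition; it imports it from R\'onyai--Babai--Ganapathy. Measured against the statement itself, your proposal has a genuine gap. The triangular independence step is correct, but on its own it only yields $\binom{Mt+N}{N}$. The refinement to $\binom{Mt-(t-2)N}{N}=\binom{(M-N)t+2N}{N}$, which is the content of the statement, is never established.

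The degree-reduction plan does not work as written. The factors $F_i$ with $i\notin S_j$ are by definition nonzero at $x_j$, so they cannot ``cut out $x_j$''. Their role in $G_j$ is to vanish at the later witnesses $x_k$ with $S_k\not\subseteq S_j$. After dropping $N$ of them, you would need $N$ linear forms, nonzero at $x_j$, that vanish at every later witness only the dropped factors were killing. Nothing guarantees this, and perturbing witnesses does not help when cells are finite.

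Your case split is also in the wrong variable. Patterns with $|[M]\setminus S_j|<N$ are not automatically below the target degree: for $M=N\geq 3$ and $F_i=x_i^2$, the pattern $\{1\}$ gives $\deg G_j=2(N-1)>N=(M-N)t+N$. Padding with linear forms can only raise degrees. The Noether-normalization route is not carried out and produces no concrete estimate, so it does not close the gap either.

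The missing idea is that no degree reduction is needed. The only patterns whose $G_j$ has degree above $(M-N)t$ are those with $|S_j|<N$, and these can simply be counted.

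\begin{itemize}
\item Run your triangular argument on the subfamily of patterns with $|S_j|\geq N$; it works verbatim for any subfamily. There $\deg G_j\leq (M-|S_j|)t\leq (M-N)t$, so there are at most $\binom{(M-N)t+N}{N}$ such patterns.
\item The remaining patterns are subsets of $[M]$ of size less than $N$, so there are at most $\sum_{s<N}\binom{M}{s}$ of them.
\end{itemize}

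By Vandermonde,
\[
\binom{(M-N)t+2N}{N}=\sum_{s=0}^{N}\binom{(M-N)t+N}{s}\binom{N}{N-s}.
\]
The $s=N$ term is exactly the first count. For $t\geq 1$ we have $(M-N)t+N\geq M$, so every term with $s<N$ is at least $\binom{M}{s}$. For $t=0$ there is only one pattern. Adding the two counts gives the proposition.

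As a side remark, consider the paper's only use of this bound, in the proof of Lemma~\ref{lem:ordinary-graph-entropy}, with $t=2$, $M=\binom n3$ and $N=2n$. There your unrefined bound $\binom{2M+2n}{2n}$ already gives $\exp(O(n\log n))$.
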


\begin{lem}[Number of ordinary graphs]
\label{lem:ordinary-graph-entropy}
For every $n\geq 1$,
\[
|\Ord(n)|\leq\exp(Bn\log n),
\]
where $B$ is an absolute constant.
\end{lem}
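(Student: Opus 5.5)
The plan is to encode the ordinary graph of a configuration as a zero pattern of a family of low-degree polynomials and then apply Proposition~\ref{prop:zero-pattern-bound}.

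\emph{Step 1: reduce the ambient dimension.} The ordinary graph depends only on the collinearity structure, that is, on which triples $(p_i,p_j,p_k)$ are collinear. A generic projection to $\R^2$ preserves collinearity: it keeps collinear triples collinear, and by genericity it makes no non-collinear triple collinear. It also keeps the points distinct. So we may assume $P\subset\R^2$, with coordinates $p_i=(x_i,y_i)$. This gives $N=2n$ variables.

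\emph{Step 2: encode collinearity.} For each triple $i<j<k$ let
\[
F_{ijk}=\det\begin{pmatrix}1&1&1\\ x_i&x_j&x_k\\ y_i&y_j&y_k\end{pmatrix},
\]
a polynomial of degree $t=2$. Also include, for each pair $i<j$, the polynomials $x_i-x_j$ and $y_i-y_j$ of degree $1\le 2$; these record coincidences. Then $M=\binom n3+2\binom n2\le n^3$ for $n\ge 3$, and $M\ge N$ once $n$ is moderately large. Small $n$ can be absorbed into the constant $B$.

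For a configuration of distinct points, the zero pattern determines exactly which triples are collinear. Hence it determines $\Ord(P)$: the pair $ij$ is an edge iff $F_{ijk}\neq0$ for every $k\ne i,j$. So $|\Ord(n)|$ is at most the number of zero patterns.

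\emph{Step 3: count.} With $t=2$, Proposition~\ref{prop:zero-pattern-bound} bounds the number of zero patterns by
\[
\binom{2M}{2n}\le (2M)^{2n}\le (2n^3)^{2n}=\exp(O(n\log n)).
\]
This yields $|\Ord(n)|\le \exp(Bn\log n)$ for an absolute constant $B$.

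\emph{Main obstacle.} The step needing the most care is Step 1: it must be justified that a projection exists which is injective on $P$ and preserves non-collinearity of every triple. Each bad event (two points merging, or a non-collinear triple becoming collinear) happens for a projection lying in a proper algebraic subset of the space of linear maps $\R^d\to\R^2$. There are finitely many such events, so a generic projection avoids all of them. The remaining steps are routine bookkeeping.
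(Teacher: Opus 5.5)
Your proposal is correct and follows the paper's proof essentially verbatim: a generic affine projection to $\R^2$ preserving distinctness and collinearity, the quadratic determinants $F_{ijk}$ in $N=2n$ variables, and Proposition~\ref{prop:zero-pattern-bound} with $t=2$ giving $\binom{2M}{2n}=\exp(O(n\log n))$. The extra coincidence polynomials $x_i-x_j$, $y_i-y_j$ are harmless but unnecessary (the paper instead assumes $n\geq 5$ so that $\binom n3\geq 2n$), and your justification of the generic-projection step is more explicit than the paper's.
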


\begin{proof}
Assume w.l.og that $n\geq 5$ and apply a generic affine map to $\R^2$ preserving distinctness and the collinearity status of every triple. This preserves the ordinary graph, so it is enough to count configurations in $\R^2$.

Write $p_i=(p_{i,1},p_{i,2})$. For $1\leq i<j<k\leq n$, let
\[
F_{ijk}
=(p_{j,1}-p_{i,1})(p_{k,2}-p_{i,2})
-(p_{j,2}-p_{i,2})(p_{k,1}-p_{i,1}).
\]
The triple $p_i,p_j,p_k$ is collinear precisely when $F_{ijk}$ vanishes. Thus the zero pattern of these polynomials determines $\Ord(P)$. There are
$
M=\binom n3
$
quadratic polynomials in $N=2n$ variables and so Proposition~\ref{prop:zero-pattern-bound} gives
\[
|\Ord(n)|
\leq \binom{2M}{N}
=\exp(O(n\log n)).
\]
As was claimed.
\end{proof}

\subsection{A relative Sylvester--Gallai bound}

We use the following terminology from ~\cite{BDWY,DSW}. For a vector $u$, let $\supp(u)$ denote the set of its nonzero coordinates. An $m\times n$ matrix $A$  is a $(q,k,t)$-design matrix if every row has support of size at most $q$, every column has support of size at least $k$, and the supports of any two distinct columns intersect in at most $t$ coordinates. For $0<\delta\leq 1$, distinct points $v_1,\ldots,v_n\in\R^d$ form a $\delta$-SG configuration if, for every $i$, at least $\delta(n-1)$ of the other points lie on special lines through $v_i$. We will need the following result from~\cite{DSW} (strengthening an earlier bound of~\cite{BDWY}).

\begin{prop}[Design-matrix rank bound {\cite{DSW}}]
\label{prop:dsw-design-rank}
Let $A$ be a real or complex $m\times n$ $(q,k,t)$-design matrix. Then
\[
\operatorname{rank}(A)
\geq
\frac{n}{1+q(q-1)t/k}.
\]
\end{prop}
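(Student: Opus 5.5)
The plan is the standard ``scaling plus trace'' argument of~\cite{BDWY,DSW}. Rescaling rows and columns of $A$ by nonzero scalars changes neither the rank nor the support pattern, so the $(q,k,t)$-design conditions persist. I will look for positive diagonal matrices $R,C$ such that $B=RAC$ has every column of squared Euclidean norm $1$ and every row of squared norm $n/m$. Then I set $M=B^\ast B$, an $n\times n$ positive semidefinite Hermitian matrix with $M_{jj}=1$. For such $M$ one has
\[
\operatorname{rank}(A)=\operatorname{rank}(B)\geq\operatorname{rank}(M)\geq\frac{(\operatorname{tr}M)^2}{\sum_{j,j'}|M_{jj'}|^2}.
\]
This follows from Cauchy--Schwarz applied to the nonzero eigenvalues of $M$. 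Since $\operatorname{tr}M=n$, it suffices to bound the off-diagonal Frobenius mass $\sum_{j\neq j'}|M_{jj'}|^2$ by $n\,q(q-1)t/k$.

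For the off-diagonal estimate, write $M_{jj'}=\sum_i\overline{b_{ij}}b_{ij'}$. The sum runs over at most $t$ rows, because the supports of columns $j$ and $j'$ meet in at most $t$ coordinates. Cauchy--Schwarz then gives $|M_{jj'}|^2\leq t\sum_i|b_{ij}|^2|b_{ij'}|^2$. Summing over ordered pairs $j\neq j'$ and swapping the order of summation bounds the total by $t\sum_i\sum_{j\neq j'}|b_{ij}|^2|b_{ij'}|^2$. Each row has at most $q$ nonzero entries and squared norm $n/m$. The inner sum is therefore at most $(1-1/q)(n/m)^2$, and trivially at most $(n/m)^2$. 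Summing over the $m$ rows gives at most $t\,n^2(q-1)/(qm)$.

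Next I count nonzero entries. There are at least $nk$ of them (every column has support at least $k$) and at most $qm$ (every row has support at most $q$). Hence $m\geq nk/q$, and the off-diagonal mass is at most $t\,n(q-1)/k$. This is at most $n\,q(q-1)t/k$, and plugging it into the trace inequality yields the proposition. The extra slack of a factor $q$ is useful because it leaves room for losses in the next step.

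The main obstacle is the scaling step. We need to scale the nonnegative matrix $(|a_{ij}|^2)$ to have row sums $n/m$ and column sums $1$, and exact scalings need not exist for arbitrary support patterns. I would first try to verify the Rothblum--Schneider/Sinkhorn feasibility condition directly from the design property. Failing that, I would settle for an $\varepsilon$-approximate scaling, which exists under a weaker Hall-type condition. With approximate scaling, $\operatorname{tr}M\geq(1-\varepsilon)n$ and the row norms are only approximately equal. I would then bound each row's contribution by (row norm)$^2$ times $(1-1/q)$, using a cruder estimate that costs at most the factor $q$ of slack noted above. Finally I would let $\varepsilon\to0$; since rank is an integer, the bound passes to the limit. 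If even approximate scaling fails for some pattern, I would first delete redundant rows, keeping the design parameters, until the pattern satisfies the scaling condition, and check that this does not decrease $k$.
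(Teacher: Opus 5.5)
The paper does not prove this proposition; it imports it from \cite{DSW}, whose argument is the scaling-plus-trace method you describe. Your trace inequality and your off-diagonal Cauchy--Schwarz estimate are correct. There is, however, a genuine gap at the scaling step you flag yourself, and none of your three fallbacks closes it.

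A scaling with unit column norms and \emph{uniform} row norms $n/m$ can fail to exist even approximately. Approximate scalability to prescribed marginals is governed by exactly the Rothblum--Schneider (Hall-type) condition. For uniform rows that condition reads $|N(W)|\geq m|W|/n$ for every column set $W$, where $N(W)$ is the set of rows meeting $W$. The design property only gives $|N(W)|\geq k|W|/q$, and $m$ can be much larger than $nk/q$. So passing to $\varepsilon$-scalings buys nothing.

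Concretely, take the $4\times 6$ matrix with rows $e_1-e_2$, $e_3-e_4$, $e_3-e_5$, $e_3-e_6$. It is a $(2,1,1)$-design matrix. Columns $1$ and $2$ are supported only on row $1$, so that row must carry squared mass $\approx 2$ rather than $n/m=3/2$. No row can be deleted, since each row is the entire support of some column. So the ``delete redundant rows'' fallback is unavailable as well.

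The missing idea is to make the row targets non-uniform. Discard zero rows, let $k_j\geq k$ be the support size of column $j$, and put $Y_{ij}=1/k_j$ whenever $a_{ij}\neq 0$. Then $Y$ has exactly the support pattern of $A$ and column sums $1$. Its row sums $\rho_i$ satisfy $\rho_i\leq q/k$ and $\sum_i\rho_i=n$.

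By the classical scaling theorem of Sinkhorn and Menon, $(|a_{ij}|^2)$ can then be scaled to these marginals. That theorem says a nonnegative matrix is scalable to given positive marginals iff some nonnegative matrix with the same pattern has those marginals; approximate scaling followed by a limit would also do.

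With this scaling, your own computation bounds the off-diagonal mass by
\[
t(1-1/q)\sum_i\rho_i^2\leq t(1-1/q)(q/k)\sum_i\rho_i=(q-1)tn/k.
\]
This gives $\operatorname{rank}(A)\geq n/(1+(q-1)t/k)$, which implies the stated bound. The counting inequality $m\geq nk/q$ is no longer needed.
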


The next result of~\cite{DSW} supplies the linear dependencies that convert a $\delta$-SG configuration into a design matrix.

\begin{lem}[SG to design-matrix~\cite{DSW}]
\label{lem:dsw-design-matrix}
Let $v_1,\ldots,v_n\in\R^d$ be a $\delta$-SG configuration, and let $V$ be the $n\times d$ matrix whose $i$th row is $v_i$. Put $K=\lceil\delta(n-1)\rceil$. Then there is an $m\times n$ $(3,3K,6)$-design matrix $A$, every row of which has support of size exactly three, such that $AV=0$.
\end{lem}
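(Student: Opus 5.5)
The plan is to build $A$ line by line: one row for each triple of collinear points, with the triples on each special line organised so that every pair of points on that line lies in a bounded number of triples.

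First, the coefficients. If $v_i,v_j,v_k$ are distinct and collinear, then $v_k-v_i=c(v_j-v_i)$ for some real $c\notin\{0,1\}$. Hence $(c-1)v_i-cv_j+v_k=0$, and all three coefficients are nonzero. Such a vector, placed as a row of $A$ (coefficient $c-1$ in column $i$, $-c$ in column $j$, $1$ in column $k$), has support exactly three and satisfies (row)$\cdot V=0$. So every row built from a collinear triple automatically satisfies both the support condition with $q=3$ and $AV=0$.

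Second, the intersection bound $t=6$ is local to lines. Two distinct points lie on a unique line, so a row whose support contains $\{i,j\}$ must come from a triple on the line $v_iv_j$. It therefore suffices to do the following on each special line $\ell$ with $r\geq 3$ points. Choose a multiset $\mathcal T_\ell$ of $3$-subsets of the points on $\ell$ such that every pair of points of $\ell$ lies in \emph{at least} $6$ and \emph{at most} $6$ members of $\mathcal T_\ell$; a multiplicity between $6$ and $6$ is what I will aim for. Then take all rows arising from all $\mathcal T_\ell$. Two columns $i,j$ share support only in triples from $\mathcal T_{v_iv_j}$, so there are at most $6$ of them, giving $t=6$.

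Third, the column bound. Fix $i$. At least $K$ other points $j$ lie on special lines through $i$, and each such pair $\{i,j\}$ lies in at least $6$ rows. A row containing $i$ covers exactly two pairs through $i$. So the number of rows containing $i$ is at least $6K/2=3K$, which is the required $k=3K$.

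The main obstacle is the combinatorial lemma itself: for every $r\geq 3$, a multiset of triples on $[r]$ in which every pair has multiplicity exactly $6$ (or at least between $6$ and $6$ after adjusting the counting in the third step). For $r=3$, take the single triple six times. For general $r$ I would first try a Latin-square construction on $\mathbb Z_r$: for each ordered pair $(i,j)$ with $i\neq j$, take the triple $\{i,j,k\}$ with $k=-i-j$. Each unordered triple $\{i,j,k\}$ with $i+j+k\equiv 0$ then arises from its $6$ ordered pairs, so every pair lies in $6$ ordered incidences. The defect is the degenerate pairs where $k\in\{i,j\}$, i.e.\ $2i+j\equiv 0$ or $i+2j\equiv 0$, and there are only $O(r)$ of them. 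I would repair these with a second shifted system such as $k=-i-j+s$, or by choosing among a few linear rules $k=\alpha i+\beta j$ depending on the pair. The goal is to keep every pair's multiplicity within a window $[c,6]$, and then recheck that the third step still yields $3K$. Small $r$ (say $r\leq 5$) would be handled by explicit small designs.
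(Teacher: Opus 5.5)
Your reduction is the right one, and it is the argument behind the cited result. The paper quotes this lemma from \cite{DSW} without proof; the construction goes back to \cite{BDWY}. It uses one row per collinear triple with coefficients $(c-1,-c,1)$, triples chosen line by line, $t=6$ because two points determine their line, and $6K/2=3K$ for the columns.

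The gap is the step you flag as the main obstacle, and that step is the entire combinatorial content of the lemma. First, there is no slack there. A triple through $v_i$ covers exactly two pairs through $v_i$, so with pair multiplicity at most $6$ the point $v_i$ lies in at most $3(r-1)$ triples on a line with $r$ points. Equality holds only if every pair through $v_i$ has multiplicity exactly $6$. Since $v_i$ may lie on a single special line with exactly $K+1$ points, you need an exact $2$-$(r,3,6)$ design on every line. Your fallback window $[c,6]$ with $c<6$ gives only $cK/2<3K$.

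Second, your $\mathbb{Z}_r$ system cannot be patched. With $k=-i-j$, the nondegenerate triples are precisely six copies of each zero-sum $3$-subset. So every pair has multiplicity $6$ or $0$, and the uncovered pairs are $\{a,-2a\}$. A triple $\{a,-2a,x\}$ added to repair one of them raises two saturated pairs to $7$, unless those pairs are uncovered too. A repair by additions therefore needs the uncovered pairs to decompose into triangles. For $r=4$ they are $\{0,2\},\{1,2\},\{2,3\}$, a star, so this fails. A shifted system $k=-i-j+s$ sends good pairs to $12$. Choosing the rule pair by pair destroys the Latin property that your count relies on.

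The missing idea is to start from a Latin square $L$ on $[r]$ with $L_{ii}=i$ for all $i$ (a diagonal, or idempotent, Latin square). These exist for every $r\neq 2$. Take the $r(r-1)$ ordered triples $(i,j,L_{ij})$ with $i\neq j$.
\begin{itemize}
\item The row and column conditions give $L_{ij}\neq L_{ii}=i$ and $L_{ij}\neq L_{jj}=j$, so no triple degenerates.
\item For each of the three choices of two positions and each of the two orders, a pair $\{a,b\}$ completes to exactly one triple. The diagonal never interferes, since $L_{aa}=a\neq b$. So every pair lies in exactly $6$ triples.
\item Every point lies in exactly $3(r-1)$ triples, $r-1$ for each position.
\end{itemize}
This is exactly what your second and third steps need.

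Your $r=3$ case is $L_{ij}=-i-j \bmod 3$. A linear rule $L_{ij}=\alpha i+\beta j$ on $\mathbb{Z}_r$ has the required properties exactly when $\alpha,\beta$ are units with $\alpha+\beta\equiv 1 \pmod r$. For instance, $L_{ij}=(i+j)/2$ works when $r$ is odd. When $r$ is even, two units sum to an even residue, so no linear rule works.

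For even $r\geq 4$, start from the square $(i+j)/2$ on $\mathbb{Z}_{r-1}$. Adjoin a new row, column and symbol $\infty$. Copy each entry of the off-diagonal transversal $\{(i,i+1)\}$ into the cells $(i,\infty)$ and $(\infty,i+1)$. Then write $\infty$ in the cells $(i,i+1)$ and $(\infty,\infty)$. Alternatively, simply quote the existence of idempotent Latin squares.
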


We can now prove our first preliminary lemma, which shows that a configuration of points realizing the special lines of a graph $G$ with high minimum degree cannot have almost all of its points on a proper flat. (Notice that this result implies Theorem~\ref{thm:quantitative-sg} by taking $h=0$).
\begin{lem}[Relative Sylvester--Gallai bound]
\label{lem:relative-sg}
Let $P=(p_1,\ldots,p_n)$ be a special-line realization of a graph $G$ with $\delta(G)\geq D\geq 1$, and let $H$ be an affine $h$-flat. Then
\[
        |P \setminus H| \geq
        \frac{D}{12}\bigl(\affdim(P)-h\bigr).
\]
\end{lem}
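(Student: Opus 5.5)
The plan is to run the design-matrix argument behind Theorem~\ref{thm:quantitative-sg}, but only on the columns indexed by points outside $H$, after first quotienting out the linear span of $H$. If $\affdim(P)\leq h$ there is nothing to prove, so assume $\affdim(P)>h$. Set $T=\{i:p_i\notin H\}$ and $t=|T|$.

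\textbf{Step 1: lift to homogeneous coordinates.} Put $\hat v_i=(p_i,1)\in\R^{d+1}$. The linear span of the $\hat v_i$ then has dimension $\affdim(P)+1$. Let $W\subseteq\R^{d+1}$ be the $(h+1)$-dimensional linear span of the lifts of points of $H$, and let $\pi\colon\R^{d+1}\to\R^{d+1}/W$ be the quotient map. Every $\hat v_i$ with $p_i\in H$ is sent to $0$. Hence the vectors $\pi(\hat v_i)$, $i\in T$, span a space of dimension at least $\affdim(P)-h$.

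\textbf{Step 2: produce the design matrix.} Since $\delta(G)\geq D$ and every edge of $G$ lies on a special line, each $p_i$ has at least $D$ other points on special lines through it. So $P$ is a $\delta$-SG configuration with $\delta=D/(n-1)$, and Lemma~\ref{lem:dsw-design-matrix} applies with $K=D$. It gives an $m\times n$ $(3,3D,6)$-design matrix $A$ with $AV=0$.

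The point I expect to need care is that I need $A\hat V=0$, not just $AV=0$. In other words, each row must be an affine dependency, with coefficients summing to zero. I would justify this by recalling that in the DSW construction each row comes from a collinear triple $p_i,p_j,p_k$. A collinear triple satisfies a relation $\alpha p_i+\beta p_j+\gamma p_k=0$ with $\alpha+\beta+\gamma=0$. Alternatively, I would apply Lemma~\ref{lem:dsw-design-matrix} directly to the lifted vectors $\hat v_i$. These form a configuration of the same kind, because collinearity of points corresponds to coplanarity of their lifts with the origin, and the lemma is insensitive to this change.

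\textbf{Step 3: restrict to the columns in $T$.} Since $\pi(\hat v_i)=0$ for $i\notin T$, we get $A_T\,\pi(\hat V_T)=0$, where $A_T$ is the submatrix of $A$ with columns in $T$. The matrix $A_T$ inherits the design properties. Row supports have size at most $3$. Column supports are unchanged, so each has size at least $3D$. Pairwise column intersections have size at most $6$. So $A_T$ is a $(3,3D,6)$-design matrix with $t$ columns; zero rows are harmless.

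\textbf{Step 4: apply the rank bound.} Proposition~\ref{prop:dsw-design-rank} gives
\[
\operatorname{rank}(A_T)\geq \frac{t}{1+3\cdot2\cdot6/(3D)}=\frac{t}{1+12/D}.
\]
The columns of $\pi(\hat V_T)$ lie in the kernel of $A_T$, so
\[
\affdim(P)-h\;\leq\;\operatorname{rank}\pi(\hat V_T)\;\leq\; t-\operatorname{rank}(A_T)\;\leq\; t\cdot\frac{12/D}{1+12/D}\;\leq\;\frac{12t}{D}.
\]
This is exactly $|P\setminus H|=t\geq \frac{D}{12}(\affdim(P)-h)$. Taking $h=0$ and $H$ a single point of $P$ recovers Theorem~\ref{thm:quantitative-sg}.
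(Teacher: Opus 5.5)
Your proof is correct and follows the paper's argument: you obtain the $(3,3k,6)$-design matrix from Lemma~\ref{lem:dsw-design-matrix}, quotient by $H$, delete the columns of points lying in $H$, and bound the rank of the projected points by $|T|-\operatorname{rank}(A_T)$. The only difference is how you handle the affine-versus-linear issue. The paper first replaces $H$ by an $h$-flat inside $\aff(P)$ and translates so that $H$ is a linear subspace, then applies the lemma to the translated points; you instead homogenize, and your second justification (applying the lemma directly to the lifted points, which form an affinely equivalent $\delta$-SG configuration) is the clean way to get $A\hat V=0$.
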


\begin{proof}
Put $d=\affdim(P)$ and assume w.l.o.g that $h<d$. Replace $H$ by an affine $h$-flat in $\aff(P)$ containing $P\cap H$; this can only decrease $|P\setminus H|$. We may now identify $\aff(P)$ with $\R^d$. After translating by a point of $H$, we may further assume that $H$ is a linear $h$-subspace of $\R^d$.

Put $Q=\{i:p_i\notin H\}$, $k=\lceil D\rceil$, and let $V$ be the matrix whose rows are the vectors $p_i$. Since every $p_i$ has at least $k$ neighbors and every such neighbor lies with $p_i$ on a special line, the points $p_1,\ldots,p_n$ form a $k/(n-1)$-SG configuration. Apply Lemma~\ref{lem:dsw-design-matrix} with $v_i=p_i$, $V$ as above, and $\delta=k/(n-1)$. The lemma gives a $(3,3k,6)$-design matrix $A$ such that $AV=0$.

Let $\pi:\R^d\to\R^d/H$ be the quotient map, and let $X$ be the matrix whose rows are the vectors $\pi(p_i)$, $i\in Q$, in some choice of coordinates on the quotient. Since the vectors $p_i$ span $\R^d$ and the points in $H$ vanish under $\pi$, the rows of $X$ span $\R^d/H$. Hence
\[
\operatorname{rank}(X)=d-h.
\]
In particular, $Q\neq\emptyset$.

Let $A_Q$ be obtained from $A$ by deleting the columns indexed by points in $H$. Applying $\pi$ to $AV=0$ gives $A_QX=0$. Moreover, $A_Q$ is still a $(3,3k,6)$-design matrix: retained column supports and their pairwise intersections are unchanged, while row supports can only decrease. Applying Proposition~\ref{prop:dsw-design-rank} to the matrix $A_Q$, we get
\[
\operatorname{rank}(A_Q)
\geq\frac{|Q|}{1+3\cdot 2\cdot 6/(3k)}
=\frac{|Q|k}{k+12}.
\]
Consequently,
\[
d-h
=\operatorname{rank}(X)
\leq |Q|-\operatorname{rank}(A_Q)
\leq\frac{12|Q|}{k+12}
\leq\frac{12|Q|}{D}.
\]
Since $|Q|=|P\setminus H|$, this completes the proof of Lemma~\ref{lem:relative-sg}.
\end{proof}

\subsection{Avoidance lemmas}

We prove two lemmas on the probability that a random graph avoids the edge set of a given ordinary graph. Here $\widehat G_{n,D}$ is as in Definition~\ref{def:symmetrized-random-out}.

\paragraph{Avoiding a graph in $\R^3$:}

We will need the following theorem of de Zeeuw giving a lower bound on the number of ordinary lines through a configuration in $\R^3$ that does not have too many points on any plane.

\begin{thm}[de Zeeuw {\cite[Theorem~1.3]{deZeeuw}}]
\label{thm:de-zeeuw-ordinary-lines}
For every $0<\alpha<1$, there is $c_\alpha>0$ such that, if a set $P$ of $n$ points in $\R^3$ has at most $\alpha n$ points on any plane, then $P$ spans at least $c_\alpha n^2$ ordinary lines.
\end{thm}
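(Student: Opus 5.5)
The statement is quoted from~\cite{deZeeuw}, and the paper will use it as a black box. Still, here is how I would try to prove it directly, ending with a planar incidence problem. The hypothesis \emph{at most $\alpha n$ points on any plane} is essential: planar sets such as points on a cubic curve can have only $O(n)$ ordinary lines. So the argument must genuinely use the third dimension.

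\emph{Step 1: reduce to the plane.} Project $P$ to a plane $\Pi$ from a generic point $o\in\R^3$. For generic $o$, distinct points stay distinct and collinearity of triples is preserved in both directions. Hence the ordinary lines of $P$ and of its image $P'$ correspond bijectively. Every line $\ell\subset\Pi$ pulls back to a plane through $o$ containing the preimage of $P'\cap\ell$. Therefore $P'$ has at most $\alpha n$ points on any line. This alone is not enough, by the cubic-curve example, so the plane hypothesis must be used more strongly. I would do this through projections from the points of $P$ themselves.

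\emph{Step 2: project from each $p\in P$.} Project $P\setminus\{p\}$ from $p$ to a generic plane, giving a multiset $P_p$ of total weight $n-1$. Its distinct image points correspond to the lines through $p$ spanned by $P$, and an image point of weight one corresponds to an ordinary line through $p$. A line in the image corresponds to a plane through $p$, so every line carries weight at most $\alpha n$.

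Write $m_p$ for the number of distinct image points and $s_p$ for the number of weight-one image points. Then $s_p\geq 2m_p-(n-1)$. It therefore suffices to show that $m_p\geq(\tfrac12+\epsilon)n$ for at least $\epsilon n$ choices of $p$. Summing $s_p$ over these $p$ gives $\Omega(n^2)$ incidences between points and ordinary lines, and each ordinary line is counted at most twice.

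\emph{Step 3: the main obstacle.} Suppose instead that $m_p\leq(\tfrac12+\epsilon)n$ for almost all $p$. Then almost every point has most other points on special lines through it. I would first try a weighted Beck/Szemerédi--Trotter argument in the image $P_p$: few distinct directions with no line carrying more than $\alpha n$ weight should force some line of $P_p$ to carry weight at least $\beta n$. Such a line gives a plane through $p$ containing at least $\beta n$ points. The difficulty is that $\beta$ may be smaller than $\alpha$, so this does not yet contradict the hypothesis.

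To close the gap, I would combine these rich planes over many $p$ with the design-matrix rank bound (Proposition~\ref{prop:dsw-design-rank}), in the style of Lemma~\ref{lem:relative-sg}. The points with high special degree form a near-$\delta$-SG configuration with $\delta$ close to $1$. Via Lemma~\ref{lem:dsw-design-matrix}, such a configuration has affine dimension $O(1/\delta)$ up to $O(1)$ exceptional points. I would push this toward dimension at most $2$, using the fact that two distinct rich planes through many common points must coincide, and thereby force more than $\alpha n$ points onto a single plane. Extracting this strong quantitative statement (dimension exactly $2$ for all but $(1-\alpha)n$ points) is the step I expect to be hardest. If it fails, the fallback is the Green--Tao structure theorem applied to the generic planar projection of Step~1, pulled back along the projection.
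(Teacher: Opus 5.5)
The paper gives no proof of this statement. It is quoted from \cite{deZeeuw} and used as a black box, only with $\alpha=2/3$ in Lemma~\ref{lem:out-graph-avoidance}, so treating it as a citation is correct. The direct argument you sketch, however, has a genuine gap. Step~1 and the inequality $s_p\geq 2m_p-(n-1)$ are fine, but the sufficient condition you reduce to in Step~2 is false under the theorem's hypotheses. Take $n/2$ points on each of two skew lines $\ell_1,\ell_2$. A plane contains at most $n/2+1$ of the points, so the hypothesis holds for every fixed $\alpha>1/2$, including $\alpha=2/3$. All $n^2/4$ cross pairs span ordinary lines. Yet for $p\in\ell_1$ the lines through $p$ spanned by $P$ are $\ell_1$ itself and the $n/2$ lines to $\ell_2$. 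Hence $m_p=n/2+1<(\tfrac12+\epsilon)n$ for \emph{every} $p$ once $n>1/\epsilon$. No argument can produce $\epsilon n$ points with $m_p\geq(\tfrac12+\epsilon)n$, and the contradiction you look for in Step~3 does not exist. The underlying mistake is using $m_p$ as a proxy for $s_p$. The inequality $s_p\geq 2m_p-(n-1)$ is one-sided, and a single rich line through $p$ can absorb almost all of the weight. So the opening inference of Step~3 fails: small $m_p$ does not mean most other points lie on special lines through $p$, and in this example $s_p=n/2$.

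Independently of this, Step~3 does not supply the missing argument, and each proposed tool fails for a concrete reason.
\begin{itemize}
\item \emph{Per-point rich line.} For a single $p$, few image points with bounded line weight do not force a rich line. Suppose the other points come in pairs on $(n-1)/2$ lines through $p$ with generic directions. Then $m_p=(n-1)/2$, while every plane through $p$ contains at most five points. Any contradiction must therefore be global, not extracted one projection at a time.
\item \emph{Design matrices.} Lemma~\ref{lem:dsw-design-matrix} with Proposition~\ref{prop:dsw-design-rank} yields only $\affdim\leq 12/\delta$, which is about $12$ even for $\delta$ near $1$. No dimension bound can give the stability conclusion that more than $\alpha n$ points lie on one plane. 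For $\alpha$ close to $1$, that conclusion is just the contrapositive of the theorem, so ``pushing toward dimension $2$'' restates the goal.
\item \emph{Green--Tao fallback.} The structure theorem needs at most $Kn$ ordinary lines, whereas the contrapositive assumption only gives fewer than $c_\alpha n^2$. So it does not apply.
\end{itemize}
A correct proof has to control $s_p$ (or the ordinary lines) directly and treat configurations with rich lines and rich planes separately. As written, the proposal does not prove the theorem; within this paper, the right move is simply to cite \cite{deZeeuw}.
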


In the following lemma, we quantify the probability of missing all edges in an ordinary graph as a function of the number of points outside a plane.

\begin{lem}[Avoiding an ordinary graph in $\R^3$]
\label{lem:out-graph-avoidance}
Let $n$ be sufficiently large and let $1\leq D\leq n-1$ be an integer. Let $P\subset\mathbb R^3$ be a labeled $n$-point configuration that affinely spans $\mathbb R^3$, and write $O=\Ord(P)$.

\begin{enumerate}
\item If no plane contains more than $2n/3$ points of $P$, then
      \[
          \Pr\bigl(E(\widehat G_{n,D})\cap E(O)=\emptyset\bigr)
          \le \exp(-cDn)
      \]
for an absolute constant $c>0$.

\item Suppose that a plane $H$ contains $n-q>2n/3$ points of $P$. Then
      \[
          \Pr\bigl(E(\widehat G_{n,D})\cap E(O)=\emptyset\bigr)
          \le
          \left(\frac{2q}{n}\right)^{Dq},
      \]
with the left-hand side equal to zero when $D>2q-2$.
\end{enumerate}
\end{lem}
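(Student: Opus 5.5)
Both parts rest on the independence of the out-neighborhoods $N^+(v)$ in Definition~\ref{def:symmetrized-random-out}. The event $E(\widehat G_{n,D})\cap E(O)=\emptyset$ is contained in the event that, for every $v$, $N^+(v)$ avoids $N_O(v)$. These are independent events over $v$, so
\[
\Pr\bigl(E(\widehat G_{n,D})\cap E(O)=\emptyset\bigr)\le\prod_{v}\Pr\bigl(N^+(v)\cap N_O(v)=\emptyset\bigr)=\prod_v\binom{n-1-d_O(v)}{D}\Big/\binom{n-1}{D}\le\prod_v\Bigl(1-\frac{d_O(v)}{n-1}\Bigr)^{D}.
\]
This product vanishes if some $v$ has $d_O(v)>n-1-D$. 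So everything reduces to lower bounds on the degrees of the ordinary graph $O$.

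\emph{Part 1.} By Theorem~\ref{thm:de-zeeuw-ordinary-lines} with $\alpha=2/3$, $P$ spans at least $c_{2/3}n^2$ ordinary lines, so $\sum_v d_O(v)=2|E(O)|\ge 2c_{2/3}n^2$. Using $1-x\le e^{-x}$, the product is at most
\[
\exp\Bigl(-D\sum_v d_O(v)/(n-1)\Bigr)\le\exp(-2c_{2/3}Dn),
\]
which gives Part 1 with $c=2c_{2/3}$.

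\emph{Part 2.} Let $Q$ be the $q$ points off $H$. For $u\in Q$ and $w\in P\cap H$, the line $uw$ meets $H$ only at $w$. So the pair $uw$ is ordinary unless the line contains another point of $Q$. Fixing $u$, each other point $u'\in Q$ lies on at most one line through $u$, and that line meets $H$ in at most one point. Hence at most $q-1$ points $w\in H$ are blocked, and
\[
d_O(u)\ge (n-q)-(q-1)=n-2q+1 .
\]
Then $1-d_O(u)/(n-1)\le (2q-2)/(n-1)\le 2q/n$, using $q<n/3$ and $n$ large. Bounding the factors for vertices in $H$ by $1$ and taking the product over the $q$ vertices of $Q$ gives $(2q/n)^{Dq}$. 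For the zero claim: if $D>2q-2$, then $\binom{n-1-d_O(u)}{D}=0$, since $n-1-d_O(u)\le 2q-2<D$. So each $u\in Q$ deterministically hits $O$ and the probability is $0$.

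The only step that needs care is the ordinary-pair count in Part 2, namely making sure a line through $u\in Q$ meets $H$ in at most one point. This uses that $u\notin H$. Note $q\ge 1$ because $P$ spans $\R^3$.
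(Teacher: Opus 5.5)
Your proof is correct and follows the paper's argument essentially step for step. Independence of the out-neighborhoods gives the product formula, de Zeeuw's bound on ordinary lines handles Part~1, and in Part~2 each off-plane point has at most $q-1$ special partners in $H$, which leaves at most $2q-2$ admissible out-neighbors. (The inequality $(2q-2)/(n-1)\leq 2q/n$ holds whenever $q\leq n$, so you do not need $q<n/3$ there.)
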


\begin{proof}
\emph{First case.} Apply Theorem~\ref{thm:de-zeeuw-ordinary-lines} with $\alpha=2/3$. It gives
\[
|E(O)|\geq \Omega(n^2).
\]
By independence,
\begin{align*}
\Pr\bigl(E(\widehat G_{n,D})\cap E(O)=\emptyset\bigr)
&=\prod_{v=1}^n\frac{\binom{n-1-d_O(v)}{D}}{\binom{n-1}{D}}\\
&\leq\prod_{v=1}^n\exp\left(-\frac{D\,d_O(v)}{n-1}\right)\\
&=\exp\left(-\frac{2D\,|E(O)|}{n-1}\right)
\leq\exp(-cDn).
\end{align*}

\emph{Second case.} Put $Q=P\setminus H$. Fix $x\in Q$. At most $q-1$ points $y\in P\cap H$ form a special pair with $x$: a third point on $xy$ must lie in $Q\setminus\{x\}$, and each such point determines at most one intersection with $H$.

If $E(\widehat G_{n,D})\cap E(O)=\emptyset$, every member of $N^+(x)$ must therefore lie in a set of size at most
\[
(q-1)+(q-1)=2q-2:
\]
the first term counts $Q\setminus\{x\}$ and the second counts special neighbors in $H$. The probability is zero if $D>2q-2$. Otherwise, again by independence,
\begin{align*}
\Pr\bigl(E(\widehat G_{n,D})\cap E(O)=\emptyset\bigr)
&\leq\prod_{x\in Q}\frac{\binom{2q-2}{D}}{\binom{n-1}{D}}\\
&\leq\left(\frac{2q-2}{n-1}\right)^{Dq}
\leq\left(\frac{2q}{n}\right)^{Dq}.
\end{align*}
\end{proof}

\paragraph{Avoidance in higher dimensions.}
The next lemma combines the relative Sylvester--Gallai bound with the preceding avoidance estimate. It gives a uniform bound for every ordinary graph with a high-dimensional realization.

\begin{lem}[High-dimensional avoidance]
\label{lem:uniform-high-dimensional-avoidance}
There is an absolute constant $b>0$ such that the following holds for all sufficiently large $n$. Let $1\leq D\leq n-1$ and $r\geq 2$ be integers. For every $O\in\Ord(n)_{\geq r+1}$,
\[
\Pr\bigl(E(\widehat G_{n,D})\cap E(O)=\emptyset\bigr)
\leq
\exp\left(-bD^2r\log\left(1+\frac{n}{Dr}\right)\right).
\]
\end{lem}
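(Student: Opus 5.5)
Fix $O=\Ord(P)$ with $\affdim(P)=d\geq r+1$. The event $E(\widehat G_{n,D})\cap E(O)=\emptyset$ means exactly that $P$ is a special-line realization of $G=\widehat G_{n,D}$, so the plan is to find many vertices whose out-sets $N^+(v)$ are forced into small sets. The tool is Lemma~\ref{lem:relative-sg}. It would apply to the subgraph on points whose degree is at least $D$, but we cannot use that lemma directly on a random event. So instead we reason deterministically about $P$ and bound the probability vertex by vertex, using the independence of the sets $N^+(v)$, exactly as in Lemma~\ref{lem:out-graph-avoidance}.

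\textbf{Step 1 (structure of $P$).} For each $v$, let $S_v$ be the set of $u\neq v$ such that $uv$ is a special pair of $P$. On the avoidance event $N^+(v)\subseteq S_v$, so
\[
\Pr\leq\prod_v\binom{|S_v|}{D}\Big/\binom{n-1}{D}\leq\prod_v\left(\frac{|S_v|}{n-1}\right)^{D}.
\]
It therefore suffices to show $\sum_v\log\bigl((n-1)/|S_v|\bigr)\gtrsim Dr\log(1+n/(Dr))$, i.e. to find about $Dr$ vertices each with $|S_v|\lesssim Dr$. Let $T=\{v:|S_v|<D'\}$ for a threshold $D'=cDr$ to be fixed. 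Consider the graph $G'$ of special pairs of $P$ restricted to high-special-degree vertices. Projection arguments are awkward here, so I would instead apply Lemma~\ref{lem:relative-sg} to the full special graph of $P$ restricted to $V\setminus T$. The catch is that a special line through two high vertices may have its third point in $T$.

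\textbf{Step 2 (handling $T$).} I would handle this by taking the flat $H=\aff(T)$ and arguing as follows. If $|T|\geq Dr$, then at least $Dr$ vertices have $|S_v|<cDr$, and the product above already gives $\exp(-D\cdot Dr\log(n/(cDr)))$, which is the claimed bound. Otherwise $|T|<Dr$, so $\dim\aff(T)<Dr$; but this is not small compared with $r$. So the actual plan is different. I would choose a flat $H$ of dimension $h\leq d/2$ containing as many points as possible and apply the design-matrix proof of Lemma~\ref{lem:relative-sg} to the points of high special degree. Deleting columns of $T$ keeps the rows with support $\leq 3$, and column supports stay $\geq 3\cdot$(special degree), except for a loss coming from $T$. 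The dimension bound then gives $|P\setminus H|\gtrsim D'(d-h)$ unless $T$ spans a large part of the dimension. In each case we obtain either $\gtrsim Dr$ points of low special degree, or a flat $H$ with $q=|P\setminus H|$ points outside it, where $q\ll n$ but $q\gtrsim r$.

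\textbf{Step 3 (the flat case).} Mimicking part 2 of Lemma~\ref{lem:out-graph-avoidance}: for $x\notin H$, $|S_x|\leq 2(q-1)$, since special partners in $H$ need a third point outside $H$, and each such point determines at most one partner in $H$. This gives probability at most $(2q/n)^{Dq}$, and it vanishes unless $D\leq 2q$. Combined with Lemma~\ref{lem:relative-sg}, which forces $q\gtrsim D(d-h)/12$ for the points of large degree, we minimize $Dq\log(n/2q)$ over $q\gtrsim Dr$. The map $q\mapsto q\log(n/q)$ is increasing for $q\leq n/e$, which yields $\exp(-bD^2r\log(1+n/(Dr)))$. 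When $q$ is comparable to $n$, the trivial lower bound of order $Dn$ from de Zeeuw-type density (Theorem~\ref{thm:de-zeeuw-ordinary-lines} after a generic projection to $\R^3$ preserving collinearity) covers the remaining range.

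\textbf{Main obstacle.} The hard part is the dichotomy in Step 2: making Lemma~\ref{lem:relative-sg} tolerate a set $T$ of low-degree points. I would first try replacing $H$ by $\aff(H\cup T)$, whose dimension increases by at most $|T|$, and choose the threshold so that $|T|\leq r/2$ unless the low-degree case already wins. If that fails, I would iterate, peeling off low-degree points.
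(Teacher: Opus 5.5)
\textbf{The gap is in Step~2.} You flag it as the main obstacle and leave it unresolved. The suggestions of enlarging $H$ to $\aff(H\cup T)$ or ``iterating, peeling off low-degree points'' are never carried out. Without them you never obtain the flat with $q\gtrsim Dr$ that Step~3 needs.

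The missing idea is that this obstacle does not exist, and your own Step~1 formula shows why. The avoidance event is exactly $\bigcap_v\{N^+(v)\subseteq S_v\}$. Its probability therefore equals $\prod_v\binom{|S_v|}{D}/\binom{n-1}{D}$, which is $0$ as soon as some $|S_v|<D$, and then the lemma holds trivially. Otherwise every point has at least $D$ special partners. So $P$ is a special-line realization of a graph of minimum degree at least $D$: its special-pair graph, or equivalently any outcome of $\widehat G_{n,D}$ lying in the event. Hence Lemma~\ref{lem:relative-sg} applies deterministically to all of $P$.

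Your claim that the lemma ``cannot be used directly on a random event'' is the misstep. The lemma is a statement about the fixed configuration $P$, and nonemptiness of the event is all it needs. There is no low-degree set $T$ to tolerate, and the threshold $cDr$ is unnecessary. This is exactly how the paper argues: it assumes some outcome avoids $E(O)$ and uses $\delta(\widehat G_{n,D})\geq D$.

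\textbf{Setting up Step~3.} Once that is in place, organize Step~3 as the paper does, rather than via ``a flat of dimension $h\leq d/2$ with as many points as possible''.
\begin{itemize}
\item Project $P$ generically to $\R^3$, preserving $O$ and the affine independence of $4$-tuples. Split on whether some plane contains more than $2n/3$ of the projected points.
\item If no plane does, part~1 of Lemma~\ref{lem:out-graph-avoidance} (via de Zeeuw's theorem) gives probability $\exp(-cDn)$. This suffices because $D^2r\log(1+n/(Dr))=O(Dn)$.
\item If a plane contains $n-q>2n/3$ of them, the corresponding points of $P$ have every $4$-subset affinely dependent. Hence they lie on a flat $H$ of dimension at most $2$ with $|P\setminus H|=q$.
\item Your count for $x\notin H$ then gives probability at most $(2q/n)^{Dq}$. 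Lemma~\ref{lem:relative-sg} gives $q\geq D(\affdim(P)-2)/12\geq Dr/24$.
\item Monotonicity of $t\mapsto t\log(1+n/t)$, together with $q<n/3$, finishes the proof.
\end{itemize}

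Your version has two problems here. First, choosing a flat of dimension $h\leq d/2$ does not by itself produce de Zeeuw's hypothesis, which concerns planes after projection, i.e.\ $2$-flats of $P$. Second, for $d=3$ the constraint $h\leq d/2$ only permits lines. Taking $h=2$ fixes both, since $d-2\geq r-1$ is already of order $r$.
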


\begin{proof}
Fix $O\in\Ord(n)_{\geq r+1}$ and choose a realization $P$ of $O$ with affine dimension at least $r+1$. Choose a generic projection $\overline P\subseteq\R^3$ preserving $O$ and affine independence of every affinely independent four-tuple. Since $r\geq 2$, the projected configuration spans $\R^3$. Put
\begin{equation}
\label{eq:uniform-avoidance-T}
T=D^2r\log\left(1+\frac{n}{Dr}\right).
\end{equation}

Suppose first that no plane contains more than $2n/3$ points of $\overline P$ and apply the first case of Lemma~\ref{lem:out-graph-avoidance} to $\overline P$. Since $\log(1+u)\leq u$,~\eqref{eq:uniform-avoidance-T} gives $T\leq Dn$, and the lemma gives the required bound after decreasing $b$.

Suppose instead that a plane $\Pi$ contains $n-q>2n/3$ points of $\overline P$. Apply the second case of Lemma~\ref{lem:out-graph-avoidance} to $\overline P$ to get
\begin{equation}
\label{eq:uniform-avoidance-plane-bound}
\Pr\bigl(E(\widehat G_{n,D})\cap E(O)=\emptyset\bigr)
\leq
\exp\left(-Dq\log\frac{n}{2q}\right),
\end{equation}
and the probability is zero if $D>2q-2$. We may therefore assume that $D\leq 2q-2$ and that there is an outcome for which $E(\widehat G_{n,D})\cap E(O)=\emptyset$. For this outcome, $P$ realizes $\widehat G_{n,D}$.

By the properties of the generic projection, there is an affine flat $H$ of dimension at most two containing exactly $n-q$ points of $P$. Applying Lemma~\ref{lem:relative-sg} and using $\delta(\widehat G_{n,D})\geq D$, gives
\begin{equation}
\label{eq:uniform-avoidance-q-bound}
q\geq\frac D{12}\bigl(\affdim(P)-2\bigr)
\geq\frac{D(r-1)}{12}
\geq\frac{Dr}{24}.
\end{equation}
The function $t\mapsto t\log(1+n/t)$ is increasing on $(0,\infty)$. By~\eqref{eq:uniform-avoidance-q-bound}, and since $q<n/3$ we have
\[
 T = D \cdot Dr\log\left(1+\frac{n}{Dr}\right)
\leq
D \cdot 24q\log\left(1+\frac{n}{24q}\right)
\leq
24Dq\log\frac{n}{2q}.
\]
Combining this with ~\eqref{eq:uniform-avoidance-plane-bound}  proves the required estimate after decreasing $b$.
\end{proof}

\subsection{Proof of Theorem~\ref{thm:main-random-tradeoff}}

Let $B$ be the constant in Lemma~\ref{lem:ordinary-graph-entropy} so that
\[
|\Ord(n)|\leq\exp(Bn\log n),
\]
and let $b>0$ be the constant in Lemma~\ref{lem:uniform-high-dimensional-avoidance}.

\begin{claim}
\label{claim:random-master-bound}
For every integer $r\geq 2$,
\begin{equation}
\label{eq:random-master-bound}
\Pr\bigl(\SGdim(\widehat G_{n,D})>r\bigr)
\leq
\exp\left(Bn\log n-bD^2r\log\left(1+\frac{n}{Dr}\right)\right).
\end{equation}
\end{claim}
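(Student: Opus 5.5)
The plan is a union bound over ordinary graphs, combining Lemma~\ref{lem:ordinary-graph-entropy} with Lemma~\ref{lem:uniform-high-dimensional-avoidance}. First I rephrase the event. If $\SGdim(\widehat G_{n,D})>r$, then there is a special-line realization $P$ of $\widehat G_{n,D}$ with $\affdim(P)\geq r+1$. Label $P$ by the vertices, so $P=(p_1,\ldots,p_n)$. By the criterion~\eqref{eq:ordinary-avoidance-criterion}, the graph $O=\Ord(P)$ satisfies $E(\widehat G_{n,D})\cap E(O)=\emptyset$. By definition, $O\in\Ord(n)_{\geq r+1}$. Therefore
\[
\bigl\{\SGdim(\widehat G_{n,D})>r\bigr\}\subseteq\bigcup_{O\in\Ord(n)_{\geq r+1}}\bigl\{E(\widehat G_{n,D})\cap E(O)=\emptyset\bigr\}.
\]
This inclusion is the key step. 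It replaces an uncountable family of configurations by the finite family $\Ord(n)_{\geq r+1}\subseteq\Ord(n)$ of combinatorial types.

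Next I apply the union bound. Each term is bounded by Lemma~\ref{lem:uniform-high-dimensional-avoidance}, which is valid because $r\geq 2$ and $1\leq D\leq n-1$. It gives the bound $\exp\bigl(-bD^2r\log(1+n/(Dr))\bigr)$ uniformly in $O$. The number of terms is at most $|\Ord(n)|\leq\exp(Bn\log n)$ by Lemma~\ref{lem:ordinary-graph-entropy}. Multiplying these two bounds gives~\eqref{eq:random-master-bound}.

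The argument involves almost no real obstacle. The one point to check is that the labeled configuration's ordinary graph is taken on the same vertex labels as $\widehat G_{n,D}$. Lemma~\ref{lem:ordinary-graph-entropy} counts labeled ordinary graphs, so the union is indexed correctly. I also need that "sufficiently large $n$" is inherited from Lemma~\ref{lem:uniform-high-dimensional-avoidance}. The substantive work was already done in that lemma and in Lemma~\ref{lem:ordinary-graph-entropy}.
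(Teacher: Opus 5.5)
Your proposal is correct and follows essentially the same route as the paper. Both show that the event $\SGdim(\widehat G_{n,D})>r$ is contained in the union, over $O\in\Ord(n)_{\geq r+1}$, of the events $E(\widehat G_{n,D})\cap E(O)=\emptyset$, and then take a union bound using Lemma~\ref{lem:ordinary-graph-entropy} for the number of terms and Lemma~\ref{lem:uniform-high-dimensional-avoidance} for each term.
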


\begin{proof}
If $\SGdim(\widehat G_{n,D})>r$, then $\widehat G_{n,D}$ has a special-line realization $P$ with $\affdim(P)\geq r+1$. Put $O=\Ord(P)$. Then $O\in\Ord(n)_{\geq r+1}$ and
\[
E(\widehat G_{n,D})\cap E(O)=\emptyset.
\]
A union bound over $O\in\Ord(n)_{\geq r+1}$, followed by Lemmas~\ref{lem:ordinary-graph-entropy} and~\ref{lem:uniform-high-dimensional-avoidance}, proves the claim.
\end{proof}

We now choose $A\geq 1$ so that
\begin{equation}
\label{eq:random-A-choice}
\frac{bA}{3}\geq B+1,
\end{equation}
and then choose $K>1$ sufficiently large so that
\begin{equation}
\label{eq:random-K-choice}
\frac{t\log t}{2A}\geq\sqrt t
\qquad(t\geq K).
\end{equation}
Put
\begin{equation}
\label{eq:random-x-parameters}
L=\log\left(\frac{D}{\log n}\right),
\qquad
x=\frac{An\log n}{D^2L},
\end{equation}
so that our target dimension bound can be stated as
\begin{equation}
\label{eq:random-r-parameter}
r=\max\{2,\lceil x\rceil\}.
\end{equation}
By~\eqref{eq:random-master-bound} and~\eqref{eq:random-A-choice}, it is enough to prove
\begin{equation}
\label{eq:random-target-exponent}
D^2r\log\left(1+\frac{n}{Dr}\right)
\geq\frac A3n\log n.
\end{equation}

Suppose first that $x\geq 1$. Then
\[
x\leq r\leq 2x.
\]
Moreover,
\[
\frac{n}{Dr}
\geq
\frac{n}{2Dx}
=
\frac{DL}{2A\log n}
\geq
\sqrt{\frac{D}{\log n}},
\]
where the first inequality follows from $r\leq 2x$, the equality follows from~\eqref{eq:random-x-parameters}, and the last inequality follows from~\eqref{eq:random-K-choice} applied with $t=D/\log n$; the hypothesis $D\geq K\log n$ ensures that $t\geq K$. It follows that
\[
\log\left(1+\frac{n}{Dr}\right)
\geq
\log\sqrt{\frac{D}{\log n}}
=
\frac L2.
\]
Since $r\geq x$, the left-hand side of~\eqref{eq:random-target-exponent} is therefore at least
\[
D^2x\frac L2
=
\frac A2n\log n,
\]
which is stronger than~\eqref{eq:random-target-exponent}.

Suppose now that $x<1$. Then $r=2$. Since $L\leq\log n$ for all sufficiently large $n$,~\eqref{eq:random-x-parameters} gives $D^2>An$. The function $t\mapsto t^2\log(1+n/(2t))$ is increasing on $(0,\infty)$, so the left-hand side of~\eqref{eq:random-target-exponent} is at least
\[
2An\log\sqrt{\frac{n}{4A}}
=
An\log\left(\frac{n}{4A}\right)
\geq
\frac A3n\log n
\]
for all sufficiently large $n$. Thus~\eqref{eq:random-target-exponent} holds in both cases. Combining Claim~\ref{claim:random-master-bound} with~\eqref{eq:random-target-exponent} and using the choice of $A$ gives
\[
\Pr\bigl(\SGdim(\widehat G_{n,D})>r\bigr)
\leq\exp(-n\log n).
\]
This completes the proof of Theorem~\ref{thm:main-random-tradeoff}.
\qed

\bibliographystyle{alpha}
\bibliography{GraphSG}

@article{AlonSeymourThomas,
  author  = {Alon, N. and Seymour, P. and Thomas, R.},
  title   = {A separator theorem for nonplanar graphs},
  journal = {J. Amer. Math. Soc.},
  volume  = {3},
  number  = {4},
  pages   = {801--808},
  year    = {1990},
  doi     = {10.1090/S0894-0347-1990-1065053-0}
}

@article{ALWZ,
  author  = {Alweiss, R. and Lovett, S. and Wu, K. and Zhang, J.},
  title   = {Improved bounds for the sunflower lemma},
  journal = {Ann. of Math. (2)},
  volume  = {194},
  number  = {3},
  pages   = {795--815},
  year    = {2021},
  doi     = {10.4007/annals.2021.194.3.5}
}

@inproceedings{BDWY,
  author    = {Barak, B. and Dvir, Z. and Wigderson, A. and Yehudayoff, A.},
  title     = {Rank bounds for design matrices with applications to combinatorial geometry and locally correctable codes},
  booktitle = {Proceedings of the 43rd Annual ACM Symposium on Theory of Computing (STOC 2011)},
  pages     = {519--528},
  year      = {2011},
  doi       = {10.1145/1993636.1993705}
}

@article{BellChueluechaWarnke,
  author  = {Bell, T. and Chueluecha, S. and Warnke, L.},
  title   = {Note on sunflowers},
  journal = {Discrete Math.},
  volume  = {344},
  number  = {7},
  pages   = {Paper No. 112367, 3 pp.},
  year    = {2021},
  doi     = {10.1016/j.disc.2021.112367}
}

@misc{deZeeuw,
  author     = {de Zeeuw, F.},
  shorthand  = {dZ18},
  title      = {Ordinary lines in space},
  year       = {2018},
  eprint     = {1803.09524},
  eprinttype = {arxiv}
}

@article{DSW,
  author  = {Dvir, Z. and Saraf, S. and Wigderson, A.},
  title   = {Improved rank bounds for design matrices and a new proof of {Kelly's} theorem},
  journal = {Forum Math. Sigma},
  volume  = {2},
  pages   = {e4, 24 pp.},
  year    = {2014},
  doi     = {10.1017/fms.2014.2}
}

@article{ErdosRado,
  author  = {Erd{\H{o}}s, P. and Rado, R.},
  title   = {Intersection theorems for systems of sets},
  journal = {J. London Math. Soc.},
  volume  = {35},
  pages   = {85--90},
  year    = {1960},
  doi     = {10.1112/jlms/s1-35.1.85}
}

@article{Frederickson,
  author  = {Frederickson, G. N.},
  title   = {Fast algorithms for shortest paths in planar graphs, with applications},
  journal = {SIAM J. Comput.},
  volume  = {16},
  number  = {6},
  pages   = {1004--1022},
  year    = {1987},
  doi     = {10.1137/0216064}
}

@article{Mader,
  author  = {Mader, W.},
  title   = {Homomorphie\-eigenschaften und mittlere Kanten\-dichte von Gra\-phen},
  journal = {Math. Ann.},
  volume  = {174},
  pages   = {265--268},
  year    = {1967},
  doi     = {10.1007/BF01364272}
}

@article{NesetrilOssonaExpansion,
  author  = {Ne\textcaron{s}et\textcaron{r}il, J. and {Ossona de Mendez}, P.},
  title   = {Grad and classes with bounded expansion {II}. Algorithmic aspects},
  journal = {European J. Combin.},
  volume  = {29},
  number  = {3},
  pages   = {777--791},
  year    = {2008},
  doi     = {10.1016/j.ejc.2006.07.014}
}

@article{ReedWood,
  author  = {Reed, B. A. and Wood, D. R.},
  title   = {A linear-time algorithm to find a separator in a graph excluding a minor},
  journal = {ACM Trans. Algorithms},
  volume  = {5},
  number  = {4},
  pages   = {Article 39, 16 pp.},
  year    = {2009},
  doi     = {10.1145/1597036.1597043}
}

@article{ReidlSanchezStavropoulos,
  author  = {Reidl, F. and {S{\'a}nchez Villaamil}, F. and Stavropoulos, K. S.},
  title   = {Characterising bounded expansion by neighbourhood complexity},
  journal = {European J. Combin.},
  volume  = {75},
  pages   = {152--168},
  year    = {2019},
  doi     = {10.1016/j.ejc.2018.08.001}
}

@article{RonyaiBabaiGanapathy,
  author  = {R{\'o}nyai, L. and Babai, L. and Ganapathy, M. K.},
  title   = {On the number of zero-patterns of a sequence of polynomials},
  journal = {J. Amer. Math. Soc.},
  volume  = {14},
  number  = {3},
  pages   = {717--735},
  year    = {2001},
  doi     = {10.1090/S0894-0347-01-00367-8}
}

@article{StreinuTheran2010,
  author  = {Streinu, I. and Theran, L.},
  title   = {Slider-pinning rigidity: a {Maxwell--Laman}-type theorem},
  journal = {Discrete Comput. Geom.},
  volume  = {44},
  number  = {4},
  pages   = {812--837},
  year    = {2010},
  doi     = {10.1007/s00454-010-9283-y}
}

@article{Whiteley1988,
  author  = {Whiteley, W.},
  title   = {The union of matroids and the rigidity of frameworks},
  journal = {SIAM J. Discrete Math.},
  volume  = {1},
  number  = {2},
  pages   = {237--255},
  year    = {1988},
  doi     = {10.1137/0401025}
}

@article{Whiteley1989,
  author  = {Whiteley, W.},
  title   = {A matroid on hypergraphs, with applications in scene analysis and geometry},
  journal = {Discrete Comput. Geom.},
  volume  = {4},
  number  = {1},
  pages   = {75--95},
  year    = {1989},
  doi     = {10.1007/BF02187716}
}

@incollection{Whiteley1996,
  author    = {Whiteley, W.},
  title     = {Some matroids from discrete applied geometry},
  booktitle = {Matroid Theory (Seattle, WA, 1995)},
  series    = {Contemp. Math.},
  volume    = {197},
  publisher = {Amer. Math. Soc.},
  address   = {Providence, RI},
  pages     = {171--311},
  year      = {1996},
  doi       = {10.1090/conm/197/02540}
}

@article{Yuster,
  author  = {Yuster, R.},
  title   = {Single source shortest paths in {$H$}-minor free graphs},
  journal = {Theoret. Comput. Sci.},
  volume  = {411},
  number  = {34--36},
  pages   = {3042--3047},
  year    = {2010},
  doi     = {10.1016/j.tcs.2010.04.028}
}

\end{document}